\documentclass{amsart}
\usepackage{comment}
\usepackage{subcaption}
\usepackage[english]{babel}
\usepackage{amssymb}
\usepackage{enumerate, xspace}
\usepackage{dsfont}
\usepackage{amsfonts}
\usepackage{xcolor}
\usepackage{graphicx}
\graphicspath{ {./} }
\usepackage{amsmath}
\usepackage[normalem]{ulem}
\usepackage{graphicx}
\usepackage[T1]{fontenc}
\usepackage[utf8]{inputenc} 
\usepackage{bbm}
\usepackage[colorlinks=true,linkcolor=violet,citecolor=blue,urlcolor=blue]{hyperref}

\newtheorem{theorem}{Theorem}

\newtheorem{mainthm}{Theorem}

\newtheorem{corollary}[theorem]{Corollary}

\newtheorem{defin}[theorem]{Definition}

\newtheorem{remark}[theorem]{Remark}

\newtheorem{lemma}{Lemma}[section]
\newtheorem{proposition}{Proposition}[section]

\newcommand\cA{{\mathcal A}}
\newcommand\cB{{\mathcal B}}
\newcommand\cC{{\mathcal C}}

\newcommand\cF{{\mathcal F}}

\newcommand\cH{{\mathcal H}}
\newcommand\cI{{\mathcal I}}
\newcommand\cJ{{\mathcal J}}

\newcommand\cL{{\mathcal L}}

\newcommand\cT{{\mathcal T}}
\newcommand\cU{{\mathcal U}}

\newcommand\bC{{\mathbb C}}

\newcommand\bN{{\mathbb N}}

\newcommand\bP{{\mathbb P}}

\newcommand\bR{{\mathbb R}}
\newcommand\bT{{\mathbb T}}

\newcommand\bZ{{\mathbb Z}}

\newcommand\fB{{\mathfrak B}}

\newcommand\ve{\varepsilon}
\newcommand\eps{\ve}

\newcommand{\norm}[1]{\left\Vert#1\right\Vert}

\begin{document}
\title{Annealed Ruelle-Pollicott Resonances}

\author{Sakshi Jain$^{1}$}
\author{Maxence Phalempin$^{2}$}

\maketitle

\begingroup
\renewcommand{\thefootnote}{}
\footnotetext{
\begin{itemize}

\item[$^{1}$] School of Mathematics and Physics, University of Queensland, St Lucia, QLD 4072, Australia. \texttt{sakshi.jain@uq.edu.au}
\item[$^{2}$] School of Mathematics and Statistics, University of New South Wales Sydney, Sydney, NSW 2052, Australia. \texttt{m.phalempin@unsw.edu.au}
\end{itemize}
}
\endgroup
\begin{abstract}
   We adapt the theory of Ruelle–Pollicott resonances to annealed random dynamical systems generated by independent and identically distributed families of maps. Introducing annealed transfer and Koopman operators, we define resonances as elements of the point spectrum of the associated operators and establish a decorrelation formula relating these resonances to the asymptotic decay of annealed correlations. We then study several classes of systems for which the theory can be made explicit. First, we consider a family of piecewise expanding Markov maps of the interval, we construct Banach spaces adapted to the dynamics and obtain a complete description of the annealed resonance spectrum. Then we investigate an inverse spectral problem, proving realisability results for prescribed collections of complex numbers as resonances of suitably constructed annealed dynamical systems. 

\end{abstract}
\section{Introduction}
One of the central aims of ergodic theory and statistical mechanics is to quantify how deterministic or random dynamical systems lose memory of their initial states. 

Consider a map \(T\) on a  measure space \(X\) (with added structure as required), preserving a probability measure \(\mu\). Consider two bounded functions \(f\) and \(g\) on \(X\). A standard feature that encapsulates a lot of information on its probabilistic behaviour is the correlation function between observables, given by 
 \[
 \int f\cdot g\circ T^n - \left(\int f\ d\mu\right)\cdot \left(\int g\ d\mu\right), 
 \]
 which encodes fundamental statistical properties like the decay of this correlation function with \(n\) corresponds to mixing and statistical stabilization. The rate of this decay provides information about the speed of mixing, for example, exponential decay implies strong mixing and rapid statistical stabilization, while slower, polynomial rates signal long-range memory effects. Such correlation asymptotics play a decisive role in limit theorems, linear response, and stability analysis with a well developed general theory (see, for example, \cite{Liverani95,Dolgopyat98,KH96,Keller98,Baladi2000,Baladi16}).
 
Beyond mere rates, it is sometimes possible to obtain the next few terms in their asymptotic expansion, in terms of the Ruelle–Pollicott spectrum (or Ruelle–Pollicott resonances) of the map. They were first introduced in \cite{Ruelle86}, and were rigorously studied for the first time in \cite{Pollicott86} for hyperbolic dynamical systems. The literature has been flourishing ever since. Since these foundational contributions, a comprehensive functional-analytic framework has been developed to rigorously define and study resonances via transfer operators acting on suitably adapted Banach spaces. In particular, the works of Baladi \cite{Baladi2000,Baladi16}, Liverani \cite{Liverani95} and with Gouëzel \cite{GL2006}, have established the existence of discrete spectra for large classes of uniformly hyperbolic systems and clarified their relation to decay of correlations and statistical limit laws.

Resonances play a central role in quantifying the statistical properties of chaotic systems. They govern decay of correlations and thereby mixing rates, provide error terms in statistical limit laws, and they appear in linear response theory, linking them to sensitivity of invariant measures on initial conditions. Beyond pure dynamics, resonances have deep connections to scattering theory in quantum chaos and to non-equilibrium statistical mechanics, where they describe relaxation to equilibrium. Numerically, approximations of resonances have found applications in fluid mixing, climate dynamics, and molecular simulations (see \cite{CNKMG14, GL17}). This broad scope underlines the importance of locating resonances in new settings, such as annealed random systems.

There are various ways of defining resonances but we will go by the following definition taken from \cite{FGL19} (replacing the space by \(\cC^\infty\), which in the original version is \(\cC_b\), the space of bounded functions).

\begin{defin}\cite{FGL19}
    Let \(T\) be a map on a space \(X\), preserving a probability measure \(\mu\).
Consider the space of functions \(\cC^\infty\) on \(X\). Let \(I\) be a finite or countable set, let \(\Lambda = (\lambda_i)_{i\in I}\) be a set of complex numbers with \(|\lambda_i|\in (0,1]\) such that for any \(\ve>0\) there are only finitely many \(i\) with \(|\lambda_i|\geq\ve\), and let \((N_i)_{i\in I}\) be nonnegative integers. We say that \(T\) has a full Ruelle-Pollicott spectrum \((\lambda_i)_{i\in I}\) with Jordan blocks dimension \((N_i)_{i\in I}\) on the space of functions \(\cC^\infty\) if, for any \(f,g \in \cC^\infty\) and for any \(\ve > 0\), there is an asymptotic
expansion
\[
\int f \cdot g\circ T^n\ d\mu = \sum\limits_{|\lambda_i|\geq\ve}\sum\limits_{j\leq N_i}\lambda_i^n n^j c_{i,j}(f,g) + o(\ve^n),
\]
where \(c_{i,j}(f,g)\) are bilinear functions of \(f\) and \(g\), that we suppose finite rank but non-zero.
\end{defin}
In other words, there is an asymptotic expansion for the correlations of functions
in \(\cC^\infty\), up to an arbitrarily small exponential error. With this definition, it is clear that the Ruelle-Pollicott spectrum is an intrinsic object, only depending on \(T, \mu\) and the space of functions \(\cC^\infty\). 

As mentioned above, these resonances can also be studied via transfer operator theory, namely these are also the complex numbers associated with the spectrum of the transfer operator acting on suitable Banach or Hilbert spaces. They appear as poles of the Laplace transform of the correlation functions, for more details refer to Definition~\ref{def:main}. Using the transfer operator techniques, the existence of resonances has been studied extensively in the last few decades for various systems including expanding, piecewise expanding \cite{Baladi2000,BKL22}, hyperbolic systems \cite{Liverani95, GL2006, Baladi16}, pseudo-Anosov systems \cite{FGL19}, real analytic uniformly hyperbolic systems (expanding circle maps and uniformly hyperbolic maps on \(\bT^2\)) in \cite{FN2006} etc.  More recently, a pseudospectral approach was introduced in \cite{BNT2025}, for the rigorous, computer-assisted estimation of resonances, providing regions where resonances must exist and precluding the presence of resonances elsewhere. Their approach enables to computationally enclose the resonances of concerned system within small discs. We would also like to point out the fact that it is not always possible to obtain resonances to arbitrary precision for example \cite{BCJ23, BCC25} proves that the essential spectrum for non-Markov maps is persistent causing an obstruction in locating the entire point spectrum.

Many systems of current interest, however, are not purely deterministic but involve randomness. Random dynamical systems arise naturally in contexts such as stochastic perturbations of deterministic models, physical systems driven by external noise, and dynamics in random media. Two different frameworks are typically distinguished: the quenched setting, where one fixes a random realization and studies the resulting system, and the annealed setting, where one averages over randomness at each iteration, producing an averaged transfer operator. While the quenched framework is conceptually closer to individual realizations, the annealed framework has the advantage of tractability and is more directly connected to physically measurable averaged quantities.

In i.i.d.\ random systems, correlation functions can still be defined in terms of the averaged evolution, and they remain key objects of study. Yet, in contrast to deterministic systems, the spectral theory of annealed transfer operators remains comparatively undeveloped. In particular, the precise location of resonances for these operators has not been systematically explored. Understanding these resonances is crucial: they govern the asymptotic behavior of annealed correlations, describe how randomness modifies or suppresses memory, and can reveal robustness or fragility of statistical properties under randomization.

The goal of the present work is to contribute to this spectral theory. We develop a framework for locating Ruelle–Pollicott resonances for annealed operator, thereby linking decay of correlations to explicit spectral data. We illustrate the general results with concrete examples, showing how resonances can be computed, and how their structure compares with the deterministic case.

We also consider an inverse problem, which refers to asking the question: given a collection of complex numbers in unit disc, does there exist a family of dynamical systems realising those as the resonances of the corresponding annealed operator. The answer turns out to be affirmative, we consider a family of hyperbolic Blaschke products on \(\bT^2\) such that the annealed operator realises the average of the complex numbers (each of which is  a resonance of one of the Blaschke products in the family)  as its resonance. \\
It is however not true in general that the resonances of annealed operator are the average of resonances of individual systems. We support this claim with Example~\ref{sec:eg}.

Taken together, our results establish that the spectral viewpoint extends fruitfully to annealed operator, offering new insight into how randomization interacts with the statistical properties of dynamics. Some future questions that the authors will be interested to address are the interplay with quenched resonances, universality of spectral features, and extensions to continuous-time random flows.

\subsection*{Overview and outlook}
The rest of the article is organised as follows: In Section~\ref{sec:results}, we recall the definition of Ruelle-Pollicott resonances for deterministic systems. We then describe the random system in the i.i.d.\ case with the annealed transfer and Koopman operator, asserting the well-definedness of the same, followed by the definition of decorrelation formula for the annealed operator, finally stating our main results, namely Theorem~\ref{thm:inverse} and Theorem~\ref{mainthm:B} along with Proposition~\ref{prop:gencomp} and Proposition~\ref{propgenquasi}. In Section~\ref{sec:piecexpand}, we describe a class of piecewise expanding Markov systems on the unit interval, where the key is to find a family of Banach spaces which is well adapted to any of these Markov maps. We give a full description of Ruelle-Pollicott resonances for the i.i.d.\ annealed operator associated to such a family. Section~\ref{secblaschke} consists of two parts. First we study an inverse problem, that is, whether a chosen family of complex numbers in the open unit disc (or their average) can be realised as the resonances of an annealed operator. Second part is an example of stochastic differential equation which realises the associated transfer operator as an annealed operator and we numerically find the resonances. Finally, we have added an appendix which includes the proofs of various lemmata and propositions used within the article.

\section{General framework and results}\label{sec:results}

\subsection{Ruelle-Pollicott resonances in deterministic dynamical systems.}\label{subsecdeter} 

For a dynamical system $(X,T,m)$ where $X$ stands for a metric space, $m$ a reference measure, and $T:X\to X$ a (non-singular) transformation. The Koopman operator on $L^1(m)$ is classically defined as $U f:= f\circ T$ and the transfer operator $L$ as the adjoint of the Koopman operator $U$ : for $f\in L^1(m)$ and $g\in L^\infty(m)$, 

\begin{align}\label{eqoptrans0}
    \int L f gdm=\int  f g\circ T dm.
\end{align}

Given a Banach space $\cB \subset L^1(m)$, one can define the \textbf{Ruelle-Pollicott resonances} of the dynamical system $(X,T,m)$ over $\cB$ as the generalized eigenspace corresponding to the isolated eigenvalues.\\
 
Whenever the essential spectrum $\sigma_{ess}(L)$ of $L$ on a non-trivial dense space $\cC \subset L^1(m)$ is reduced to $\{0\}$ we say that $L$ has \textbf{full Ruelle-Pollicott resonances} on $\cB$. This notion of \textbf{full Ruelle-Pollicott resonances} can be interpreted in terms of decorrelation (see Lemma \ref{lem:equivalence} for a link between the decay of decorrelation and the spectrum of the transfer operator $L$) as the following \textbf{decorrelation property}.

\begin{defin}[Deterministic decorrelation property]
    Let $L\in L(\cB)$ be an operator, it is said to satisfy \textbf{decorrelation property} if there are $M\in\bN$, \(\{\lambda_i\}_{i=1}^M, \lambda_i\in\bC\), an associated \(m_i\in \bN\), such that for any $i\leq M$ and any $k\leq m_i$, there are bilinear maps $C'_{i,k} : \cB\times \cB\mapsto \mathbb{C}$ such that

\begin{align}\label{eq:decoform}
    \int_I L^n\phi \varphi dm =\ \sum\limits_{i=1}^M\sum\limits_{k=0}^{m_i-1}\lambda_i^n n^k C'_{i,k}(\phi,\varphi) + o(\ve^n).
\end{align}
\end{defin}
In the literature, there is extensive study of the deterministic dynamical systems that realise full Ruelle-Pollicott resonances for example uniformly expanding maps \cite{Baladi2000}, uniformly hyperbolic maps \cite{GL2006, Baladi16}, Axiom A systems \cite{Ruelle86}, hyperbolic Blaschke products \cite{SBJ17} and more.

It is the latter definition that we will adapt to the families of i.i.d.\ random dynamical systems that we present in the next subsection. 

\begin{remark}
\begin{itemize}
    \item  Whenever the dynamical system $(X,T,m)$ is exponentially mixing with an invariant measure $\mu \ll m$,\footnote{\(\mu\ll m\) is a notation meaning that \(\mu\) is absolutely continuous wrt \(m\).} the leading eigenvalue $\lambda_1$ of $L$ is $1$, the multiplicity is $m_1=1$ and the corresponding bilinear form is given by  $C'_{1,0}(\phi,\varphi)=\int \phi d\mu \int \varphi dm$.
\item Notice that the Ruelle-Pollicott spectrum given by the terms $(\lambda_i)_{i\in\bN}$ may vary with the choice of the space $\cC$. A good example of that is the case of a Markov piecewise linear map of the interval $I=[0,1]$. When the observables are periodic functions in $C^\infty(I)\simeq C^\infty(\mathbb S^1)$, the Ruelle-Pollicott spectrum is reduced to $\{0,1\}$ (see \cite{Baladi2000}), but on $C^\infty([0,1])$, the Ruelle-Pollicott spectrum gets more interesting (see \cite{BKL22} ).
\end{itemize}
   \end{remark}

\subsection{Resonances in i.i.d.\ random dynamical systems.}\label{subseciidintro}

Let $(X,\cA,m)$ be a metric measure space (\(\cA\) is the Borel sigma-algebra on \(X\)) and $(\Omega,\cF,\mathbb P)$ a probability space. Let $(T_\omega)_{\omega\in\Omega}$ be a measurable family of measurable maps $T_\omega:X\to X$, meaning that the map
\[
(\omega,x)\mapsto T_\omega(x)
\]
from $\Omega\times X$ to $X$ is $(\mathcal F\otimes\mathcal \cA,\mathcal \cA)$-measurable.

As defined in previous subsection, for each $\omega\in\Omega$, let $U_\omega$ denote the Koopman operator
\[
U_\omega f := f\circ T_\omega ,
\]
and let $L_\omega$ be the corresponding transfer operator, defined by the duality relation
\[
\int_X L_\omega f\, g\, dm
=
\int_X f\, (g\circ T_\omega)\, dm,
\qquad
f\in L^1(m), \quad g\in L^\infty(m).
\]
Formally the annealed transfer operator $\cL$ on $L^1(m)$ corresponds to the following integral
\begin{equation}\label{eqoptrans}
\mathcal L f
:=
\int_\Omega L_\omega f\, d\mathbb P(\omega),
\end{equation}
whenever it is defined. 

Likewise, the annealed Koopman operator is defined by
\begin{equation}\label{eqopkoop}
\mathcal U f
:=
\int_\Omega U_\omega f\, d\mathbb P(\omega),
\end{equation}
whenever the integral is well defined.

 To introduce those two quantities a bit more rigorously on the Banach spaces we are interested in, let assume that $\cB$ is a reflexive Banach space (see Proposition \ref{prop:integomeg} in Section \ref{sec:classic} for $\cB$ a general Banach space\footnote{In order to define the annealed transfer operator as in \eqref{eqoptrans} it is enough to pick $\cB:=L^\infty(m)$ as soon as $U_\omega\in L^1(m)$ for all $\omega\in \Omega$. However if one wants to use quasi-compactness properties, then the notion of annealed transfer operator has to fit to stronger Banach spaces.}) and that the map $\omega \mapsto L_\omega\in L(\cB)$ (resp. $\omega \mapsto U_\omega\in L(\cB)$) is measurable (with respect to the Borel \(\sigma-\)algebra of $(L(\cB),\norm{\cdot}_{L(\cB)})$) and satisfies
\begin{align*}
    \int \norm{L_\omega }_{L(\cB)}d\bP<\infty.
\end{align*}
Then one can use the Bochner integral : for any measurable $A\in \mathcal{F}$, we can define for $f\in \cB$, $\int_{A} L_\omega fd\bP(\omega)$ as the unique vector in $\cB$ such that for any $g\in \cB^*$ (the dual of \(\cB\));

\[
\left\langle \int_A L_\omega fd\bP(\omega),g \right\rangle_{\cB,\cB^*} =\int_A \langle  L_\omega f,g \rangle_{\cB,\cB^*} d\bP(\omega).
\]

One can easily check that $g \mapsto \int_A \langle  L_\omega f,g \rangle d\bP(\omega)$ is a linear form on $\cB^*$ bounded from above by $\int\norm{L_\omega}_{L(\cB)}\norm{f}_{\cB}d\bP(\omega)$.
Since $\cB$ is reflexive, it defines an operator $(f\mapsto \int_A L_\omega fd\bP)\in L(\cB)$ .  
Notice that as a result we also have 

\begin{align}
\norm{\int_A L_\omega d\bP}\leq \int_A \norm {L_\omega} d\bP.    
\end{align}

Now consider the following i.i.d.\ Probability space $(\Omega^{\bN_0},\cF^{\otimes \bN_0},\bP^{\otimes \bN_0})$, we introduce $\tau : (\omega_n)_{n\in \bN_0} \in \Omega^{\bN_0} \mapsto (\omega_{n+1})_{n\in \bN_0}\in \Omega^{\bN_0}$ the probability preserving transformation, and we associate to this probability space the random family $(T_{\bar\omega})_{\bar\omega\in \Omega^\bN_0}$ given by $T_{\bar\omega}:=T_{\omega}$ for any $\bar \omega \in \Omega^{\bN_0}$ such that $\bar\omega_0=\omega$. Such random dynamical system $(T_{\bar \omega})_{\bar \omega\in \Omega^{\bN_0}}$ on $(\Omega^{\bN_0},\cF^{\otimes \bN_0},\bP^{\otimes \bN_0})$ can be identified to a dynamical skew product $(\Omega^\bN_0\times X,F)$ where $F: \Omega^\bN_0\times X\mapsto \Omega^\bN_0\times X$ is given by
\begin{align*}
    F(\bar \omega,x):=(\tau(\bar \omega),T_{\bar \omega}x).
\end{align*}

Then on the space of observables $\phi : \Omega \times X\mapsto \mathbb{C}$ independent of the coordinate in $\Omega^{\otimes \bN_0}$, the transfer operator associated to $F$ with respect to  the measure $m\otimes \bP^{\otimes \bN_0}$ is actually 
\begin{align}
    \cL f&:=\int_\Omega L_{\omega} f d\bP(\omega).
\end{align}
Indeed for $\phi \in L^\infty(\Omega^{\otimes \bN_0}\times X)$ and  $\psi\in L^1(\Omega^{\otimes \bN_0}\times X)$ both independent of the coordinate in $\Omega^{\otimes \bN_0}$, that is, they can be written as $\phi(\bar \omega,x):=\bar\phi(x)$ and $\psi(\bar \omega,x):=\bar\psi(x)$ for any $\bar \omega\in \Omega^{\otimes \bN_0}$,
\begin{align*}
    \int_X\int_{\Omega^{\otimes \bN_0}} \phi(\bar \omega,x) \psi\circ F(\bar \omega,x)d\bP^{\otimes \bN_0}(\bar \omega)dm(x)&=\int_X\int_{\Omega^{\otimes \bN_0}}\int_\Omega \bar\phi(x) \psi(\tau(\bar \omega),T_{\omega_0}x)d\bP(\omega_0)d\bP^{\otimes \bN_0}(\tau \bar \omega)dm(x)\\
    &=\int_{\Omega^{\otimes \bN_0}}\int_X \int_\Omega L_{\omega_0}\bar \phi(x) \bar\psi(x)d\bP(\omega_0)dm(x)d\bP^{\otimes \bN_0}(\tau \bar \omega)\\
    &=\int_{\Omega^{\otimes \bN_0}}\int_X \cL\phi(\bar \omega,x) \psi(\bar \omega,x)dm(x)d\bP^{\otimes \bN_0}(\bar \omega).
\end{align*}

Thus the notion of Ruelle-Pollicott resonances introduced in Subsection~\ref{subsecdeter} for deterministic dynamical systems raises naturally to random dynamical systems in i.i.d.\ settings through the deterministic representation by a skew product and can be interpreted as the eigenspace associated to the isolated eigenvalues of $\cL$.
The following definition, that summarizes the notion of \textbf{full Ruelle-Pollicott resonances} for i.i.d.\ systems presented above, will be the focus of the paper and the ensuing Theorems.

The goal of this paper and our main results is to describe and locate the resonances for the random family of maps $(T_\omega)_{\omega\in \Omega}$ whose annealed operator is given by $\cL$.
\begin{defin}[Annealed decorrelation property]\label{def:main}
   We say that the annealed random dynamical system $(T_\omega)_{\omega\in \Omega}$ on $(\Omega,\cF,\bP)$ yields a \textbf{full Ruelle-Pollicott resonances} (also called \textbf{decorrelation formula}) for the measure $m$ and the dense space $\cC\subset L^1(m)$ if the following occurs. There exists a set of complex numbers \(\Xi_1 = \{\lambda_1, \lambda_2,\ldots\}\) and, for each \(\lambda_i\in\Xi\), an associated \(m_i\in\bN\) such that, for any \(\phi,\varphi\in \cC^\infty(\cI)\) and \(\ve>0\), there is an asymptotic expansion
\[
\int_I\phi\cdot \cU^n \varphi dm =\int_I\cL^n\phi\cdot  \varphi dm=\ \sum\limits_{\lambda_i\in\Xi:|\lambda_i|\geq \ve}\sum\limits_{k=0}^{m_i-1}\lambda_i^n n^k C'_{i,k}(\phi,\varphi) + o(\ve)
\]
where \(C'_{i,k}(\phi,\varphi)\) are finite rank and non-zero bilinear functions of \(\phi\) and \(\varphi\).
\end{defin}

The Ruelle-Pollicott resonances enjoy properties analogous to those arising in deterministic dynamical systems. For example, suppose that there exists an ergodic stationary probability measure \(\mu\ll m\), that is, \(\mu\otimes\mathbb P\) is invariant and ergodic under the skew-product transformation \(F\). If, moreover, \(\mu\) admits a density belonging to \(\cB\) and the annealed dynamics is mixing with respect to \(\mu\), then the leading eigenvalue of \(\cL\) is 1, it is simple, and the corresponding contribution to the decorrelation formula is given by
\[
C'_{1,0}(\phi,\varphi)
=\Bigl(\int \phi d\mu\Bigr)
\Bigl(\int \varphi dm\Bigr).
\]
which, thus, is also the leading term in the asymptotic expansion while the remaining resonances describe the decay of correlations towards this limit.

The main objective of this paper is to bridge the spectral properties of the annealed transfer operator \(\cL\) to those of the fibre operators \((L_\omega)_{\omega\in\Omega}\). In the deterministic setting, two important classes have been extensively studied: compact transfer operators \cite{Pollicott86,SBJ17} and quasi-compact transfer operators \cite{Baladi2000,GL2006,BKL22}. We develop analogous frameworks for random families of maps and investigate how the corresponding Ruelle-Pollicott spectrum are determined by the spectral properties of the fibres.

We begin with the case in which the family \((T_\omega)\) is such that the associated transfer operator \(L_\omega\in L(\mathcal B)\) is compact for every \(\omega\in\Omega\).

\begin{proposition}\label{prop:gencomp}
Assume that there is a Banach space $\cB$ such that
$\omega\mapsto L_\omega\in L(\cB)$ is a measurable family of compact operators. If $\int \norm{L_\omega}_{L(\cB)}d\bP(\omega)<\infty$, then the annealed operator 
$$
 \cL:=\int L_\omega(\cdot)d\bP(\omega)
 $$
 is a compact operator on $\cB$ and thus satisfies the \textbf{decorrelation property}.
\end{proposition}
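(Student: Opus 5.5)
The plan is to show that $\cL$ is a norm limit of compact operators. Since the compact operators $K(\cB)$ form a closed subspace of $L(\cB)$, this gives compactness of $\cL$. The decorrelation property will then follow from the Riesz--Schauder spectral theory of compact operators, via Lemma \ref{lem:equivalence}.

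\textbf{Step 1: approximation by simple functions.} Since $\omega\mapsto L_\omega$ is measurable and Bochner integrable with values in $L(\cB)$, I will use that its range is essentially separable. This is the step I expect to need the most care: if the paper's measurability only means Borel measurability for the norm topology, separability of the range is an extra requirement. I would impose it, or note that it holds because the range lies in $K(\cB)$ together with a separability assumption. Given separability, there exist simple functions
\[
S_N(\omega)=\sum_{j=1}^{k_N} \mathbbm 1_{A_{j,N}}(\omega)\, K_{j,N}
\]
with $\int \norm{L_\omega - S_N(\omega)}_{L(\cB)}\,d\bP\to 0$. The $K_{j,N}$ can be chosen among values $L_{\omega_j}$, hence compact. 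The standard construction takes a countable dense subset $\{L_{\omega_j}\}$ of the range and assigns to $\omega$ the nearest of the first $N$ elements, truncated to control the $L^1$ norm; dominated convergence then applies, using $\int\norm{L_\omega}\,d\bP<\infty$.

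\textbf{Step 2: compactness of $\cL$.} Each $\int S_N\,d\bP=\sum_j \bP(A_{j,N})K_{j,N}$ is a finite linear combination of compact operators, so it is compact. Using the inequality $\norm{\int L_\omega\, d\bP}\leq\int\norm{L_\omega}\,d\bP$ established above, which also holds for the Bochner integral in $L(\cB)$, we get
\[
\Bigl\|\cL-\int S_N\,d\bP\Bigr\|\le \int \norm{L_\omega-S_N(\omega)}\,d\bP\to 0 .
\]
Hence $\cL$ is compact. One should also check that the Bochner integral in $L(\cB)$ agrees with the pointwise definition $f\mapsto\int L_\omega f\,d\bP$. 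This holds because evaluation at $f$ is a bounded linear map $L(\cB)\to\cB$, and bounded linear maps commute with Bochner integrals.

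\textbf{Step 3: decorrelation.} By Riesz--Schauder theory, $\sigma(\cL)\setminus\{0\}$ consists of isolated eigenvalues $\lambda_i$ with finite-rank spectral projectors $\Pi_i$, and these can accumulate only at $0$. For $\ve>0$, the finitely many $\lambda_i$ with $|\lambda_i|\ge\ve$ give a decomposition
\[
\cL^n=\sum_i(\lambda_i\Pi_i+N_i)^n+R^n,
\]
where the $N_i$ are nilpotent of order $m_i$ and the remainder satisfies $r(R)<\ve$. Expanding $(\lambda_i\Pi_i+N_i)^n$ binomially yields terms $\lambda_i^n n^k$ with finite-rank coefficients. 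Pairing with $\varphi$ (integration against $\varphi\in L^\infty\subset\cB^*$, assuming this pairing is continuous on $\cB$) defines the bilinear forms $C'_{i,k}$. By Gelfand's formula $\norm{R^n}=o(\ve^n)$, which is the decorrelation property.
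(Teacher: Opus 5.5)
Your proposal is correct and follows essentially the same route as the paper. The paper also approximates $\cL$ in operator norm by finite combinations $\sum_i \mu(F_i) L_i$ of values $L_i=L_{\omega_i}$, where $\mu=Y_*\bP$, a compact set $K_\ve$ of almost full $\mu$-mass is covered by small balls $F_i$, and the piece over $Y^{-1}(K_\ve^c)$ has norm at most $\ve$; it then concludes by closedness of the compact operators and Lemma~\ref{lem:equivalence}. Your Bochner simple-function version of this approximation is equivalent, and the separability caveat you flag is exactly what the paper's unstated tightness assumption on $Y_*\bP$ requires.
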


\begin{proof}
   The proof is done in appendix, see Section \ref{sec:classic}.
\end{proof}

Compact transfer operators arise naturally in several classes of random dynamical systems. We study two different cases of compact annealed operators namely the one associated to stochastic differential equations on a \(d\)-dimensional torus see Section~\ref{sec:sde} and for an annealed operator associated to a family of hyperbolic Blaschke products defined on the 2-torus \(\bT^2\), see Section~\ref{secblaschke}.

In the deterministic case of Blaschke products, one can accurately locate the Ruelle-Pollicott resonances for this type of families \cite{SBJ17}. Blaschke products are an interesting family because they are analytic hyperbolic diffeomorphisms of the torus, and a special case where the resonances of the annealed operator are actually the average of the resonances of the deterministic system. This is the case because the general eigen space for all the operators turns out to be the same.

\begin{mainthm}\label{thm:inverse}
Given \(\omega\in\bC\) with \(|\omega|<1\), any family of Blaschke products 
        $T_\omega :\bT^2\mapsto \bT^2$ defined by
\[
    T_{\omega}(z_1,z_2)=\left(z_1\frac{z_1-\omega}{1-\bar{\omega}z_1}z_2,\frac{z_1-\omega}{1-\bar{\omega}z_1}z_2\right)
\]

such that the associated annealed Koopman operator (whenever well defined)
\[
\cU:=\int U_\omega d\bP(\omega),
\]
is compact. Furthermore its spectrum is given by
 \[
 \sigma(\cU) = \left\{\int_\Omega (-\omega)^n : n\in\bN\right\} \cup \left\{\int_\Omega(-\bar\omega)^n : n\in\bN\right\}\cup \{0,1\},
 \]
 with the multiplicity of $\lambda\in \sigma(\cU)\backslash\{0,1\}$ given by $m_\lambda:=\#\{n, \int_\Omega (-\omega)^n=\lambda\}+\#\{ n, \int_\Omega(-\bar\omega)^n=\lambda\}$ and\footnote{ \(\#A\) denotes the cardinality of the set \(A\).} $1$ being a simple eigenvalue.
\end{mainthm}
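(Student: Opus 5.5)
The plan is to follow the strategy of \cite{SBJ17} for a single hyperbolic Blaschke product and check that every ingredient is uniform in $\omega$. The one point that can fail is the second step: the triangular structure has to come from an $\omega$-independent filtration.

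\textbf{Step 1 (function space and compactness).} Take $\cB=H$, a Hilbert (Hardy-type) space of functions holomorphic on a poly-annulus $A_\rho\times A_\rho\supset \bT^2$, with orthogonal basis $e_{k,l}(z)=z_1^kz_2^l$, $(k,l)\in\bZ^2$.
\begin{itemize}
\item The map $b_\omega(z)=(z-\omega)/(1-\bar\omega z)$ is holomorphic near $\bT$ and maps $\bT$ to itself.
\item As in \cite{SBJ17}, $T_\omega$ is a hyperbolic holomorphic map, so $U_\omega$ factors as an embedding $H\hookrightarrow H'$ into a space over a strictly larger poly-annulus, composed with a bounded composition operator.
\item The embedding is compact, indeed nuclear, since it is diagonal in the monomials with geometrically decaying entries.
\item The constants depend continuously on $\omega$. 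So either $\bP$ is supported in $\{|\omega|\le r\}$ for some $r<1$, or one imposes $\int\norm{U_\omega}_{L(H)}\,d\bP<\infty$; this is the sense of ``whenever well defined''.
\end{itemize}
Measurability of $\omega\mapsto U_\omega$ holds because $\omega\mapsto U_\omega$ is analytic in $\omega$ and $\bar\omega$. Then the Koopman analogue of Proposition~\ref{prop:gencomp} (same proof) shows that $\cU$ is compact, and in fact nuclear.

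\textbf{Step 2 (common triangular structure).} Next I will exhibit an $\omega$-independent filtration of $H$ that is invariant under every $U_\omega$. Equivalently, I will find an $\omega$-independent ordering of a basis in which each $U_\omega$ is (block) upper triangular.
\begin{itemize}
\item The diagonal blocks should carry the eigenvalues: $1$ on constants, $(-\omega)^n$ in the ``holomorphic'' direction, and $(-\bar\omega)^n$ in the ``antiholomorphic'' direction.
\item The mechanism is that $U_\omega e_{k,l}=z_1^k\,b_\omega(z_1)^{k+l}z_2^{k+l}$.
\item Expanding $b_\omega(z_1)^{m}$ in Laurent series, the lowest-order coefficient is $(-\omega)^m$ in positive powers and $(-\bar\omega)^{m}$ in the conjugate direction.
\item I will choose the ordering of exponents by degree and sign, using the expansion/contraction cones of the linear part $\begin{pmatrix}1&1\\0&1\end{pmatrix}$-type action on exponents together with the Laurent degree in $z_1$.
\item With this ordering, the off-diagonal contributions always move strictly forward, and this is a purely combinatorial statement independent of $\omega$.
\end{itemize}
This step is the main obstacle. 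The \cite{SBJ17} argument for a fixed $\omega$ may use an $\omega$-dependent change of variables, so I would first try to redo the computation directly on monomials to confirm that the filtration is indeed $\omega$-free.

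\textbf{Step 3 (spectrum of the average).} Given Step 2, the filtration is invariant under $\cU=\int U_\omega\,d\bP$, and the diagonal entries of $\cU$ are $\int(-\omega)^n\,d\bP$ and $\int(-\bar\omega)^n\,d\bP$. To avoid delicate infinite-triangular spectral arguments, I will instead use traces.
\begin{itemize}
\item Since $\cU$ is nuclear (Step 1), Lidskii's theorem applies: $\operatorname{tr}(\cU^N)=\sum_\lambda m_\lambda\lambda^N$ for $N\ge1$.
\item On the other hand, $\operatorname{tr}(\cU^N)=\int \operatorname{tr}\bigl(U_{\omega_N}\cdots U_{\omega_1}\bigr)\,d\bP^{\otimes N}$.
\item Each product is again triangular for the common filtration, with diagonal entries $\prod_j(-\omega_j)^n$ and $\prod_j(-\bar\omega_j)^n$.
\item Integrating gives $\operatorname{tr}(\cU^N)=1+\sum_{n\ge1}\bigl[(\int(-\omega)^n\,d\bP)^N+(\int(-\bar\omega)^n\,d\bP)^N\bigr]$.
\end{itemize}
Equality of these power sums for all $N$ identifies the nonzero spectrum with multiplicities. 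In practice one compares the Fredholm determinants $\det(I-z\cU)$, which are entire functions. This yields exactly $m_\lambda$ as stated, and shows that $1$ is simple. The eigenvalue $1$ is realised by constants, and $0$ lies in the spectrum by compactness in infinite dimension.
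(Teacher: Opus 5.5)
\textbf{The gap is in Step 1, and it breaks the whole argument.} On an isotropic space of functions holomorphic on a poly-annulus $A_\rho\times A_\rho\supset\bT^2$, the Koopman operator of a hyperbolic toral map is not compact. It is not even bounded.

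Already for $\omega=0$ you have $T_0(z_1,z_2)=(z_1^2z_2,z_1z_2)$, so $U_0$ permutes monomials: $z_1^{n_1}z_2^{n_2}\mapsto z_1^{2n_1+n_2}z_2^{n_1+n_2}$. Take $n=(F_j,F_{j-1})$ with $F_j$ the Fibonacci numbers, i.e.\ $n$ along the expanding direction. The $\ell^1$-length of the exponent goes from $F_{j+1}$ to $F_{j+3}$. With weights comparable to $\rho^{|n_1|+|n_2|}$, $\rho>1$, you get $\norm{U_0z^n}/\norm{z^n}\approx\rho^{F_{j+2}}\to\infty$. Passing to a dual space ($\rho<1$) only moves the blow-up to the contracting direction: for $n=(F_{j-1},-F_j)$ the ratio is $\rho^{-F_j}$.

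Geometrically, the complexification of $T_0$ sends $\{|z_1|=|z_2|=\rho\}$ to $\{|z_1|=\rho^3,|z_2|=\rho^2\}$. By hyperbolicity, no poly-annulus around $\bT^2$ is mapped compactly into itself by $T_0$ or by $T_0^{-1}$. So the factorization ``bounded composition operator followed by a compact restriction'', which works for contracting inverse branches of expanding maps, does not exist here. Note also that the compact embedding goes from functions on a larger domain to functions on a smaller one, not the reverse. Without Step 1 you have no compact, let alone nuclear, operator to feed into Proposition~\ref{prop:gencomp} or into Lidskii's theorem, so Step 3 has no foundation.

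\textbf{The missing idea} is the anisotropic Hilbert space $H_a$ of \cite{SBJ17} that the paper uses, with $\norm{z^n}_a^2=e^{-2a|n_u|+2a|n_s|}$. It is of analytic type in one hyperbolic direction and of dual type in the other. On $H_a$ each $U_\omega$ is compact, indeed $\norm{U_\omega e_n}_a\le C_\omega e^{-\delta_\omega|n|}$ (the bound used in Lemma~\ref{lem:finite-approx}). Proposition~\ref{prop:gencomp} then gives compactness of $\cU$.

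Once you work on $H_a$, the rest of your plan is salvageable and differs from the paper only in the spectral step.
\begin{itemize}
\item Your worry in Step 2 is unfounded. The vanishing pattern of the matrix coefficients of $U_\omega$ ($m_2\neq n_1+n_2$; or $m_2>0$ and $m_1<n_1$; or $m_2<0$ and $m_1>n_1$) does not depend on $\omega$. The paper reads this off from Lemmas~2.3 and~3.1 of \cite{SBJ17}, with respect to the reordered basis \eqref{eq:basis}, and no change of variables is involved.
\item For the spectrum, the paper approximates $\cU$ in norm by the triangular truncations $\cU_N$ and invokes spectral approximation. Your trace/Lidskii computation is a legitimate alternative route.
\item However, it needs $\cU$ to be trace class on $H_a$. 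That requires an a priori stronger integrability hypothesis, such as $\int\sum_n\norm{U_\omega e_n}_a\,d\bP(\omega)<\infty$, rather than just $\int\norm{U_\omega}_a\,d\bP<\infty$.
\item It also only identifies algebraic multiplicities, whereas the paper additionally asserts that they coincide with the geometric ones.
\end{itemize}
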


Theorem \ref{thm:inverse} above is given for the Koopman operator but this spectral data also holds for the Transfer operator $\cL$ with respect to the Volume measure $m$ as a reference measure. It follows that such random dynamical system admits \textbf{full Ruelle-Pollicott resonances}. And since $1$ is a simple eigenvalue and the only one on the unit circle, $\cL$ is also mixing with the volume measure $m$ being the stationary measure.\\

Since Ruelle-Pollicott resonances correspond to isolated eigenvalues of the spectrum of the transfer operator, the operators exhibiting non-trivial Ruelle-Pollicott resonances are necessarily quasi-compact. It is therefore natural to consider i.i.d.\ families for which the fiberwise transfer operators $L_\omega$ are quasi-compact.
This setting is more delicate when one aims to establish the existence of \textbf{full Ruelle-Pollicott resonances} for random dynamical systems. Indeed, quasi-compact operators typically possess a non-trivial essential spectrum on any given Banach space. As a consequence, a spectral analysis on a single Banach space is insufficient to capture the full resonance structure.
To overcome this difficulty, we consider the spectrum over a suitable sequence (or lattice) of Banach spaces. This approach allows for a refined spectral decomposition and serves as guidelines summarized in the two statements within the following theorem.

\begin{proposition}\label{propgenquasi}
Let $(W_r)_{r\in \bN}$ be a family of Banach spaces.
\begin{enumerate}
    \item\label{item1:propgenquasi} Let $(L_\omega)_{\omega}$ be a family of transfer operators such that $L_\omega : W_r\mapsto W_r$ is well defined for any $r\in \bN$ and $\omega \mapsto L_\omega$ is measurable in $L(W_r)$. Assume that
\begin{align}\label{eqthmegenquasizero}
    L_\omega=\Pi_{\omega,r}+N_{\omega,r},
\end{align}
With $\Pi_{\omega,r}$ a compact operator and $N_{\omega,r}$ an orthogonal operator such that there is $C>0$ and that for a.e $\omega \in \Omega$, $\norm{ N_{\omega,r}}_{W_r}\leq \ve_r(\omega)< 1$ with $\ve_r(\omega)\underset{r\to \infty}{\to}{0}$.
Let $\eta>0$, then for $r$ large enough, $\cL:=\int L_\omega d\bP(\omega)$ is quasi compact and can be decomposed as an operator on $W_r$ into
\begin{align}\label{eqthmegenquasi}
    \cL=\Pi_{r}+N_{r},
\end{align}
with $\Pi_r$ a compact operator orthogonal to the operator $N_r$ and with the latter satisfying
\begin{align}
    \norm{N_r}_{W_r}\leq \int \ve_r(\omega)d\bP(\omega)<\eta.
\end{align}

\item\label{item2:propgenquasi} Let $\cL :W_1\mapsto W_1$ a continuous operator such that $\cL_{|W_r}:W_r\mapsto W_r$ is continuous and admits a spectral gap with shrinking essential spectrum  : for $r\in \mathbb{N}$,  there is $C_r>0$ and $\ve_r<1$ such that 
\begin{align*}
    \cL_{|W_r}=\Pi_r+N_r,
\end{align*}
with $\Pi_r$ a compact operator orthogonal to the operator $N_r$ and the latter satisfying $\|N_r^n\|\leq C_r \ve_r^n$. Furthermore,
assume that for any $r>0$, $(W_r,\|\cdot\|_r)\subset (W_{r-1},\|\cdot\|_{r-1})$ with continuous injection and $\bigcap_{r>0}W_r$ is dense inside $(W_r,\norm{\cdot}_{r})$ and for any eigenvalue $\lambda_i$ of $\cL$ on $W_r$ with $|\lambda_i|>\ve_r$, the associated general eigenspace\footnote{that is the space $E_r(\lambda):=\oplus_{n\in \mathbb{N}}Ker(\lambda I-\cL_{|W_r})$} $E_r(\lambda_i)$ belongs to $\bigcap_{r\in \bN} W_r$ , then for any $f,g\in \bigcap_{r>0}W_r$
\begin{align}
\int \cL^nf \cdot g d\mu = \sum\limits_{|\lambda_i|\geq\ve}\sum\limits_{j< m_i}\lambda_i^n n^j c_{i,j}(f,g) + o(\ve^n),    
\end{align}
where $m_i$ is the geometric multiplicity of the eigenvalue $\lambda_i$ for $\cL$ on $W_r$ for any $r>0$ such that $\int \ve_r(\omega)d\bP(\omega)\leq \ve$, and $c_{i,j}:\bigcap_{r\in \bN} W_r\times \bigcap_{r\in \bN} W_r\mapsto \bC$ a bilinear map independent of the choice of $r$.
\end{enumerate}

\end{proposition}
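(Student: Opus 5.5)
For item \eqref{item1:propgenquasi} I would simply integrate the fibrewise decomposition \eqref{eqthmegenquasizero}. Set $\Pi_r:=\int \Pi_{\omega,r}\,d\bP(\omega)$ and $N_r:=\int N_{\omega,r}\,d\bP(\omega)$, both Bochner integrals in $L(W_r)$.

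First I need measurability of $\omega\mapsto\Pi_{\omega,r}$ and $\omega\mapsto N_{\omega,r}$. I would obtain it by taking $\Pi_{\omega,r}$ to be the Riesz spectral projector of $L_\omega$ outside the disc of radius $\ve_r(\omega)$, composed with $L_\omega$. This is a contour integral of the resolvent, so it is measurable whenever $L_\omega$ is.

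Integrability comes next. We have $\norm{N_{\omega,r}}\le \ve_r(\omega)<1$, and $\norm{\Pi_{\omega,r}}\le \norm{L_\omega}+1$, so both integrals exist under $\int\norm{L_\omega}d\bP<\infty$.

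Compactness of $\Pi_r$ follows as in Proposition \ref{prop:gencomp}: a Bochner integral of compact operators is a norm limit of finite sums of compact operators, hence compact. The bound $\norm{N_r}\le\int\ve_r\,d\bP$ is the Bochner norm inequality. Dominated convergence, using $\ve_r\le 1$ and $\ve_r\to0$ a.e., then makes this smaller than $\eta$ for large $r$.

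Orthogonality ($\Pi_rN_r=N_r\Pi_r=0$) is the delicate point. Fibrewise orthogonality does not survive averaging, since cross terms $\Pi_{\omega,r}N_{\omega',r}$ appear. I would therefore not insist on orthogonality of the integrated pieces. Instead, the decomposition compact $+$ small gives $r_{ess}(\cL)\le\norm{N_r}<\eta$, by Nussbaum's formula and invariance of the essential spectrum under compact perturbations. I would then redefine $\Pi_r$ as $\cL$ composed with the Riesz projector onto the finitely many eigenvalues of modulus $>\eta$, and $N_r:=\cL-\Pi_r$. These are orthogonal, and $\Pi_r$ is finite rank. The price is that the norm bound becomes a spectral radius bound $\|N_r^n\|\le C\eta'^n$, which is exactly the form needed in item \eqref{item2:propgenquasi}.

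For item \eqref{item2:propgenquasi}, fix $\ve>0$ and choose $r$ with $\ve_r<\ve$. On $W_r$, write $\cL=\sum_{|\lambda_i|>\ve_r}\cL P_i+N_r$, where $P_i$ is the spectral projector onto $E_r(\lambda_i)$. On each finite-dimensional $E_r(\lambda_i)$, put $\cL$ in Jordan form, $\cL P_i=\lambda_iP_i+D_i$ with $D_i$ nilpotent of order $m_i$. This gives $\cL^nP_i=\sum_{j<m_i}\binom nj\lambda_i^{n-j}D_i^jP_i$, which after re-expanding $\binom nj$ in powers of $n$ yields terms $\lambda_i^nn^j$. Define $c_{i,j}(f,g)$ as the corresponding coefficients of $\int (\cdot) f\,g\,d\mu$. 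The remainder satisfies $|\int N_r^nf\,g\,d\mu|\le C_r\ve_r^n\norm f_r\norm g_*$, provided $g$ defines a bounded functional on $W_r$, which I would assume via $g\in\bigcap W_r$ and a continuous pairing. This remainder is $o(\ve^n)$, and the finitely many eigenvalues with $\ve\le|\lambda_i|$ give the sum. Eigenvalues with $\ve_r<|\lambda_i|<\ve$ also contribute only $O(|\lambda_i|^nn^{m_i})=o(\ve^n)$.

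The main obstacle is independence of $r$. For $r'>r$, the injection $W_{r'}\subset W_r$ intertwines $\cL$. Hence every eigenvector in $W_{r'}$ is one in $W_r$. Conversely, the hypothesis that $E_r(\lambda_i)\subset\bigcap W_r$ gives the reverse inclusion, so $E_r(\lambda_i)=E_{r'}(\lambda_i)$ and $m_i$ is independent of $r$.

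The projectors need more care. $P_i^{(r')}$ is the restriction of $P_i^{(r)}$, because the resolvents agree on $W_{r'}$ off the spectrum and the Riesz contour integrals coincide there. Since $\bigcap W_r$ is dense and both sides are continuous on $W_r$, the coefficients $c_{i,j}$ restricted to $\bigcap W_r$ coincide for $r$ and $r'$. This settles independence of the choice of $r$.
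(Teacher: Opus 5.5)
Your route is the paper's. For item (1): integrate the fibre decomposition, get compactness of $\int\Pi_{\omega,r}\,d\bP$ from Proposition~\ref{prop:gencomp}, bound the remainder by $\int\ve_r\,d\bP$ using the Bochner norm inequality and dominated convergence, deduce that $r_{ess}(\cL)$ is small, and take the Riesz decomposition. For item (2): Jordan form on the generalised eigenspaces as in Lemma~\ref{lem:equivalence}, and independence of $r$ because the resolvents, hence the Riesz projectors, of $\cL_{|W_{r'}}$ are restrictions of those of $\cL_{|W_r}$. You are right that orthogonality and $\norm{N_r}\le\int\ve_r\,d\bP$ cannot both be retained after averaging. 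The paper's proof likewise only obtains $r_{ess}(\cL)\le\int\ve_r\,d\bP$ followed by a spectral decomposition, which is exactly your version with $\norm{N_r^n}\le C\eta'^n$.

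\textbf{The gap: your measurability fix in item (1).} Suppose you replace $\Pi_{\omega,r}$ by $L_\omega P_\omega$, with $P_\omega$ a Riesz projector of $L_\omega$. The new remainder $L_\omega(I-P_\omega)$ has small spectral radius, but its norm is controlled neither by $\ve_r(\omega)$ nor by $1$. For example, $L_\omega=\begin{pmatrix}1&M\\0&\theta\end{pmatrix}$ with $0<\theta<\ve_r(\omega)$ satisfies the hypothesis with $N_{\omega,r}=0$. Yet in the Euclidean norm, $\norm{L_\omega(I-P_\omega)}=\theta\sqrt{M^2+(1-\theta)^2}/(1-\theta)$, which is unbounded in $M$. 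So after the replacement you lose the bound $\norm{\Pi_{\omega,r}}\le\norm{L_\omega}+1$, which used $\norm{N_{\omega,r}}<1$. You also lose the Bochner estimate $\norm{\int N_{\omega,r}\,d\bP}\le\int\ve_r\,d\bP$. A fibrewise spectral-radius bound does not pass to the average: the mean of two nilpotent $2\times2$ matrices can have arbitrarily large spectral radius. Choosing the contour measurably in $\omega$ while avoiding the spectrum is a further unaddressed point.

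The paper avoids the issue only by tacitly assuming that the given family $\omega\mapsto\Pi_{\omega,r}$ is measurable and integrable when it invokes Proposition~\ref{prop:gencomp}. You can make the same assumption, or bypass $\Pi_{\omega,r}$ with the essential norm $\norm{T}_{e,r}:=\inf\{\norm{T-K}_{W_r}:K\text{ compact}\}$, which needs neither that assumption nor fibrewise orthogonality:
\begin{enumerate}
\item It is a $1$-Lipschitz seminorm on $L(W_r)$, so $\omega\mapsto\norm{L_\omega}_{e,r}$ is measurable.
\item The hypothesis gives $\norm{L_\omega}_{e,r}\le\ve_r(\omega)<1$.
\item Approximating $\cL$ by finite sums as in the proof of Proposition~\ref{prop:gencomp} gives $\norm{\cL}_{e,r}\le\int\norm{L_\omega}_{e,r}\,d\bP$, and this tends to $0$ as $r\to\infty$ by dominated convergence.
\end{enumerate}
Hence for large $r$ you can write $\cL=K+N$ with $K$ compact and $\norm{N}<\eta$. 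Your Nussbaum argument then gives $r_{ess}(\cL)<\eta$, and your Riesz decomposition completes item (1) as the paper does.
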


\begin{proof}
    See section \ref{sec:classic} for a proof of the Proposition.
\end{proof}

Among families $(T_\omega)_{\omega\in \Omega})$ whose fiberwise transfer operators $L_\omega$ are quasi-compact, we focus in Section~\ref{sec:piecexpand} on the class of Markov transitive piecewise linear expanding maps of the unit interval. Such families encompass many families of maps including the subclass of $\beta$-transformations for $\beta \in \mathbb{N}$. In Section~\ref{sec:piecexpand}, we prove the existence of \textbf{full Ruelle-Pollicott resonances} for such families and investigate techniques to localize the spectrum of the associated annealed operator. The main statement of this Section can be summarized into the following Theorem \ref{mainthm:B} (see Theorem \ref{thm:proj} for a detail version).

\begin{mainthm}\label{mainthm:B}
 (see Theorem \ref{thm:proj} for a general statement) Let $\tilde \cI$ be a partition of the interval $I$ and 
let $\omega \in \Omega \mapsto T_\omega$ be a measurable\footnote{the family is measurable for the $\sigma-$algebra of Borelians on $(C^\infty(\tilde \cI),\norm{\cdot}_\infty)$} family of transitive piecewise linear expanding Markov maps with Markov partition $\cI_\omega$ finer than $\cI$ and with $T_\omega(I_\omega)\cap I\in \{\emptyset, I\}$. Assume there is $1<\lambda$ such that for any $\omega\in \Omega$ and any $x\in I$, $\lambda\leq |T_\omega'(x)|$ and that $ \Lambda:=\int_\Omega \norm{T_\omega'}_\infty d\bP(\omega)<\infty $,
then there is a family $(W_r)_{r\in \bN}$ of Banach spaces  such that for $r\in \bN$,  $\omega \mapsto L_\omega \in (L(W_r),\norm{\cdot}_{W_r\mapsto W_r})$ is measurable
and $\cL=\int L_{\omega} d\bP(\omega)$ satisfies the \textbf{decorrelation property}.
\end{mainthm}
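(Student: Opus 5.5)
The plan is to build $W_r$ as a space of piecewise $C^r$ functions on the common partition $\tilde\cI$, and then reduce Theorem~\ref{mainthm:B} to Proposition~\ref{propgenquasi}. Concretely, set
\[
W_r:=\{f: f_{|J}\in C^r(\bar J)\ \forall J\in\tilde\cI\},
\qquad
\norm{f}_{W_r}:=\sum_{k\le r} a^{k}\sup_{J\in\tilde\cI}\norm{f^{(k)}_{|J}}_\infty ,
\]
with a weight $a>0$ to be tuned. Because every $T_\omega$ is piecewise linear and Markov with $\cI_\omega$ finer than $\tilde\cI$, and each branch image is either empty or all of $I$, the fibre operator has the form
\[
L_\omega f(x)=\sum_{J\in\cI_\omega}\frac{1}{|T'_{\omega,J}|}\,f\circ (T_{\omega|J})^{-1}(x).
\]
Here each inverse branch is affine and maps $I$ into an element of $\tilde\cI$. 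Hence $L_\omega$ preserves $W_r$, and differentiating $k$ times gives
\[
(L_\omega f)^{(k)}=\sum_J \frac{\pm1}{|T'_{\omega,J}|^{k+1}}\, f^{(k)}\circ T_{\omega|J}^{-1}.
\]
Two facts follow from this formula. First, the right-hand side depends continuously on the branch data of $T_\omega$, which gives measurability of $\omega\mapsto L_\omega\in L(W_r)$. Second, it gives the bound $\norm{L_\omega}_{W_r}\lesssim\sum_J|T'_{\omega,J}|^{-1}=1$ on the constant-weight part. Integrability of $\norm{L_\omega}$ should therefore come for free; the hypothesis $\Lambda<\infty$ is what controls the number of branches when polynomial corrections are needed.

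The second step is the splitting \eqref{eqthmegenquasizero}, in the spirit of \cite{BKL22}. Let $\Pi^{(r)}$ be the finite-rank projection that sends $f$ to its degree-$(r-1)$ Taylor polynomial on each $J\in\tilde\cI$, taken at the left endpoint, with kernel the functions vanishing to order $r-1$ there. Since affine changes of variables preserve polynomial degree, $L_\omega$ maps piecewise polynomials of degree $<r$ into themselves. The piece $P_{\omega,r}:=L_\omega\Pi^{(r)}$ is then finite rank, and I take $\Pi_{\omega,r}:=\Pi^{(r)}L_\omega$ and $N_{\omega,r}:=(I-\Pi^{(r)})L_\omega(I-\Pi^{(r)})$. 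For $f$ in the kernel, only the top derivative matters up to constants, and
\[
\norm{L_\omega f}\le \sum_J |T'_{\omega,J}|^{-(r+1)}\norm{f^{(r)}}\le \lambda^{-r}\norm{f}_{W_r}.
\]
So $\ve_r(\omega)\le C\lambda^{-r}\to0$, uniformly in $\omega$, which is the "orthogonal" small part required by item~\ref{item1:propgenquasi} of Proposition~\ref{propgenquasi}. Integrating over $\omega$ then gives $\cL=\Pi_r+N_r$ with $\norm{N_r^n}\le C_r\lambda^{-rn}$.

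The third step is to verify the hypotheses of item~\ref{item2:propgenquasi}. The inclusions $W_r\subset W_{r-1}$ are continuous. The intersection $\bigcap_r W_r$, the piecewise $C^\infty$ functions, is dense in each $W_r$, since piecewise polynomials are. Eigenvectors of $\cL$ for eigenvalues $|\lambda_i|>C\lambda^{-r}$ lie in the range of the polynomial block, by the triangular structure with respect to $\Pi^{(r)}$. They are therefore piecewise polynomials, hence in $\bigcap_r W_r$ and independent of $r$. Proposition~\ref{propgenquasi} then yields the decorrelation property. The spectrum can be read off from the upper-triangular action of $\cL$ on the graded pieces of degree $k$: that action is a $\#\tilde\cI\times\#\tilde\cI$ matrix with entries $\int\sum_{J}(T'_{\omega,J})^{-(k+1)}\,d\bP$.

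I expect the main obstacle to be the second step. The difficulty is making $N_{\omega,r}$ genuinely commute, or be orthogonal, with the projection, while keeping uniform control. Boundary terms at partition points arise because $L_\omega f$ for $f$ in the kernel need not vanish to order $r-1$ at the left endpoints. If this fails, I would fall back on a triangular decomposition with respect to the polynomial filtration and use the weight $a$ so that the off-diagonal terms are small, rather than requiring exact orthogonality.
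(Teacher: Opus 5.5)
Your proposal is correct. For the spectral part it takes a genuinely different route from the paper.

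\textbf{The paper's route.} The paper works on the $L^1$-Sobolev spaces $W^{r,1}(\tilde\cI)$. It explicitly declines to use item~(\ref{item1:propgenquasi}) of Proposition~\ref{propgenquasi}, since a fibrewise quasi-compact splitting need not have a contracting remainder. Instead it proves a Lasota--Yorke inequality for the averaged operator (Lemma~\ref{lem:LY}) and obtains $r_{ess}(\cL_k)\le\lambda^{-(k+r-1)}$ from Hennion's theorem. It then traps generalized eigenvectors in $\fB_r(\tilde\cI)$ using $D^l\cL_k=\cL_{k+l}D^l$ and the spectral-radius bound on $\cL_{k+r}$.

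\textbf{Your route.} You replace this with $C^r$ sup-norms and the Taylor projection $\Pi^{(r)}$. Your splitting is exact because $(I-\Pi^{(r)})L_\omega\Pi^{(r)}=0$, and its remainder is small in norm uniformly in $\omega$. So the mechanism of item~(\ref{item1:propgenquasi}) does work here. The underlying fact is the same as in the paper: $\partial^r$ identifies $\ker\Pi^{(r)}$ with piecewise continuous functions, and on that space it conjugates $(I-\Pi^{(r)})\cL$ to $\cL_{r+1}$.

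\textbf{What each route buys.} Your version avoids compact embeddings and Hennion's theorem. The compact part $\Pi^{(r)}\cL$ is finite rank with range in a fixed finite-dimensional space, so the compactness of the averaged compact part, which item~(\ref{item1:propgenquasi}) normally obtains from Proposition~\ref{prop:gencomp}, is automatic. Localising the generalized eigenvectors then takes one line from block-triangularity. The paper's route works in $L^1$-type norms, where $L_{\omega,1}$ is a contraction with no partition-dependent constants, and it treats all weights $\cL_k$ at once. Its Lasota--Yorke/Hennion strategy is also the one that extends beyond affine branches; your exact splitting needs affine branches in order to preserve piecewise polynomials.

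\textbf{The anticipated obstacle is not real.} Write $g:=(I-\Pi^{(r)})f$. The boundary terms are exactly $\Pi^{(r)}L_\omega g$, and they belong to the finite-rank piece. Because of the outer $(I-\Pi^{(r)})$, only $(L_\omega g)^{(r)}$ enters the norm of $N_{\omega,r}f$. This gives $\norm{N_{\omega,r}}_{W_r}\le C\lambda^{-r}$ with $C$ depending only on $a$ and $\tilde\cI$. Orthogonality is never used. Finite rank plus small norm already gives $r_{ess}(\cL)\le C\lambda^{-r}$, which is all the proof of item~(\ref{item1:propgenquasi}) uses. The orthogonal decomposition required in item~(\ref{item2:propgenquasi}) then comes from the Riesz projections, so no weighted triangular fallback is needed.

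\textbf{Two minor points.}
First, you read $T_\omega(I_\omega)\cap I\in\{\emptyset,I\}$ literally as full branches. For the Markov condition relative to $\tilde\cI$ intended in Theorem~\ref{thm:proj}, the same argument works once $\sum_J|T'_{\omega,J}|^{-1}=1$ is replaced by $\sup_x\sum_{J:\,x\in T_\omega(J)}|T'_{\omega,J}|^{-1}\le(\min_{\tilde I\in\tilde\cI}|\tilde I|)^{-1}$.
Second, continuity in the branch data is only strong continuity, because composition operators are not norm-continuous on $C^r$. So the operator-norm measurability asserted in the statement needs the dense-family step of Lemma~\ref{lemmesurabl}, as in the paper.
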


As a byproduct of the proof of Theorem \ref{thm:proj}, the annealed operator $\cL$ is mixing, that is $\lambda=1$ is the only eigenvalue on the unit disc and we have a unique stationary measure $\mu$ absolutely continuous with respect to the Lebesgue measure. Notice however that, as expected but contrary to the Blaschke product case, the fiber wise operators may not share the same invariant measure $\mu$.

\section{Annealed operator of families of piecewise-linear expanding maps}\label{sec:piecexpand}

 In this section we investigate random families of piecewise linear expanding Markov interval maps of \(I=[0,1]\). The ergodic properties of such random families have been studied in \cite{ANV2015} but the description of the Ruelle-Pollicott spectrum has only been given in the deterministic case, that is for a given piecewise linear expanding interval map $T$ of \(I=[0,1]\) one map. The \textbf{full Ruelle-Pollicott resonances} property can be found in \cite{BKL22}. The challenge to check \textbf{full Ruelle-Pollicott resonances} lay on the fact that families of maps can have different Markov partitions and a key element to apply the framework of Proposition \ref{propgenquasi} is to find a family of Banach spaces well adapted to any of these Markov maps. This will be done through the introduction of the sets $C^\infty(\cI,\tilde \cI)$ and $W_r(\cI)$ that emphasize a distinction between the Markov partition $\cI$ of a map $T$ and the partition $\tilde \cI$ generated by $\{T(I),I\in \cI\}$. The latter one is what help us constructing the desired Banach spaces. We introduce in what follows definitions and relevant quantities to study Markov linear expansions and the key Banach spaces. Then we will prove full Ruelle-Pollicott resonances using the properties from \cite{BKL22} and the framework of Proposition \ref{propgenquasi}.
 
Let \(I=[0,1]\) and let $(\Omega,\bT,\bP)$ be a probability space to which we adjoin a family of piecewise affine expanding Markov maps $F=\{(T_\omega)_{\omega \in \Omega}\}$ such that $(x,\omega)\mapsto T_\omega(x)$ is $I\otimes B(I)$ measurable, and the family $F$ satisfies the following :
 There is a quasi-partition $\tilde \cI$ of $I$ made of a collection of disjoint open intervals $\{\tilde I_i\}_{i=1}^N = \{(a_i, a_{i+1})\}_{i=1}^N$ such that for any $\omega\in \Omega$, each $T_\omega \in F$, admits a finite partition  $\cI_\omega$ made of disjoint intervals such that any element of $\tilde \cI$ is made of elements of $\cI_\omega$ and\footnote{In other words, \(\tilde \cI\) is coarser than \(\cI_\omega\) for every \(\omega\in\Omega\).} $T_\omega'$ is constant on it. Furthermore, we assume that for $I_\omega \in \cI_\omega$ and $\tilde I \in \tilde \cI$,

\[
\text{either}\quad T_\omega(I_\omega)\cap \tilde I=\emptyset \quad \text{or}\quad \tilde I\subset T_\omega(I_\omega).
\]

In particular when, for all $\omega\in \Omega$, $\tilde \cI=\cI_\omega$, then the maps $(T_\omega)_{\omega\in \Omega}$ are Markov maps with a common Markov partition $\tilde \cI$. For each $\omega\in \Omega$, we choose to enumerate the $M(\omega)$ elements of $\cI_\omega$ and for $I_{\omega,i}\in \cI_\omega$, we will denote by $\lambda_{\omega,i}:=T_\omega'|_{I_{\omega,i}}$.

We use \(\cC^\infty(\cI,\tilde \cI)\)  to denote the set of functions $T$ on \(I\) which are \(\cC^\infty\) when restricted to each \(I_j\in  \cI\) and whose image $T(I_j)$ is a union of element of  \(\tilde \cI\). We will denote \
$$
\cC^\infty(\tilde \cI):=\bigcup_{\tilde \cI\preceq \cI}\cC^\infty(\cI,\tilde \cI),$$
 where $\tilde \cI\preceq \cI$ means that the elements of $\tilde \cI$ can be written as union of elements of $\cI$.  
Finally, we use \(\sigma_{\cB}(\cL)\) for the spectrum of an operator \(\cL\) acting on a Banach space \(\cB\).

Observe that for any \(r\geq 0\), the Sobolev space \(W^{r,1}(I)\) is the set of all \(h \in L^1(I)\) such that \(h\) and all of its weak derivatives up to the \(r\)th belong to \(L^1(I)\). We define the space \(W^{r,1}(\tilde \cI)\) as the set of all \(h \in L^1(I)\) such that, for each \(\tilde I_i\in \tilde \cI\), \(h|_{\tilde I_i}\in W^{r,1}(\tilde I_i)\). For convenience we write \(h'\) and \(h^{(l)}\) to mean the weak derivative and the \(l-\)th weak derivative respectively of \(h\) restricted to elements of \(\tilde \cI\). For each \(r \in \bN_0\) the space \(W^{r,1}(\tilde \cI)\) is a Banach space equipped with the norm
\[
\norm{h}_{r,1} = \sum\limits_{l=0}^r \int_\cI |h^{(l)}(x)|\  dx. 
\]

In the following, to simplify notation, we will write \(W_r\) for \(W^{r,1}(\tilde \cI)\) and we will write \(\norm{\cdot}_r\) for \(\norm{\cdot}_{r,1}\). Observe that \(W_0\) coincides with \(L^1(I)\).

For \(\omega\in\Omega\), define a \(M(\omega)\times N\) matrix \(A_\omega\) as 
\[
A_\omega[i,j] = 1,\ \text{if}\ \tilde I_j\subset T_\omega(I_{\omega,i})\quad
\text{and } 
A_\omega[i,j]=0,\ \text{if}\ T_\omega(I_{\omega,i})\cap \tilde I_j=\emptyset,
\]
which is the \emph{adjacency matrix} of the map \(T_\omega\). For \(k\in \bN_0\) and \(\omega\in\Omega\), let \(B_{\omega,k}\) be the \(N\times M(\omega)\) matrix defined by 
\begin{align}\label{eq:bkformula}
    B_{\omega,k}[i,j] := \lambda_{\omega,j}^{-k} A_\omega[j,i],
\end{align}

where (we recall that) $\lambda_{\omega,j}:=T_\omega'|_{I_{\omega,j}}$.

For such a map we define the Transfer operator $L_\omega$, associated to \(T_\omega\) as follows : for $x\in I$,

\begin{equation}\label{eq:transfer_operator}
L_{\omega,k} h(x):=\sum_{y\in T_\omega^{-1}(x)}\frac{h(y)}{|T_\omega'(y)|^k}=\sum_{1\leq i \leq N} 1_{\tilde I_i}\sum_{1\leq j \leq M(\omega)}B_{\omega,k} [i,j]h\circ g_{\omega,j}     
\end{equation}

with $B_{\omega,k} [i,j]=\lambda_{\omega,j}^{-k} A_\omega[j,i]$ where $A_\omega[i,j]=1$ if $\tilde I_j\subset T_\omega(I_{\omega,i})$ and $0$ otherwise, and \(g_{\omega,j}=T_\omega|_{I_{\omega,j}}^{-1}\).

Since 
composition with an affine transformation preserves Sobolev space and the sum consists of a finite number of terms it follows that these operators are
well defined as operators \(L_{\omega,k} : W_r(\tilde\cI) \to W_r(\tilde \cI)\). Similarly they are well defined, by this same formula, on \(\cC^r(\cI) =\oplus_i \cC^r(I_i)\).\\
Notice that since $ \tilde \cI \preceq \cI_\omega$,
$T_\omega$ is a Markov operator on $\cI_\omega$. However, instead of looking at $L_{\omega,k}$ as an operator on $W_r(\cI_\omega)$ as in \cite{BKL22}, we look at it as an operator on $W_r(\tilde \cI)$. The two Banach spaces can be linked through the following lemma :

\begin{lemma}\label{lemextpartition}
If $ \tilde \cI \preceq \cI_\omega$, then any $h\in W_r(\tilde\cI)$ satisfies $h\in  W_r(\cI_\omega)$ and $\norm{h}_{W_r(\tilde \cI)}=\norm{h}_{W_r(\cI_\omega)}$.
\end{lemma}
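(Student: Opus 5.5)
The lemma follows directly from the definitions, because refining the partition only adds cut points. The hypothesis $\tilde\cI\preceq\cI_\omega$ means that each $\tilde I_i=(a_i,a_{i+1})\in\tilde\cI$ is, up to a finite set of endpoints, the disjoint union of the elements $I_{\omega,j}\in\cI_\omega$ contained in it. Since $\tilde\cI$ and $\cI_\omega$ are both quasi-partitions of $I$ into finitely many intervals, each $I_{\omega,j}$ lies in exactly one $\tilde I_i$, and the leftover endpoints form a Lebesgue-null set.

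The argument has two steps.

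\emph{Membership.} Take $h\in W_r(\tilde\cI)$, so that $h|_{\tilde I_i}\in W^{r,1}(\tilde I_i)$ for every $i$. Fix $I_{\omega,j}\subset\tilde I_i$. Restricting a Sobolev function to an open subinterval stays in the Sobolev space. Concretely, if $\psi\in C_c^\infty(I_{\omega,j})$, then $\psi$ extended by zero lies in $C_c^\infty(\tilde I_i)$. Hence the weak derivatives $(h|_{\tilde I_i})^{(l)}$, restricted to $I_{\omega,j}$, serve as the weak derivatives of $h|_{I_{\omega,j}}$. These restrictions are in $L^1(I_{\omega,j})$, so $h|_{I_{\omega,j}}\in W^{r,1}(I_{\omega,j})$ for every $j$, and therefore $h\in W_r(\cI_\omega)$.

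\emph{Equality of norms.} By the previous step, the $l$-th piecewise weak derivative of $h$ relative to $\cI_\omega$ coincides almost everywhere with the one relative to $\tilde\cI$. They agree on each $I_{\omega,j}$, and the $I_{\omega,j}$ cover $I$ up to a null set. Additivity of the integral over this decomposition then gives, for each $l\le r$,
\[
\int_I |h^{(l)}|\,dx=\sum_i\int_{\tilde I_i}|h^{(l)}|\,dx=\sum_i\sum_{I_{\omega,j}\subset\tilde I_i}\int_{I_{\omega,j}}|h^{(l)}|\,dx .
\]
Summing over $l=0,\dots,r$ yields $\norm{h}_{W_r(\tilde\cI)}=\norm{h}_{W_r(\cI_\omega)}$.

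The only point needing care is consistency of the weak derivatives. The derivative on the finer pieces is computed against a smaller class of test functions. One must check that it carries no singular part at the new cut points, and it does not, because those points lie in the interior of some $\tilde I_i$, where $h$ is already Sobolev. The reverse inclusion fails, since a function in $W_r(\cI_\omega)$ may jump at the cut points of $\cI_\omega$. The lemma only asserts the one direction, so this causes no problem.
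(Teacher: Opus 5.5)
Your proposal is correct and follows the paper's argument: both split each $\int_{\tilde I_i}|h^{(l)}|\,dx$ into the integrals over the pieces $I_{\omega,j}\subset\tilde I_i$ and then sum over $l\le r$. Your extra check, via zero-extension of test functions, that the weak derivatives on $\tilde I_i$ restrict to the weak derivatives on each $I_{\omega,j}$ fills in a step the paper leaves implicit when it deduces $h\in W_r(\cI_\omega)$ from the norm identity.
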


\begin{proof}
Recall that any interval $I_i\in \tilde \cI$ is a union of elements $I_{\omega,1},\dots,I_{\omega,k_i}\in \cI_\omega$. Thus for any $h\in W_r(\tilde \cI)$, 
\begin{align*}
    \norm{h}_{W_r(\tilde \cI)}&=\sum_{l\leq r}\sum_{I_i\in \tilde \cI}\int |h^{(l)}|1_{I_i}dx\\
    &=\sum_{l\leq r}\sum_{I_i\in \tilde \cI}\int \sum_{j=1}^{k_i}|h^{(l)}|1_{I_{\omega,j}}dx\\
    &=\norm{h}_{W_r(\cI_\omega)}.
\end{align*}
Thus $h\in W_r(\cI_\omega)$ and the Lemma is proven.
\end{proof}

We define the space \(\fB_r(\cI)\) of piecewise polynomial functions of degree at most \(r\) on each interval $I_i\in \cI$.

We now state the main Theorem of this section here :

\begin{theorem}\label{thm:proj}
Let $\tilde \cI$ be a partition of the interval $I$ and
let $\omega \in \Omega \mapsto T_\omega\in (C^\infty(\tilde \cI),\norm{\cdot}_\infty)$ be a measurable family of transitive piecewise linear expanding maps for the Borel $\sigma-$algebra of $(C^\infty(\tilde \cI),\norm{\cdot}_\infty)$. Assume there is $1<\lambda$ such that for any $\omega\in \Omega$ and any $x\in I$, $\lambda\leq |T_\omega'(x)|$ and that $ \Lambda:=\int_\Omega \norm{T_\omega'}_\infty d\bP(\omega)<\infty $,
then for $r\geq 1$,  $\omega \mapsto L_\omega \in (L(W_r),\norm{\cdot}_{W_r\mapsto W_r})$ is measurable
and $\cL_k=\int L_{\omega,k} d\bP(\omega)$ satisfies the \textbf{decorrelation property}.
\end{theorem}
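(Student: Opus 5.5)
The plan is to verify the hypotheses of both items of Proposition \ref{propgenquasi} for the Banach spaces $W_r=W^{r,1}(\tilde\cI)$. These spaces are nested with continuous injections, and $\bigcap_r W_r$ contains the piecewise smooth functions on $\tilde\cI$, which are dense in each $W_r$.

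The first step is measurability of $\omega\mapsto L_{\omega,k}\in L(W_r)$. From \eqref{eq:transfer_operator}, $L_{\omega,k}$ depends only on finitely many data: the partition points of $\cI_\omega$, the slopes $\lambda_{\omega,j}$ and the matrix $A_\omega$. These depend measurably on $T_\omega\in(C^\infty(\tilde\cI),\norm{\cdot}_\infty)$, so $\omega\mapsto L_{\omega,k}h$ is measurable for each $h$. Because $\tilde\cI$ is fixed and each inverse branch $g_{\omega,j}$ is affine, a perturbation of the data changes $L_{\omega,k}h$ continuously in $W_r$. To pass to measurability in operator norm, I would first restrict to the set where $M(\omega)$ and $A_\omega$ are fixed; there are countably many such sets. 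On each set I would use continuity of $(\text{data})\mapsto L_{\omega,k}$ in norm when the input is restricted to $\bigcap_r W_r$, then extend by density.

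The second step is the fiberwise decomposition \eqref{eqthmegenquasizero}. By Lemma \ref{lemextpartition}, $h\in W_r(\tilde\cI)$ lies isometrically in $W_r(\cI_\omega)$. Differentiating \eqref{eq:transfer_operator} $l$ times gives
\[
(L_{\omega,k}h)^{(l)}=L_{\omega,k+l}(h^{(l)}).
\]
Following \cite{BKL22}, let $\Pi_{\omega,r}$ be the projection built from the finite-rank space $\fB_{r-1}(\tilde\cI)$: take the Taylor or Legendre-type projection $P_r$ onto piecewise polynomials of degree less than $r$ on each $\tilde I_i$, and set $\Pi_{\omega,r}=L_{\omega,k}P_r$ and $N_{\omega,r}=L_{\omega,k}(I-P_r)$. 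The space $\fB_{r-1}(\tilde\cI)$ is invariant, since $L_{\omega,k}$ maps piecewise polynomials on $\tilde\cI$ to piecewise polynomials on $\tilde\cI$; this is exactly where the condition $T_\omega(I_\omega)\cap\tilde I\in\{\emptyset,\tilde I\}$ is used. For the remainder I would use an equivalent norm on $W_r$ dominated by the top derivative on $(I-P_r)W_r$. This gives
\[
\norm{N_{\omega,r}}\lesssim \norm{L_{\omega,k+r}}_{L^1}\le \sup_j\lambda_{\omega,j}^{-(k+r-1)}\le\lambda^{-(k+r-1)}.
\]
So I can take $\ve_r(\omega)=\lambda^{-(k+r-1)}$, uniformly in $\omega$, which tends to $0$. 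The integrability $\int\norm{L_{\omega,k}}_{W_r}d\bP<\infty$ should follow from $\Lambda<\infty$: the number of branches is at most of order $\norm{T'_\omega}_\infty$, and each branch has weight at most $\lambda^{-k}$.

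Item \ref{item1:propgenquasi} then gives quasi-compactness of $\cL_k$ on $W_r$, with $\Pi_r=\int\Pi_{\omega,r}\,d\bP$. This operator has finite rank, because every $\Pi_{\omega,r}$ ranges in the fixed finite-dimensional space $\fB_{r-1}(\tilde\cI)$. Moreover $\norm{N_r}\le\lambda^{-(k+r-1)}$, and since $\fB_{r-1}$ is invariant the same bound iterates, giving $\norm{N_r^n}\le C_r\lambda^{-n(k+r-1)}$.

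For item \ref{item2:propgenquasi} it remains to show that the generalized eigenspaces for eigenvalues with $|\mu|>\ve_r$ lie in $\bigcap_r W_r$. Since $\fB_{r-1}\subset\fB_{r}\subset\bigcap_s W_s$, I would show that such eigenvectors lie in $\fB_{r-1}(\tilde\cI)$, or at least in $\fB_{s}$ for some finite $s$. Write $\cL_k$ in block-triangular form relative to the invariant filtration $\fB_0\subset\fB_1\subset\cdots\subset W_r$. On the quotient $W_r/\fB_{r-1}$ the spectral radius is at most $\lambda^{-(k+r-1)}$, so every eigenvalue of modulus larger than this has its generalized eigenspace inside $\fB_{r-1}$. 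The same spectrum is then obtained for all larger $r$, with the same eigenspaces, which gives independence of $r$ for the coefficients $c_{i,j}$.

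I expect the main obstacle to be the uniform contraction estimate for $N_{\omega,r}$ on $(I-P_r)W_r$. The lower-order terms must be controlled with an equivalent norm that is uniform in $\omega$, even though $\cI_\omega$ varies. Here I would use that all lower derivatives of elements of $(I-P_r)W_r$ are controlled by $h^{(r)}$ through Poincaré-type inequalities on the fixed intervals $\tilde I_i$. Since these intervals do not depend on $\omega$, the resulting constants are uniform.
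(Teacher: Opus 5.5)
\textbf{Gap in the item~(\ref{item1:propgenquasi}) step.} The one-step bound $\norm{N_{\omega,r}}\lesssim\lambda^{-(k+r-1)}$ for $N_{\omega,r}=L_{\omega,k}(I-P_r)$ fails. Your equivalent norm controls the \emph{input} $(I-P_r)h$ by $\norm{h^{(r)}}_{L^1}$. The \emph{output} $L_{\omega,k}(I-P_r)h$, however, does not lie in $(I-P_r)W_r$: its component $P_rL_{\omega,k}(I-P_r)h\in\fB_{r-1}(\tilde\cI)$ is generically nonzero. Already for $r=1$, with $P_1$ the averaging projection on each $\tilde I_i$, the function $(I-P_1)h$ has zero mean on every $\tilde I_i$, but $L_{\omega,k}(I-P_1)h$ need not. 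The reason is that $g_{\omega,j}(\tilde I_i)$ is typically a proper subinterval of an element of $\tilde\cI$.

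That polynomial component is controlled only through $\norm{L_{\omega,k}}_{L^1}$, with a factor $\lambda^{-(k-1)}$ and no gain in $r$. So you do not get $\norm{N_{\omega,r}}\le\ve_r(\omega)\to 0$, and item~(\ref{item1:propgenquasi}) of Proposition~\ref{propgenquasi} cannot be invoked this way. In addition, $\Pi_{\omega,r}=L_{\omega,k}P_r$ and $N_{\omega,r}$ are not orthogonal, as item~(\ref{item2:propgenquasi}) requires. This is exactly the obstruction the paper cites for not using item~(\ref{item1:propgenquasi}). Your iterated bound $\norm{N_r^n}\le C_r\lambda^{-n(k+r-1)}$ is nevertheless true, for a different reason. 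With $Q=I-P_r$, invariance of $\fB_{r-1}(\tilde\cI)$ gives $Q\cL_k=Q\cL_kQ$, hence $N_r^n=\cL_k(Q\cL_kQ)^{n-1}$, and $Q\cL_kQ$ is governed by the quotient.

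\textbf{How to fix it, and comparison with the paper.} The proof goes through if you drop the appeal to item~(\ref{item1:propgenquasi}) and let your last paragraph carry the load, which it can.
\begin{itemize}
\item $D^r:W_r\to L^1$ is onto, with kernel exactly $\fB_{r-1}(\tilde\cI)$.
\item Being bounded, $D^r$ commutes with the Bochner integral, so $D^r\cL_k=\cL_{k+r}D^r$ (the paper's Lemma~\ref{lem:to_deriv}).
\item Hence $\cL_k$ on $W_r$ is block triangular. On $\fB_{r-1}(\tilde\cI)$ it is the finite matrix of \eqref{eq:matrixrep}, with diagonal blocks $B_k,\dots,B_{k+r-1}$. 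On $W_r/\fB_{r-1}(\tilde\cI)\cong L^1$ it is a copy of $\cL_{k+r}$, whose $L^1$-norm is at most $\lambda^{-(k+r-1)}$.
\end{itemize}
Consequently, outside the disc of radius $\lambda^{-(k+r-1)}$ the spectrum consists of eigenvalues of that matrix. Applying $D^r$ to $(\cL_k-\nu)^mh=0$ shows that the generalised eigenspaces lie in $\fB_{r-1}(\tilde\cI)\subset\bigcap_sW_s$. The Riesz projections then give the orthogonal decomposition with $\norm{N_r^n}\le C_\delta(\lambda^{-(k+r-1)}+\delta)^n$ that item~(\ref{item2:propgenquasi}) needs.

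The paper instead proves an annealed Lasota--Yorke inequality (Lemma~\ref{lem:LY}), applies Hennion's theorem via the compact embedding $W_r\hookrightarrow W_{r-1}$, and then localises eigenvectors separately using $D^lE_k(\nu)\subset E_{k+l}(\nu)$. Your quotient argument gives the essential spectral bound and the localisation in one step and needs no compactness.

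One minor correction: integrability of $\norm{L_{\omega,k}}_{W_r}$ comes from the change of variables $\int_I L_{\omega,1}|g|\,dx=\int_I|g|\,dx$, since the branches partition $I$. This gives $\norm{L_{\omega,k}}_{W_r}\le\Lambda_\omega$, whereas counting branches loses a further factor of order $\Lambda_\omega$.
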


\begin{remark}
    As a by-product of the proof of Theorem \ref{thm:proj}, we will prove that the space  \(\fB_r(\tilde \cI)\) of piecewise polynomial functions of degree \(r\) on each interval $I_i\in \tilde \cI$ is a finite dimensional invariant space for the annealed operator $\cL_k$. This will be an important feature to read the resonances on the Jordan form of the matrix representation of $\cL_k$ on $\fB_r(\tilde \cI)$ we will expand on it in the current section after the proof of Theorem \ref{thm:proj}.
\end{remark}

The strategy of the proof consists of using the framework of Proposition \ref{propgenquasi} to obtain the decorelation property \eqref{eq:decoform}. We proceed in two steps, \textbf{Step 1} consists of proving that the family $\omega\mapsto T_\omega$ generates a measurable family $\omega\mapsto L_{\omega,k}\in L(W_r)$ for the right $\sigma$-algebra. In \textbf{Step 2} we put ourselves in the conditions for the second statement of Proposition \ref{propgenquasi} to hold. That is we prove that $\cL_k$, as an operator over $W_r(\tilde \cI)$, has a shrinking essential spectrum while $r$ is growing. 
Then we prove that the eigenspaces of the annealed operator $\cL_k$ associated to large enough eigenvalues are embedded in the space $\cC^\infty(\tilde{\cI},\tilde{\cI})\subset \bigcap_{r\in \mathbb{N}} W_r$ thus checking the second statement of Proposition \ref{propgenquasi}.\\

\textbf{Step 1 : measurability of $\omega \mapsto L_{\omega,k}$} 
In order to talk about the annealed operator, we give some justification for the measurability of the family $\omega \mapsto L_{\omega,k}$ which are dealt with through the technical Lemma \ref{lemmesurabl} and  the following Proposition \ref{prop:measop}.

\begin{proposition}\label{prop:measop}
Let $\omega \mapsto T_\omega \in C^\infty(\cI)$ be a measurable family, then for any $r>0$, $\omega \mapsto L_{\omega,k}(\cdot)\in L(W_r(\cI))$ is measurable. 
\end{proposition}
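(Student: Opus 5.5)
We will prove that $\omega\mapsto L_{\omega,k}$ is Borel measurable from $\Omega$ into $(L(W_r(\cI)),\norm{\cdot}_{W_r\to W_r})$ by writing it as a composition $\omega\mapsto T_\omega\mapsto \Phi(T_\omega)$, where $\Phi$ sends a piecewise affine Markov map to its transfer operator. Since $\omega\mapsto T_\omega$ is measurable by hypothesis, it suffices to show that $\Phi$ is Borel measurable on the relevant subset of $(C^\infty(\cI),\norm{\cdot}_\infty)$. We do this by splitting that subset into countably many measurable pieces and showing that $\Phi$ is continuous on each piece.

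\textbf{Step 1: combinatorial stratification.} By \eqref{eq:transfer_operator}, $L_{\omega,k}$ is determined by finitely many parameters:
\begin{itemize}
\item the number $M(\omega)$ of branches;
\item the endpoints of the intervals $I_{\omega,j}$;
\item the adjacency matrix $A_\omega$;
\item the slopes $\lambda_{\omega,j}$, which determine the inverse branches $g_{\omega,j}(x)=T_\omega(c_j)+\lambda_{\omega,j}^{-1}(x-c_j')$, these being affine.
\end{itemize}
For each fixed $M$ and each fixed $0$--$1$ matrix $A$, let $S_{M,A}$ be the set of maps with $M$ branches and adjacency matrix $A$. There are countably many such sets, and they cover the image of $\omega\mapsto T_\omega$. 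We will show that each $S_{M,A}$ is Borel, and this is what the technical Lemma \ref{lemmesurabl} is presumably meant to supply. The approach is to express the breakpoints and slopes as limits of measurable functionals. For example, the slope at a rational point of a branch is a limit of difference quotients, which are continuous in $\norm{\cdot}_\infty$. The breakpoints are detected as the points where these quotients jump. Conditions of the form $\tilde I_j\subset T(I_i)$ are then countable intersections of conditions on values at rational points, hence Borel.

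\textbf{Step 2: continuity of the parameters.} On each stratum $S_{M,A}$, the branch endpoints and slopes $(\lambda_j)_{j\le M}$ are Borel functions of $T$, by the limiting procedure of Step 1.

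\textbf{Step 3: continuity of the operator.} We then show that
\[
(\text{endpoints},\text{slopes})\mapsto L\in L(W_r(\cI))
\]
is continuous with $A$ fixed. Since $B_k[i,j]=\lambda_j^{-k}A[j,i]$ is continuous in the slopes, we only need to compare $h\circ g_j$ with $h\circ \tilde g_j$ for nearby affine maps $g_j,\tilde g_j$ in the $W^{r,1}(\tilde I_i)$ norm.

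This is the main obstacle. The map $h\mapsto h\circ g$ is \emph{not} norm-continuous in $g$ on $W^{r,1}$, because translation is only strongly continuous, not norm continuous. Two ways around this:
\begin{itemize}
\item Use measurability for the strong operator topology instead. The map $\omega\mapsto L_{\omega,k}h$ is measurable into $W_r$ for every $h$, since translation and dilation are strongly continuous on $W^{r,1}$. We would then interpret the measurability required in Proposition \ref{prop:integomeg} in this Pettis/strong sense, which is enough to define $\cL_k$ on the separable space $W_r$.
\item Alternatively, observe that in the Markov situation the image intervals are unions of elements of $\tilde\cI$. This forces the endpoints to take only countably many combinatorial values. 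If the strata can be refined so that the branches $g_j$ take countably many values, then $\Phi$ is countably-valued on each stratum and norm measurability is immediate.
\end{itemize}
I would try the second route first. If the slopes genuinely vary continuously, I would fall back on strong measurability and separability of $W_r$.
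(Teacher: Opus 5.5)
Your proposal does not prove the statement as written. In the paper, ``measurable'' means Borel for $(L(W_r),\norm{\cdot}_{W_r\to W_r})$, and neither of your routes delivers that.

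\begin{itemize}
\item Your second route (countably valued strata) fails in general. The Markov condition only constrains the images $T_\omega(I_{\omega,j})$, not the breakpoints of $\cI_\omega$. Take $\tilde\cI=\{(0,1)\}$ and the tent maps $T_a(x)=x/a$ on $(0,a)$, $T_a(x)=(1-x)/(1-a)$ on $(a,1)$, $a\in(\delta,1-\delta)$. These depend continuously on $a$ in $\norm{\cdot}_\infty$, and their slopes vary continuously.
\item Your first route yields only strong measurability: $\omega\mapsto L_{\omega,k}h\in W_r$ is measurable for each fixed $h$. That is a weaker statement.
\end{itemize}

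The paper's proof is essentially your first route, done more directly. Instead of a Borel stratification via difference quotients, it shows that when $\norm{T_1-T_0}_\infty$ is small, the finest Markov partitions pair up with the same combinatorics and with close slopes and offsets. So the combinatorial data are locally constant, and $T\mapsto L_{T,k}h$ is continuous into $L^1$ for continuous $h$. It is then continuous into $W_r$ via $(L_{T,k}h)^{(l)}=L_{T,k+l}h^{(l)}$. The paper then invokes Lemma~\ref{lemmesurabl} to upgrade strong measurability to norm-Borel measurability.

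That upgrade is exactly the step you declined to make, and you were right. The lemma only shows that preimages of balls are measurable, and this does not give Borel measurability in the non-separable space $L(W_r)$.

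Your tent family is in fact a counterexample to the statement:
\begin{itemize}
\item Fix $a\neq b$ and take a bump $h$ supported in $(y-\eta,y+\eta)\subset(0,\delta)$ with $\eta$ small. On $(0,1)$ the second branch contributes nothing, so $L_{a,k}h=a^kh(a\,\cdot)$ and $L_{b,k}h=b^kh(b\,\cdot)$.
\item These have disjoint supports, so $\norm{(L_{a,k}-L_{b,k})h}_r=\sum_{l\le r}(a^{k+l-1}+b^{k+l-1})\norm{h^{(l)}}_{L^1}$.
\item Hence $\norm{L_{a,k}-L_{b,k}}_{W_r\to W_r}\ge c_\delta>0$, uniformly in $a\neq b$.
\item So every subset of $\{L_{a,k}\}$ is closed in $L(W_r)$, and a non-measurable $E\subset(\delta,1-\delta)$ is the preimage of a closed set.
\end{itemize}

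So the norm-Borel version cannot be proved, and your fallback is the correct statement. It is also all that is needed to define $\cL_kh=\int L_{\omega,k}h\,d\bP$ as a Bochner integral in the separable space $W_r$. The bound $\norm{\cL_k}\le\int\norm{L_{\omega,k}}\,d\bP$ still holds, because $\omega\mapsto\norm{L_{\omega,k}}$ is a countable supremum over a dense set and hence measurable.
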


\begin{proof}
Fix $h\in C^r(\cI,\cI)$. We will first show the continuity of $T\in (C^\infty(\cI),\norm{\cdot}_\infty)\mapsto L_{T,k}h\in (W_r(\cI),\norm{\cdot}_{r(I)})$. We will then conclude using Lemma \ref{lem:equivalence}.

\textbf{Step 1, continuity of $T\in (C^\infty(\cI),\norm{\cdot}_\infty)\mapsto L_{T,k}h\in (L^1(\cI),\norm{\cdot}_{L^1(I)})$ for $h\in C^r(\cI)$:}\\
Set $T_0\in C^\infty(\cI)$, Without loss of generality one can associate to $T_0$ a Markov partition $\cI_0$ such that for any $J\in \cI_0$, $T_0(J)\in \cI$. Let $A:=\sup\{|H|,H\in \cI\}$ and\footnote{Here \(|H|\) denotes the diameter of any set \(H\).} fix $\ve\ll A$. Recall that from Formula \eqref{eq:transfer_operator}, the operator $L_{T_0,k}f$ can be expressed as
\begin{align*}
    L_{T_0,k} h(x)=\sum_{y\in T_0^{-1}(x)}\frac{h(y)}{|T_0'(y)|^k}=\sum_{\tilde I_i\in \cI} 1_{\tilde I_i}\sum_{I_j\in \cI_0}B_{0,k} [i,j]h\circ g_{0,j},
\end{align*}
where $g_{0,j}(x):=(T_0|_{I_j})^{-1}(x)=\lambda_{0,j}x+p_{0,j}$ for each $I_j\in \cI_0$. Now choose $T_1\in C^\infty(\cI)$ $\ve$-close to $T_0$ and with Markov partition $\cI_1$ (this Markov partition is again taken without loss of generality the finest possible). To prove that $L_{T_1,k}h$ is close to $L_{T_0,k} h$, it is enough to prove first, that the two partitions $\cI_0,\cI_1$ have the same number of elements, that all those elements can be paired into couples $(I_{0,j},I_{1,j})$ with $I_{0,j}\in \cI_0$ and $I_{1,j}\in \cI_1$ with each pairing being close for the Haussdorf distance $d(I_{0,j},I_{1,j}):=|I_{0,j}\Delta I_{1,j}|$. And second that the corresponding terms $\lambda_{i,j}$ and $p_{i,j}$ are also close.\\

Let $I_{0,j}\in \cI_0$ and $J\in \cI$ such that $T_0(I_{0,j})=J$. Let $x_0\in I_{0,j}$ be the middle point of $I_{0,j}$, Then by construction, $T_0(x_0)$ is the middle point of $J$ and since $\ve\ll |J|$, $x_0$ belongs to the interior of an interval that we index as $I_{1,j}$ such that $T_1(I_{1,j})=J$ (actually, because we took the Markov partition the finest possible, $x_0$ is also the middle point of that interval). We now prove that $d(I_{0,j},I_{1,j})$ shrinks with $\ve$. 

Let $\check x:=\inf I_{0,j}\wedge I_{1,j}$ and $\hat x:=\sup I_{0,j}\wedge I_{1,j}$. Notice that for all $i\in \{0,1\}$, $T_i(\check x)-T_i(\hat x)=\lambda_{i,j}(\check x-\hat x)$ and since $\norm{T_1-T_0}_\infty\leq \ve$,

\begin{align}
|T_1(\check x)-T_0(\hat x)|\leq \ve. 
\end{align}

Notice that for all $i\in \{0,1\}$ $|T_i(\check x)-\inf J|\leq \ve$ and $|T_i(\hat x)-\sup J|\leq \ve$.

\begin{align*}
|T_1(\hat x)-T_1(\check x)-T_0(\hat x)+T_0(\check x)|&\leq 2\ve \\
|\frac{T_0(\hat x)-T_0(\check x)}{T_1(\hat x)-T_1(\check x)}-1|&\leq \frac{2\ve}{T_1(\hat x)-T_1(\check x)} \\
|\frac{\lambda_{0,j}}{\lambda_{1,j}}-1|&\leq \frac{2\ve}{|J|-2\ve} \\
\end{align*}
Thus for $\ve$ small enough such that $|J|\gg \ve$ we already get that $\lambda_{1,j}$ must be close to $\lambda_{0,j}$ and we can prove that 
\begin{align*}
|I_{0,j}\Delta I_{1,j}|&=|I_{0,j}|+|I_{1,j}|-2(\hat x-\check x)\\
&\leq \frac{|J|}{\lambda_{0,j}}+\frac{|J|}{\lambda_{1,j}}-2\frac{|J|-2\ve}{\lambda_{0,j}}\\
&\leq \frac{|J|}{\lambda_{0,j}}\left(1+\frac{\lambda_{0,j}}{\lambda_{1,j}}-2\left(1-2\frac{\ve}{|J|}\right)\right)\\
&\leq  \frac{|J|}{\lambda_{0,j}}\left(2+\frac{2\ve}{|J|-2\ve}-2\left(1-2\frac{\ve}{|J|}\right)\right)\\
&\leq  \frac{|J|}{\lambda_{0,j}}\left(\frac{2\ve}{|J|-2\ve}+4\frac{\ve}{|J|}\right)\\
&\leq  \frac{|J|}{\lambda_{0,j}}\left(6\frac{\ve}{|J|-2\ve}\right)\\
&\leq  \frac{6\ve |I_{0,j}|}{|J-2\ve|}.
\end{align*}
Notice that each $I_{0,j}$ is matched to an element $I_{1,j}$. Now we argue that two consecutive elements $I_{0,j}$ and $I_{0,j+1}$ implies having consecutive $I_{1,j}$ and $I_{1,j+1}$
Indeed, the equation above implies that between two elements $I_{1,j}$ and $I_{1,j'}$ corresponding to  two consecutive elements $I_{0,j}$ and $I_{0,j+1}$ there is only room for an interval $I_1$ of size at most $\frac{4\ve}{A}$. Assume that there is such interval and denote by $J_1\in \tilde \cI$ the image $T_1(I_1)$. Let $x_j$ be the matching extremity between the intervals $I_{0,j}$ and $I_{0,j+1}$, then $x_j$ is in the interior of $I_1$ and thus  
\begin{align*}
|T_0(x_j^-)-T_0(x_j^+)|&=|T_0(x_j^-)-T_1(x_j^-)+T_1(x_j^+)-T_0(x_j^+)| \\
&\leq 2\ve.
\end{align*}
Since we have $A\gg\ve$, the inequality above is satisfied only if $T_0(x_j^-)=T_0(x_j^+)=T_0(x_j)$. But $T_1(x_j)$ is at distance at least $\frac{|J_1|}{2}$ of one of the extremity of $J_1$, one can assume without loss of generality that there is $x_1\in I_1$ with $|T_1(x_1)-T_1(x_j)|\geq \frac{|J_1|}{2}$ and $x_1\in I_{0,j}$, then recall that since $\lambda_{0,j}=\frac{|J|}{|I_{0,j}|}$,
\begin{align*}
    |T_0(x_j)-T_0(x_1)|\leq \frac{4\ve (|I_{0,j}|+|I_{1,j}|)}{|J|}\frac{|J|}{|I_{0,j}|}&\leq 4\ve (1+\frac{|I_{0,j}|+|I_{1,j}|}{|J|})\\
    &\leq  4\ve (1+\frac{2}{|J|}).
\end{align*}
Since $|J|\gg \ve$, we deduce by chain rule that
\begin{align*}
  |T_0(x_j)-T_1(x_j)|&= |T_0(x_j)-T_0(x_1)+T_0(x_1)-T_1(x_1)+T_1(x_1)-T_1(x_j)|\\
  &\geq \frac{|J_1|}{2}-\ve-4\ve (1+\frac{2}{|J|})\\
  &\gg \ve.
\end{align*}
This yields a contradiction with the fact that $\norm{T_0-T_1}_\infty\leq \ve$. Thus the sets $I_{1,j}$ enumerate the partition $\cI_1$. 

Finally, $p_{0,j}$ and $p_{1,j}$ are close : since $\norm{T_1-T_0}_\infty \leq \ve$,
\begin{align*}
    |T_1(\check x)-T_0(\check x)|\leq \ve\\
    |(\lambda_{0,j}-\lambda_{1,j})\check x +p_{1,j}-p_{0,j}|\leq \ve\\
    |(1-\frac{\lambda_{1,j}}{\lambda_{0,j}})\check x +\frac{p_{1,j}-p_{0,j}}{\lambda_{0,j}}|\leq \frac{\ve}{\lambda_{0,j}}\\
    |\frac{p_{1,j}-p_{0,j}}{\lambda_{0,j}}|-|\frac{\ve}{|J|}| \leq \frac{\ve}{\lambda_{0,j}}\\
    |p_{1,j}-p_{0,j}|-|\frac{\ve \lambda_{0,j}}{|J|}| \leq \ve\\
     |p_{1,j}-p_{0,j}|-|\frac{\ve }{|I_0|}|\leq \ve
\end{align*}
thus
\begin{align*}
     |p_{1,j}-p_{0,j}|\leq \frac{\ve }{|I_0|}+ \ve.
\end{align*}

Now collecting everything we can safely argue that for $\delta >0$, there is $\ve>0$ small enough such that for any index $i,j$, $B_{0,k}[i,j]=B_{1,k}[i,j]$ and 
\begin{align*}
   \int \left|L_{k,T_1}f-L_{k,T_0}f\right| dx &\leq \int  \sum_{\tilde I_i\in \cI} 1_{\tilde I_i}\sum_{I_j\in \cI_0}B_{0,k}[i,j]|h\circ g_{0,j}-h\circ g_{1,j}|dx\\
   &\leq   \sum_{\tilde I_i\in \cI}\sum_{I_j\in \cI_0}B_{0,k}[i,j] \int_{\tilde I_i}|h\circ g_{0,j}-h\circ g_{1,j}|dx\\
\end{align*}
Since $p_{i,j}$ and $\lambda_{i,j}$ are close to one another, the map $g_{0,j}$ and $g_{1,j}$ remain close, and since $h\in C^r\subset C^0$ the integral above remains close to $0$. So we proved the continuity of 
$T\in (C^\infty(\cI),\norm{\cdot}_\infty)\mapsto L_{T,k}h\in (L^1(\cI),\norm{\cdot}_{L^1})$.\\

\textbf{Step 2,  $T\mapsto L_{T,k}h\in (W_r,\norm{\cdot}_{r})$ is continuous for any $h\in W_{r+1}$:}\\
Now recall from Lemma \ref{lemlomega} that for $k\in \bN_0$ and $l\leq r$, 
\begin{align*}
(L_{T,k}h)^{(l)}=L_{T,k+l}(h^{(l)}),
\end{align*}
and since $h^{(l)}\in C^0$, the continuity of  $T\mapsto (L_{T,k}h)^{(l)}\in (L^1,\norm{\cdot}_{L^1})$ follows from Step 1.
Thus $T\mapsto L_{T,k}h\in (W_r,\norm{\cdot}_{r})$ is continuous.\\

\textbf{Step 3, $\omega \mapsto L_{\omega,k}\in L(W_r,\norm{\cdot}_{r})$ is measurable :}\\
According to Lemma \ref{lemmesurabl}, since $W_r$ is separable and one can make a dense family from elements of $C^r(\cI)$, one has that $\omega \mapsto L_{\omega,k}\in (L(W_r),\norm{\cdot}_{L(W_r)})$ is measurable.
\end{proof}

\textbf{Step 2: Spectral decomposition for the annealed operator $\cL_k$ :}

Having a spectral gap for $L_{\omega,k}$ on $W_r$ is not enough to get item~(\ref{item1:propgenquasi}) from Proposition \ref{propgenquasi}, since item~(\ref{item1:propgenquasi}) requires that the operator $N_{\omega,k}$ encompassing the essential spectrum in equation \eqref{eqthmegenquasizero} of Proposition~\ref{propgenquasi} to be a contraction. So, instead of using item~(\ref{item1:propgenquasi}) from Proposition~\ref{propgenquasi}, we directly prove the spectral decomposition of $\cL_k$ through studying Lasota-Yorke inequality on $\cL_k$ in Lemma~\ref{lem:LY} and applying Ionescu Tulcea Marinescu Theorem \cite{ITM1950} on it. We start with two technical lemma inspired from Lemma 2.7 and 2.9 from \cite{BKL22} but adapted to our settings.

\begin{lemma}\label{lemlomega}
Let $\lambda_\omega:=\min \{\lambda_{\omega,i},i\leq M(\omega)\}$ and $\Lambda_\omega:=\max\{\lambda_{\omega,i}, i\leq M(\omega)\}$.
For all $k,r\in \bN_0$, \(\omega\in \Omega\),
\begin{align}\label{eq:derivl}
(L_{\omega,k}h)^{(r)}=L_{\omega,k+r}(h^{(r)}).
\end{align}
 Furthermore, for any $k, r\in \bN_0$, there are constants $\Gamma_{\omega,0}:=\Lambda_{\omega}>0$ and $\Gamma_{\omega,k}=\lambda_\omega^{-(k-1)}$  such that the Lasota-Yorke inequality holds, that is
    \begin{align}\label{eqlylomega}
        \norm{L_{\omega,k}h}_r&\leq \Gamma_{\omega,k} \norm{h}_r\\
        \norm{L_{\omega,k}h}_r&\leq \lambda_\omega^{-(k+r-1)}\norm{h}_{r-1}+\Gamma_{\omega,0} \norm{h}_r
    \end{align}
     The first inequality also holds in the case \(r= 0\).
\end{lemma}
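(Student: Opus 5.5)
The plan is to prove the derivative identity \eqref{eq:derivl} first, by direct differentiation of the explicit formula \eqref{eq:transfer_operator}, and then to derive both Lasota--Yorke bounds from it together with an $L^1$ estimate for $L_{\omega,k}$.

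\textbf{Derivative identity.} On each $\tilde I_i\in\tilde\cI$ the operator is a finite sum
\begin{align*}
L_{\omega,k}h=\sum_{j}B_{\omega,k}[i,j]\,h\circ g_{\omega,j}.
\end{align*}
The inverse branches $g_{\omega,j}$ are affine with slope $\lambda_{\omega,j}^{-1}$ (up to sign). The Markov condition $\tilde I_i\subset T_\omega(I_{\omega,j})$ whenever $A_\omega[j,i]=1$ ensures that $g_{\omega,j}(\tilde I_i)$ lies inside a single element of $\cI_\omega$, hence inside a single element of $\tilde\cI$. So $h\circ g_{\omega,j}$ is in $W^{r,1}(\tilde I_i)$, and the chain rule for weak derivatives under affine changes of variable gives
\begin{align*}
(h\circ g_{\omega,j})^{(r)}=(\pm\lambda_{\omega,j}^{-1})^{r}\,h^{(r)}\circ g_{\omega,j}.
\end{align*}
Multiplying by $B_{\omega,k}[i,j]=\lambda_{\omega,j}^{-k}A_\omega[j,i]$ turns the weight into $\lambda_{\omega,j}^{-(k+r)}$, which is exactly $B_{\omega,k+r}[i,j]$. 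The sign from orientation-reversing branches must be absorbed; I will either assume $|T'_\omega|$ is used in the definition of $\lambda_{\omega,j}$ or note that for orientation-reversing branches the identity holds up to the sign $(\pm1)^r$. The sign is harmless for all norm estimates that follow.

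\textbf{$L^1$ bound.} Change variables $y=g_{\omega,j}(x)$, so $dx=\lambda_{\omega,j}\,dy$. This gives
\begin{align*}
\int |L_{\omega,m}f|\,dx\le\sum_j\lambda_{\omega,j}^{1-m}\int_{I_{\omega,j}}|f|\,dy\le \max_j\lambda_{\omega,j}^{1-m}\,\norm{f}_{L^1}.
\end{align*}
For $m\ge1$ the constant is at most $\lambda_\omega^{-(m-1)}$. For $m=0$ it is at most $\Lambda_\omega$.

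\textbf{Lasota--Yorke inequalities.} Summing over derivative orders $l\le r$ and using $(L_{\omega,k}h)^{(l)}=L_{\omega,k+l}h^{(l)}$:
\begin{itemize}
\item For the first inequality, each term is bounded by $\lambda_\omega^{-(k+l-1)}\norm{h^{(l)}}_{L^1}$ when $k+l\ge1$, and by $\Lambda_\omega$ when $k=l=0$. Since $\lambda_\omega>1$, these constants are dominated by $\Gamma_{\omega,k}$. The case $r=0$ is just the $L^1$ bound.
\item For the second inequality, split off the top order $l=r$ from the lower orders $l\le r-1$. The lower orders together are bounded by a constant times $\norm{h}_{r-1}$; the top order is handled through the $L^1$ bound.
\end{itemize}

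\textbf{Main obstacle.} The delicate point is bookkeeping of the exponents in the second inequality, so that the lower-order terms carry the stated factor $\lambda_\omega^{-(k+r-1)}$ and the top-order term carries $\Gamma_{\omega,0}=\Lambda_\omega$. I expect the stated constants to be somewhat loose, so the estimate will be arranged to produce at least those constants, using $\lambda_\omega^{-1}\le1\le\Lambda_\omega$ freely. If the literal lower-order factor does not come out, I would fall back on the form of the constants used in Lemma 2.9 of \cite{BKL22}.
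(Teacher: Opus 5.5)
Your argument is correct, but it takes a different route from the paper. The paper computes nothing: it uses Lemma~\ref{lemextpartition} to identify $\norm{\cdot}_{W_r(\tilde\cI)}$ with $\norm{\cdot}_{W_r(\cI_\omega)}$, and then imports Lemmas 2.7 and 2.9 of \cite{BKL22} for the single Markov map $T_\omega$ on its own partition $\cI_\omega$. The citation buys brevity and the constants of \cite{BKL22}. Your direct differentiation buys a self-contained proof, and it also makes explicit a point the reduction leaves implicit. Applied to $T_\omega$, \cite{BKL22} only places $L_{\omega,k}h$ in the larger space $W_r(\cI_\omega)$, where jumps at interior points of $\tilde I_i$ are allowed. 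Your observation that every branch with $A_\omega[j,i]=1$ covers all of $\tilde I_i$, so that $h\circ g_{\omega,j}\in W^{r,1}(\tilde I_i)$, is what shows that $L_{\omega,k}$ preserves $W_r(\tilde\cI)$. It also shows that \eqref{eq:derivl} holds for weak derivatives on the coarse partition. On signs: the factor $(\pm1)^r$ is branch-dependent, not global. With weights $|T_\omega'|^{-k}$ the identity therefore fails for odd $r$ as soon as some branch reverses orientation. It is exact for the matrix form with signed $\lambda_{\omega,j}=T_\omega'|_{I_{\omega,j}}$, and, as you say, the sign is irrelevant for the norm bounds.

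One correction to your last paragraph: the allocation you aim for in the second inequality is impossible. Take $r\ge1$, $k=1$ and $h\equiv 1$. The $l=0$ term alone is $\norm{L_{\omega,1}1}_{L^1}=1=\norm{h}_{r-1}$, which exceeds $\lambda_\omega^{-r}\norm{h}_{r-1}$. So the lower-order terms cannot carry the factor $\lambda_\omega^{-(k+r-1)}$. The inequality as literally stated is still true, but only trivially. Since $\Gamma_{\omega,k}\le\Gamma_{\omega,0}=\Lambda_\omega$ for every $k$, it follows from the first inequality, and the $\norm{h}_{r-1}$ term plays no role. The meaningful Lasota--Yorke bound has the two norms the other way round; it is the one actually produced in the proof of Lemma~\ref{lem:LY}:
\[
\norm{L_{\omega,k}h}_r\le \lambda_\omega^{-(k+r-1)}\norm{h}_r+\Gamma_{\omega,k}\norm{h}_{r-1}.
\]
Your split proves exactly this. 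The top order $l=r$ goes through the $L^1$ bound with constant $\lambda_\omega^{-(k+r-1)}$, and the orders $l\le r-1$ go through with constant $\Gamma_{\omega,k}$, using $\Lambda_\omega$ for $k=0$. So you do not need to fall back on \cite{BKL22}.
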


\begin{proof}
According to Lemma \ref{lemextpartition}, for $h\in W_r(\tilde \cI)\subset W_r(\cI_\omega)$ and Lemma 2.7 from \cite{BKL22} applies proving \eqref{eq:derivl}. 
As for inequality \eqref{eqlylomega}, we deduce from Lemma \ref{lemextpartition} and Lemma 2.9 in \cite{BKL22},
\begin{align*}
 \norm{L_{\omega,k}h}_r&\leq \norm{L_{\omega,k}h}_{W_r(\cI_\omega)}\\
 &\leq \Gamma_{\omega,k} \norm{h}_{W_r(\cI_\omega)}\\
  &\leq \Gamma_{\omega,k} \norm{h}_r,
\end{align*}
and the same holds for the Lasota-Yorke inequality :
\begin{align*}
    \norm{L_{\omega,k}h}_r&=\norm{L_{\omega,k}h}_{W_r(\cI_\omega)}\\
    &\leq \lambda_\omega^{-(k+r-1)}\norm{h}_{W_{r-1}(\cI_\omega)}+\Gamma_{\omega,0} \norm{h}_{W_r(\cI_\omega)}\\
    &\leq \lambda_\omega^{-(k+r-1)}\norm{h}_{r-1}+\Gamma_{\omega,0} \norm{h}_r.
\end{align*}

\end{proof}

We can now lift the previous Lemma to the annealed operator.

\begin{lemma}\label{lem:to_deriv}
    For all \(k,r\in\bN_0\), \(h\in W_r\) and \(l\in\{0,\ldots,r\}\),
    \[
    (\cL_k h)^{(l)} = \cL_{k+l} h^{(l)}.
    \]
\end{lemma}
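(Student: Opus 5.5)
We will deduce the statement from the fiberwise identity \eqref{eq:derivl} of Lemma \ref{lemlomega}, namely $(L_{\omega,k}h)^{(l)}=L_{\omega,k+l}(h^{(l)})$ for every $\omega\in\Omega$. To do so, we will show that the weak derivative (restricted to the elements of $\tilde\cI$) commutes with the Bochner integral $\cL_k=\int_\Omega L_{\omega,k}\,d\bP(\omega)$.

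First we check that both sides are well defined. By Lemma \ref{lemlomega}, $\norm{L_{\omega,k}}_{L(W_r)}\leq \Gamma_{\omega,k}$, with $\Gamma_{\omega,k}=\lambda_\omega^{-(k-1)}\leq \max(1,\lambda^{-(k-1)})$ for $k\geq1$ and $\Gamma_{\omega,0}=\Lambda_\omega\leq\norm{T_\omega'}_\infty$. These bounds are integrable by the hypothesis $\Lambda<\infty$ and the uniform expansion $\lambda\leq|T_\omega'|$. By Proposition \ref{prop:measop}, $\omega\mapsto L_{\omega,k}$ is measurable in $L(W_r)$. Therefore $\cL_k h=\int L_{\omega,k}h\,d\bP$ exists as a Bochner integral in $W_r$, and $\cL_{k+l}h^{(l)}$ exists as a Bochner integral in $W_{r-l}\subset L^1$.

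Next we commute the derivative with the integral. For each $l\leq r$, the map $D^l: W_r\to L^1(I)$, $h\mapsto h^{(l)}$ (the piecewise weak derivative on $\tilde\cI$), is a bounded linear operator, since $\norm{h^{(l)}}_{L^1}\leq\norm{h}_r$. A bounded linear operator commutes with Bochner integrals. Hence
\begin{align*}
(\cL_k h)^{(l)}=D^l\int_\Omega L_{\omega,k}h\,d\bP(\omega)=\int_\Omega (L_{\omega,k}h)^{(l)}\,d\bP(\omega)=\int_\Omega L_{\omega,k+l}h^{(l)}\,d\bP(\omega)=\cL_{k+l}h^{(l)},
\end{align*}
where the third equality uses \eqref{eq:derivl} pointwise in $\omega$. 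The last integral is computed in $L^1$, and it agrees with the one computed in $W_{r-l}$ because the inclusion $W_{r-l}\hookrightarrow L^1$ is also bounded.

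If one prefers to avoid Bochner integrals and work with the weak (Pettis-type) definition from Section \ref{subseciidintro}, there is an equivalent route. Pair both sides with a test function $\psi\in C_c^\infty(\tilde I_i)$ and use Fubini:
\begin{align*}
\int (\cL_kh)\,\psi^{(l)}\,dx=\int_\Omega\int L_{\omega,k}h\,\psi^{(l)}\,dx\,d\bP .
\end{align*}
Integrating by parts fiberwise and applying \eqref{eq:derivl} gives $(-1)^l\int\cL_{k+l}h^{(l)}\,\psi\,dx$. This is exactly the definition of the weak derivative on each $\tilde I_i$.

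The main obstacle is justifying this interchange, either through Fubini or through boundedness of $D^l$. The key input is the integrability $\int\norm{L_{\omega,k}}_{L(W_r)}d\bP<\infty$, and the delicate case is $k=0$, where the bound $\Lambda_\omega$ requires $\Lambda<\infty$. Joint measurability of $(\omega,x)\mapsto L_{\omega,k}h(x)$ comes from the measurability in Proposition \ref{prop:measop}.
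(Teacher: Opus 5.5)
Your proposal is correct and follows essentially the paper's route: the paper likewise uses the continuity of the derivative operator $D:W_r\to W_{r-1}$ to commute it with the annealed integral, and then applies the fiberwise identity \eqref{eq:derivl} from Lemma \ref{lemlomega}. The only difference is that the paper proves this commutation by hand, approximating the integral by the finite sums of \eqref{eq:intosum} and iterating in $l$, whereas you invoke the standard fact that bounded operators commute with Bochner integrals and apply $D^l:W_r\to L^1$ in one step.
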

\begin{proof}
    Fix \(k,r\in\bN_0\), without loss of generality, we assume $h\in W_r(\tilde \cI)$ with $\norm{h}_{W_r}=1$. The claimed equality holds trivially in the case \(l = 0\).
 Thanks to Lemma~\ref{lemlomega}, for any $\omega \in \Omega$ and any $l\leq r$, 
    \begin{equation}\label{eq:L_deriv}
        (L_{\omega,k}h)^{(l)}=L_{\omega,k+l}(h^{(l)}).
    \end{equation}
    
    Consider the continuous derivative operator $D : W_r(\tilde \cI) \to W_{r-1}(\tilde \cI)$, then we can make use of relation \eqref{eq:intosum} from the proof of Proposition \ref{propgenquasi} and its associated notations:

\begin{align*}
    \norm{D \left(\int L_{\omega,k} h d\bP\right)- \int D(L_{\omega,k} h) d\bP}_{k-1}&=\norm{D \left(\int L_{\omega,k} h d\bP\right) -\sum_i D L_i(h) \mu(F_i))}_{k-1}\\
    &+\norm{ \int DL_{\omega,k} h d\bP -\sum_i DL_i(h) \mu(F_i))}_{k-1}\\
    &\leq \norm{D}_{W_k\to W_{k-1}}\norm{ \int L_{\omega,k} h d\bP -\sum_i L_ih \mu(F_i))}_{k}+2\ve\\
    &\leq 4\ve.
\end{align*}
Where the inequality $\norm{ \int DL_{\omega,k} h d\bP -\sum_i DL_i(h) \mu(F_i))}_{k-1}\leq 2\ve \norm{Dh}_{k-1}\leq 2\ve$ is a consequence of relation \eqref{eq:intosum} and the fact that 
$\int_{X^{-1}(K_\ve^c)} \|L_{\omega,k}\|_rd\bP(\omega)\leq \ve$.
Thus,
\begin{equation}
    D(\cL_k h)=\cL_k(Dh).
\end{equation}\label{eq:comderiv}
    Consequently, using equation \eqref{eq:L_deriv}, we get for any \(l\in\bN_0\), 
    \[
    (\cL_k h)^{(l)} = \cL_{k+l}(h^{(l)}).
    \]
\end{proof}

We can now adapt the Lasota Yorke inequality from Lemma 2.9 from \cite{BKL22} to the operator $\cL_k$.
\begin{lemma}\label{lem:LY}
There are constant $\Gamma_0:=\int_\Omega \Lambda_\omega d\bP(\omega)>0$ and $\Gamma_k=\lambda^{-(k-1)}$ (Recall that $\lambda:=\inf_{\omega\in \Omega}\lambda_\omega >0$ and $\Lambda_\omega:=\norm{T_\omega'}_\infty$) such that the Lasota-Yorke inequality holds,
    \begin{align}
        \norm{\cL_kh}_r&\leq \Gamma_k \norm{h}_r \label{eqlem:LY1}\\
        \norm{\cL_kh}_r&\leq \lambda^{-(k+r-1)}\norm{h}_{r-1}+\Gamma_0 \norm{h}_r
    \end{align}
     The first inequality also holds in the case \(r= 0\).
\end{lemma}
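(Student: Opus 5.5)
The plan is to obtain both inequalities fiberwise from Lemma~\ref{lemlomega} and then integrate them in $\omega$, using the Bochner-type bound $\norm{\int L_\omega\, d\bP}\leq \int \norm{L_\omega}\, d\bP$ recalled in Section~\ref{subseciidintro}. Measurability of $\omega\mapsto L_{\omega,k}\in L(W_r)$ is supplied by Proposition~\ref{prop:measop}. For $h\in W_r$ fixed, the map $\omega\mapsto L_{\omega,k}h\in W_r$ is then measurable, so $\omega\mapsto\norm{L_{\omega,k}h}_r$ is measurable, and the integral defining $\cL_k h$ makes sense in $W_r$ (as in the proof of Proposition~\ref{propgenquasi}, Section~\ref{sec:classic}).

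The first step is to get the triangle inequality $\norm{\cL_k h}_r\leq \int_\Omega \norm{L_{\omega,k}h}_r\, d\bP(\omega)$. Rather than relying on reflexivity (since $W_r$ is an $L^1$-type space and is not reflexive), I would argue directly.
\begin{itemize}
\item Lemma~\ref{lem:to_deriv} gives $(\cL_k h)^{(l)}=\cL_{k+l}h^{(l)}=\int L_{\omega,k+l}h^{(l)}\,d\bP$.
\item Since $\norm{\cdot}_r=\sum_{l\leq r}\norm{(\cdot)^{(l)}}_{L^1}$, it suffices to have $\norm{\int g_\omega\, d\bP}_{L^1}\leq\int\norm{g_\omega}_{L^1}\,d\bP$.
\item That last inequality follows from Fubini/Tonelli applied to $\int_I\bigl|\int_\Omega g_\omega(x)\,d\bP\bigr|\,dx$.
\end{itemize}
Summing over $l$ and using \eqref{eq:derivl} to recombine then gives $\norm{\cL_kh}_r\leq\int\norm{L_{\omega,k}h}_r\,d\bP$.

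Next I plug in Lemma~\ref{lemlomega}. For the first inequality, $\norm{L_{\omega,k}h}_r\leq\Gamma_{\omega,k}\norm{h}_r$. For $k\geq1$ we have $\Gamma_{\omega,k}=\lambda_\omega^{-(k-1)}\leq\lambda^{-(k-1)}$ because $\lambda_\omega\geq\lambda>1$, and integrating gives $\Gamma_k$. For $k=0$ the constant is $\Lambda_\omega$, which integrates to $\Gamma_0=\int\Lambda_\omega\,d\bP\leq\Lambda<\infty$.

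For the Lasota–Yorke inequality I integrate
\[
\norm{L_{\omega,k}h}_r\leq \lambda_\omega^{-(k+r-1)}\norm{h}_{r-1}+\Lambda_\omega\norm{h}_r .
\]
I then bound $\lambda_\omega^{-(k+r-1)}\leq\lambda^{-(k+r-1)}$, valid since $k+r-1\geq0$ when $r\geq1$, and $\int\Lambda_\omega\,d\bP=\Gamma_0$. The case $r=0$ of the first inequality is the plain $L^1$ estimate with the same Fubini argument.

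The main obstacle is justifying that the vector integral is compatible with the norm and with weak derivatives, that is, the first step. It has to hold without reflexivity and for every $h\in W_r$, not only for $h\in C^r$. I would handle it as follows. First, prove the Fubini identity $\cL_kh(x)=\int L_{\omega,k}h(x)\,d\bP$ pointwise a.e., which requires joint measurability of $(\omega,x)\mapsto L_{\omega,k}h(x)$ (take $h\in C^r$ first, then pass to general $h$ by density). Second, extend the resulting bounds to all of $W_r$ by continuity, using the integrability $\int\norm{L_{\omega,k}}_{L(W_r)}\,d\bP\leq\Gamma_0+1<\infty$ that Lemma~\ref{lemlomega} provides.
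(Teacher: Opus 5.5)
Your proof is correct and is essentially the paper's argument. Both use Lemma~\ref{lem:to_deriv} and Fubini to move the absolute value inside the $\omega$-integral; you package this as $\norm{\cL_k h}_r\le\int_\Omega\norm{L_{\omega,k}h}_r\,d\bP$ and then quote the fiberwise bounds of Lemma~\ref{lemlomega}, whereas the paper redoes those bounds inline via $|L_{\omega,k+l}g|\le\lambda^{-(k+l-1)}L_{\omega,1}|g|$ and $\int L_{\omega,1}|g|\,dx=\int|g|\,dx$.

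One remark: as displayed, the second inequality already follows from the first (since $\Gamma_k\le 1<\Gamma_0$ for $k\ge 1$, and trivially for $k=0$). The paper's computation actually yields, and its later essential spectral radius bound needs, the version with $\norm{h}_r$ and $\norm{h}_{r-1}$ exchanged. Your averaging scheme gives that version too if you integrate the fiberwise bound $\norm{L_{\omega,k}h}_r\le\lambda_\omega^{-(k+r-1)}\norm{h}_r+\Lambda_\omega\norm{h}_{r-1}$, which follows from \eqref{eq:derivl} and the fiberwise $L^1$ estimates.
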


\begin{proof}
We first consider the case when \(k\in\bN\). Let \(h\in W_r\), by definition of \(\norm{\cdot}_r\), Lemma~\ref{lem:to_deriv}, and the fact that $\lambda\leq |T_\omega'(x)|$ for all $\omega\in \Omega$ and $x\in I$,
\begin{align}
    \norm{\cL_k h}_r &= \sum_{l=0}^r \int_{\tilde \cI} |(\cL_k h)^{(l)}(x)|\ dx =                           \sum_{l=0}^r \int_{\tilde \cI} |\cL_{k+l} h^{(l)}(x)|\ dx\nonumber\\
                     &= \sum_{l=0}^r \int_{\tilde \cI} \left|\int_\Omega L_{\omega,k+l} h^{(l)}(x)\ d\bP(\omega)\right|\ dx\nonumber\\
                     &\leq \sum_{l=0}^r \int_{\tilde \cI} \left(\int_\Omega |L_{\omega,k+l} h^{(l)}(x)|\ d\bP(\omega)\right)\ dx\nonumber\\
                     &= \sum_{l=0}^r \int_{\Omega} \int_{\tilde \cI} |L_{\omega,k+l} h^{(l)}(x)|\ dx\ d\bP(\omega)\nonumber\\
                      &\leq \sum_{l=0}^r \int_{\Omega} \int_{\tilde \cI} |L_{\omega,1} \frac{h^{(l)}(x)}{|T_\omega'(x)|^{k+l-1}}|\ dx\ d\bP(\omega)\nonumber\\
                     &\leq \sum_{l=0}^r \lambda^{-(k+l-1)} \int_{\Omega} \int_{\tilde \cI} L_{\omega,1} |h^{(l)}(x)|\ dx\ d\bP(\omega)\label{eq:lypk}
\end{align}
    By change of variables, \(\int_{\tilde \cI} L_{\omega, 1}|h^{(l)}(x)|\ dx = \int_{\tilde \cI} |h^{(l)}(x)|\ dx\), thus for all \(r\in\bN_0\),
\begin{equation*}
    \norm{L_{\omega,k} h}_r \leq  \lambda^{-(k-1)} \sum_{l=0}^r\int_{\tilde \cI} |h^{(l)}(x)|\ dx.
\end{equation*}
    For \(\Gamma_k = \lambda^{-(k-1)}\), we have the first inequality.
    When \(r\geq 1\), equation \eqref{eq:lypk} implies that 
\begin{align*}
    \norm{L_{\omega,k} h}_r & \leq \lambda^{-(k+r-1)} \int_{\tilde \cI} |h^{(r)}(x)|\ dx +          \lambda^{-(k-1)} \sum_{l=0}^{r-1}\int_{\tilde \cI} |h^{(l)}(x)|\ dx\\
                     & \leq \lambda^{-(k+r-1)}\norm{h}_r + \lambda^{-(k-1)} \norm{h}_{r-1},
\end{align*}    
    and hence the second estimate.                                                                                                   

    Now we consider the case when \(k=0\). Note that for any \(h\in W_r\),
   \begin{align*}
       \int_{\tilde \cI} |\cL_0  h|(x)\ dx &= \int_{\tilde \cI} |\int_\Omega L_{\omega,0} h (x) d\bP(\omega)| dx\\
                                  &= \int_{\tilde \cI} |\int_\Omega L_{\omega,1}T_\omega' h (x) d\bP(\omega)| dx\\
                                  & \leq \int_{\tilde \cI} \int_\Omega L_{\omega,1}|T_\omega'|\ | h| (x) d\bP(\omega) dx\\
                                  &\leq   \int_\Omega  \Lambda_{\omega} \int_{\tilde \cI} L_{\omega,1} | h| (x)  dx d\bP(\omega)\\
                                   &\leq \int_\Omega \Lambda_{\omega}\int_{\tilde \cI}  | h| (x) dx d\bP(\omega)\\
                                  &\leq \int_\Omega \Lambda_{\omega}d\bP(\omega)\int_{\tilde \cI} |  h|(x)\ dx\\
                                   &\leq \Gamma_0\int_{\tilde \cI} |  h|(x)\ dx.
   \end{align*}
    Now using Lemma~\ref{lem:to_deriv}, we get
    \begin{align*}
        \norm{\cL_0h}_r = \sum_{l=0}^r\int_{\tilde \cI}|(\cL_0h)^{(l)}(x)|\ dx 
                        = \sum_{l=0}^r\int_{\tilde \cI}|\cL_l h^{(l)}(x)|\ dx. 
    \end{align*}
    According to \eqref{eq:lypk}, for all \(r\in\bN_0\),
    \[
    \norm{\cL_0h}_r\leq \sum_{l=0}^r \lambda^{-l}\int_{\tilde \cI}|\cL_0h^{(l)}(x)|\ dx\leq  \Gamma_0 \sum_{l=0}^r\int_\cI|h^{(l)}(x)|\ dx
    \]
    which proves the first inequality. Now, for \(r\geq 1\),

    \begin{align*}
        \norm{\cL_0h}_r & = \int_{\tilde \cI}|(\cL_0h)^{(r)}(x)|\ dx + \sum_{l=0}^{r-1} \int_\cI|(\cL_0h)^{(l)}(x)|\ dx\\
                        & \leq \int_{\tilde \cI}|\cL_r h^{(r)}(x)|\ dx + \sum_{l=0}^{r-1} \int_\cI|\cL_l h^{(l)}(x)|\ dx\\
                        & \leq \lambda^{-(r-1)} \int_{\tilde \cI}|\cL_1 h^{(r)}(x)|\ dx + \norm{\cL_0h}_{r-1}\\
                        &\leq \lambda^{-(r-1)} \int_{\tilde \cI}| h^{(r)}(x)|\ dx + \Gamma_0 \norm{h}_{r-1}
    \end{align*}

\end{proof}

We are now able to complete the proof of Theorem~\ref{thm:proj} using item~(\ref{item2:propgenquasi}) from Proposition~\ref{propgenquasi}.
    The first inequality of Lemma~\ref{lem:LY} helps deducing that the spectral radius of the operator \(\cL_k\) is bounded by \(\Gamma_k\). Additionally, using the compact injection of Sobolev spaces \(W_r\) inside \(W_{r-1}\) for all \(r\in\bN\) and the Hennion-Nussbaum theory \cite{DKL2021}, we get that the Lasota-Yorke inequalities given by Lemma~\ref{lem:LY} implies that the essential spectral radius of \(\cL_k\) is bounded by \(\lambda^{-(k+r-1)}\).
Thus we obtain a spectral decomposition of $\cL_k : W_r\to W_r$ as 
\begin{align*}
    \cL_k:=\Pi_k+N_k,
\end{align*}
where $\Pi_k$ a finite rank operator and $N_k$ a linear operator such that there is $C_k>0$, for all $n\in \bN$, $\|N_k^n\|\leq C_k\lambda^{-(k+r-1)n}$. Thus the remaining of the proof consists of proving that the required item~(\ref{item2:propgenquasi}) in Proposition~\ref{propgenquasi} holds in our case. 

\begin{proof}[Proof of Theorem~\ref{thm:proj}]
Let $k,r\in \bN$, by Lemma \ref{lem:LY}, we proved that $\cL_k$ has essential spectral radius no greater than $\lambda^{-(k+r-1)}$. Now, for \(\delta>0\) arbitrarily small, we can focus on describing the point spectrum that lies in 
$$
\cH_{k,r,\delta}:=\{\nu\in \sigma_{W_r}(\cL_k),|\nu|>\lambda^{-(k+r-1)}+\delta\}
$$ 
and its associated eigenspaces.
Let $\nu \in \cH_{k,r,\delta}$,
    recall from Lemma \ref{lem:to_deriv} that the derivative $D: h\in W_{r}\mapsto h'\in W_{r-1}$ follows the following semi conjugacy,
    $$
    D^l\circ (\cL_k -\nu)^m h= (\cL_{k+l} -\nu)^m\circ D^l h,
    $$
for $\nu\in \bC$, the associated eigenspace $E_k(\nu)$ for the eigenvalue $\nu$ of $\cL_k$ is linked to $E_{k+l}(\nu)$ as follows,
\begin{equation}\label{eq:deriv}
    D^lE_k(\nu)\subset E_{k+l}(\nu).
\end{equation}
  
Thus for any $l\in \bN$, $E_k(\nu) \not\subset  \fB_l(\tilde \cI)$ iff\footnote{recall that \(\fB_r(\tilde \cI)\) is  the space of piecewise polynomial functions of degree \(r\) on each interval $I_i\in \tilde \cI$ which is invariant by $\cL_k$ as a direct application of its expression \eqref{eq:transfer_operator}.} $E_{k+l}(\nu)\neq \{0\}$. Now notice from  equation \eqref{eqlem:LY1} of Lemma \ref{lem:LY} that $E_{k+l}(\nu)\neq \{0\}$ only if $\nu\leq \lambda^{-(k+l)}$. Thus for any $\nu \in \cH_{k,r,\delta}$, $E_{k+r}(\nu)=\{0\}$ and thus $E_{k}(\nu)\subset \cB_{r}(\tilde \cI)$.
Then if we fix $\delta:= \lambda^{-(k+r)}$, we obtain the same decomposition as \eqref{eqthmegenquasi} from Proposition \ref{propgenquasi},
\begin{align}\label{eq:Lquasicon}
    \cL_k=\sum_{\nu \in  \cH_{k,r,\delta}}\cL_k\circ\Pi_\nu + N_k,
\end{align}
with $N_{k}:=\cL_k\circ (Id-\Pi_{k})$ satisfying, for some $c_{k,r}>0$ and all $n\in \bN$, $\norm{N_{k}^n}_{r}\leq 2c_{k,r}\lambda^{-n(k+r-1)}$. Furthermore we obtained for any $r\in \bN$ that $E_k(\nu)\subset C^\infty(\tilde \cI,\bR)\subset \bigcap_{r}W_r(\tilde \cI)$. Thus the required assumptions of the second point of Theorem \ref{eqthmegenquasi} are fulfilled and according to it, we obtain the following decorrelation property : for any $\ve>0$, there exists \(r\in\bN\) large enough such that for any $k,i,j\in \bN$ there is some bilinear map $C_{i,j,k} : W_r(\tilde \cI)\times W_r(\tilde \cI)\mapsto \mathbb{C}$ such that for any $\phi,\psi\in C^\infty(\tilde \cI,\bR)$,
\begin{align}
\int \cL_k^n \phi \cdot \psi d\mu = \sum\limits_{|\lambda_i|\geq\ve}\sum\limits_{j\leq N_i}\lambda_i^n n^j C_{i,j,k}(\phi,\psi) + o(\ve^n),       
\end{align}
concluding the result.
\end{proof}

As a result from the proof of Theorem \ref{thm:proj} recall that for any $\nu \in \cH_{k,r,\delta}$, $E_k(\nu)\subset \fB_r(\tilde \cI)$. And since $\fB_r(\tilde \cI)$ is invariant by $\cL_k$,
one can construct the Jordan form of the annealed operator $\cL_k$ acting on the finite dimensional space $\fB_r(\tilde \cI)$ and thus read the main eigenvalues of the discrete spectrum above the radius $\lambda^{-(k+r-1)}$ on it.  
We will now present rigorously how this Jordan form is expressed according to the Jordan form of the matrices $B_k$ introduced in equation \eqref{eq:bkformula}.

We construct a basis of monomials of $\fB_r(\tilde \cI)$ and identify  $\fB_r(\tilde \cI)$ with  \((\bR^{N})^{(r+1)}\) where $N:=\# \tilde \cI$ as follows.
For $(a^0,a^1,\ldots , a^r) \in (\bR^{N})^{(r+1)}$ when denoting $a^{l} = (a_{1}^{l},a_{2}^{l},\ldots,a_{N}^{l})$ for each $l$,
let
$\cJ : \bR^{N(r+1)} \to \fB_r(\tilde \cI)$,
\begin{align}
    \cJ(a^0,\ldots , a^r): x
    \mapsto
    \sum_{l=0}^{r}  x^{l} \sum_{j=1}^{N}  a^l_j 1_{I_j}(x).
\end{align}
Such a map $\cJ$ is an isomorphism between  $\fB_r(\tilde \cI)$ and \((\bR^{N})^{(r+1)}\). We also recall from \eqref{eq:transfer_operator} that for $h\in W(\tilde \cI)$, $ L_{\omega,k} (h)=\sum_{1\leq i \leq n} 1_{I_i}\sum_{1\leq j\leq M(\omega)}B_{\omega,k}[i,j] h\circ g_{j,\omega}$ where $g_{j,\omega}:=(T_{\omega}|_{I_J})^{-1}$ is an affine map over $T_\omega(I_j)$ and thus can be written as $g_{j,\omega}(x)=\lambda_{\omega,j}^{-1}(x-p_{\omega,j})$, thus for $\mathbf{a}:=(0,\dots,0,a^l,0,\dots,0)$ and $x\in I$

\begin{align}
        \begin{aligned}
            L_{\omega,k} (\cJ(\mathbf{a}))(x)
             & =\sum_{1\leq i\leq N}\sum_{1\leq j\leq M(\omega)} 1_{I_i}(x) B_{\omega,k} [i,j]a_j^l \left(\lambda_{\omega,j}^{-1}(x-p_{\omega,j})\right)^{l}
            \\
             & =\sum_{1\leq i\leq N}\sum_{1\leq j\leq M(\omega)} 1_{I_i}(x) B_{\omega,k+l} [i,j]a_j^l x^{l} + \rho_{\omega}(x),\label{eq:Lkxa}
        \end{aligned}
\end{align}
with $\rho_{\omega}\in \fB_{l-1}(I)$ ($\rho_\omega$ is the rest in the expansion in $\lambda_{\omega,j}^{-1}(x-p_{\omega,j})$).

Now to express $\cL_k$ on $\fB_r(\tilde \cI)$, we need to aggregate the different matrices $B_{\omega,k}$ involved. These matrices having different number of columns, one can aggregate them as follows using  the fact that for each $\omega\in \Omega$, $\cI\preceq \cI_\omega$ :
\begin{align}
        \label{eq:Lkxa}
        \begin{aligned}
            L_{\omega,k} (\cJ(\mathbf{a}))(x)
             & =\sum_{1\leq i\leq N}\sum_{1\leq j\leq M(\omega)} 1_{I_i}(x) B_{\omega,k+l} [i,j]a_j^l x^{l} + \rho_{\omega}(x)
            \\
             & =\sum_{1\leq i\leq N}\sum_{1\leq j\leq N}\sum_{\{m,I_{\omega,m}\subset I_j\}} 1_{I_i}(x) B_{\omega,k+l} [i,m]a_j^l x^{l} + \rho_{\omega}(x).
        \end{aligned}
\end{align}

Thus taking the integral, we obtain for $\cL_k$,
\begin{align}
    \cL_k (\cJ(\mathbf{a}))(x)&=\int_\Omega L_{\omega,k}(\cJ(\mathbf{a}))(x) d\bP(\omega)\\
     & =\sum_{i,j} \int_\Omega 1_{I_i}(x) \sum_{\{m,I_{\omega,m}\subset I_j\}}B_{\omega,k+l} [i,m]a_j^l x^{l} d\bP(\omega) + \int_\Omega \rho_{\omega}(x)d\bP(\omega)\notag\\
     &=\sum_{i,j} 1_{I_i}(x) a_j^l x^{l} \int_\Omega \sum_{\{m,I_{\omega,m}\subset I_j\}} B_{\omega,k+l} [i,m] d\bP(\omega) + \int_\Omega \rho_{\omega}(x)d\bP(\omega) \label{eq:ann:Lkxa}.
\end{align}
  In other words $ \cL_k (\cJ(\mathbf{a}))(x)=\cJ\left(\mathbf{b} \right)+\int_\Omega \rho_{\omega}(x)d\bP(\omega)$ where $\mathbf{b}:=(0,\cdots,\left(B_{k+l}a^l \right),\cdots,0)$ with $B_{k+l}$ being the $N\times N$ matrix given by 
  \begin{align}\label{eqtransmat}
  B_{k+l}[i,j]:=\int \sum_{\{m,I_{\omega,m}\subset I_j\}}B_{\omega,k+l}[i,m] d\bP(\omega)    
  \end{align}
   Thus the expression of $\cL_k$ in the basis of $\fB_r(\tilde \cI)$ we introduced, is a matrix $\cT_k:= \cJ^{-1}\circ\cL_k\circ \cJ$ which is lower triangular with blocks of the form $B_{k+l}$ on the diagonal.  
  the corresponding matrix on that basis is lower triangular with diagonal blocks corresponding to the matrix $B_{k+l}$ as follows :
\begin{align}\label{eq:matrixrep}
        \cT_{k,r} = 
        \begin{pmatrix}
            B_{k}   &         &        & 0       \\
            F_{1,0} & B_{k+1} &        &         \\
            \vdots  & \vdots  & \ddots &         \\
            F_{r,0} & F_{r,1} & \ldots & B_{k+r}
        \end{pmatrix}.
\end{align}

Such matrix can thus be reduced into a Jordan form $\cF_{k,r}:=P_{k,r}^{-1}\cT_{k,r}P_{k,r}$ (with $P_{k,r}$ the transition matrix) whose eigenvalues lie in the set of eigenvalues of the diagonal blocks $B_{k+l}$. This decomposition thus generates the Ruelle-Pollicott resonances.\\

\begin{corollary}
    Under the condition of Theorem \ref{thm:proj}, the random family of dynamical systems $(T_\omega)_{\omega\in\Omega}$ is mixing and admits a stationary measure.
\end{corollary}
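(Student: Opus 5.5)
The plan is to read the stationary measure and mixing off the finite-dimensional block structure of $\cL_1$ (the physically relevant annealed transfer operator, $k=1$) acting on $W_r(\tilde\cI)$, using the spectral decomposition obtained in the proof of Theorem~\ref{thm:proj}.

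\textbf{Step 1: positivity and conservation of mass.} First, $\cL_1$ is a positive operator, since each $L_{\omega,1}$ is. It also preserves integrals, because $\int L_{\omega,1}h\,dx=\int h\,dx$ for every $\omega$, so $\cL_1^*\mathrm{Leb}=\mathrm{Leb}$. Hence $1\in\sigma(\cL_1^*)$. Combined with the bound $\Gamma_1=1$ from Lemma~\ref{lem:LY}, this gives spectral radius exactly $1$. Because the essential spectral radius is at most $\lambda^{-r}<1$, the value $1$ is an isolated eigenvalue of $\cL_1$ on $W_r$.

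\textbf{Step 2: the stationary density.} By the proof of Theorem~\ref{thm:proj}, the generalized eigenspaces for eigenvalues of modulus $>\lambda^{-r}$ lie in $\fB_r(\tilde\cI)$. On that space $\cL_1$ is represented by the block triangular matrix $\cT_{1,r}$ of \eqref{eq:matrixrep}. For $l\ge1$ the diagonal blocks $B_{1+l}$ are averages of $B_{\omega,1+l}$. The columns of $B_{\omega,1}$ (row sums of the adjacency structure weighted by $\lambda_{\omega,j}^{-1}$) have $\ell^1$ norm at most $1$, by the Markov property. Therefore $B_{1+l}$ has spectral radius at most $\lambda^{-l}<1$, and every peripheral eigenvalue comes from the block $B_1$. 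The matrix $B_1$ is nonnegative and column stochastic with respect to interval lengths: it is the matrix of $\cL_1$ on piecewise constant functions on $\tilde\cI$, which is an invariant subspace. Perron--Frobenius gives a nonnegative eigenvector $v$ with $B_1v=v$. Its image $h=\cJ(v,0,\dots,0)$ is then a piecewise constant nonnegative density with $\cL_1h=h$. Normalising, $\mu=h\,dx$ is a stationary probability measure absolutely continuous with respect to Lebesgue.

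\textbf{Step 3: mixing.} Mixing amounts to showing that $1$ is a simple eigenvalue of $B_1$ and that there are no other eigenvalues of modulus $1$. Then $1$ is simple and is the only peripheral eigenvalue of $\cL_1$ on $W_r$. Item~(\ref{item2:propgenquasi}) of Proposition~\ref{propgenquasi} then gives $\int\cL_1^n\phi\,\psi\,dx\to\int\phi\,dx\int\psi\,d\mu$ at an exponential rate for smooth $\phi,\psi$, and mixing follows by density.

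\textbf{Main obstacle.} The hard part is proving that $B_1$ is irreducible and aperiodic. I would first exploit transitivity of each $T_\omega$: the support of $B_1$ contains the support of $\int A_\omega^{T}\,d\bP$ aggregated on $\tilde\cI$, and $B_1$ dominates $\bP(E)\,\bar B_{\omega}$ on any positive-measure set $E$ of $\omega$ with a fixed combinatorial type. The partitions are finite, so there are countably many combinatorial types and some type has positive probability. The aggregated adjacency matrix of a transitive Markov map is irreducible. For aperiodicity I would try to show that this matrix is primitive. Transitive piecewise expanding maps in this setting have a full-branch iterate, so some power has all entries positive. If only irreducibility holds, I would instead argue that the peripheral spectrum is cyclic and use the positive-measure mixture over types to break periodicity.
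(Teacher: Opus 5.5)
\textbf{Verdict: genuine gap.} The aperiodicity step in your plan cannot be carried out, and in fact the mixing claim is false under the stated hypotheses.

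\textbf{What is sound.} Your Steps 1--2 follow the same route as the paper: the block-triangular representation on $\fB_r(\tilde\cI)$, blocks $B_{1+l}$ of spectral radius at most $\lambda^{-l}$ for $l\ge 1$, and Perron--Frobenius on $B_1$. You are more careful than the paper in two respects. You work with $\cL_1$ rather than $\cL_0$. You also note that $B_1$ is stochastic only after weighting by the lengths $|\tilde I_i|$. Your earlier remark that the columns of $B_{\omega,1}$ have $\ell^1$ norm at most $1$ is false in general and should be stated in that weighted norm. The entrywise bound $B_{1+l}\le\lambda^{-l}B_1$ then gives the spectral radius estimate. This establishes the stationary measure with piecewise constant density. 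Your irreducibility argument via a positive-probability aggregated type is also correct, and gives simplicity of $1$.

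\textbf{Where it fails.} The claim that transitive piecewise linear Markov maps have a full-branch iterate is false. Take $\tilde\cI=\{(0,\frac12),(\frac12,1)\}$ and let $T$ have slope $2$, mapping each of $(0,\frac14)$ and $(\frac14,\frac12)$ onto $(\frac12,1)$, and each of $(\frac12,\frac34)$ and $(\frac34,1)$ onto $(0,\frac12)$. This map is transitive (in the usual sense) and expanding, and $B_1=\begin{pmatrix}0&1\\1&0\end{pmatrix}$. The function $h=1_{(0,1/2)}-1_{(1/2,1)}$ satisfies $\cL_1h=-h$.

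Your fallback, using the mixture over combinatorial types to break periodicity, also fails. Nothing in the hypotheses of Theorem~\ref{thm:proj} prevents $\bP$ from being a Dirac mass at this map, or all $T_\omega$ from sharing this cyclic structure. In that case $\cL_1 1=1$, so $\mu$ is Lebesgue, and $\int\cL_1^n h\cdot h\,dx=(-1)^n$ while $\int h\,dx=0$. So the annealed system is not mixing.

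\textbf{Relation to the paper.} The paper's proof has the same defect. It deduces from irreducibility of $B_1$ alone that every eigenvalue other than $1$ satisfies $|z|<1$. Perron--Frobenius gives that only for primitive matrices; irreducible periodic ones have the whole cyclic group on the unit circle.

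\textbf{What would fix it.} An extra aperiodicity hypothesis is needed, for example that the support of $B_1$ is primitive. This holds, for instance, if $T_\omega$ is topologically mixing for $\omega$ in a set of positive probability. Since there are only finitely many aggregated $0$--$1$ patterns on $\tilde\cI$, some positive-probability subset $E$ shares one pattern. That pattern is primitive because the maps are mixing, and the support of $B_1$ contains it, so $B_1$ is primitive. With that hypothesis, your Step 3 goes through as written.
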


\begin{proof}
Fix \(r\ge 1\). For \(k=1\), the matrices \(B_{\omega,1}\) are non-negative and left stochastic. Averaging over \(\Omega\), we obtain the matrix
\[
B_1:=\int_\Omega B_{\omega,1},d\mathbb P(\omega),
\]
which is again a non-negative left stochastic matrix. Since the family is transitive, \(B_1\) is irreducible. Hence, by the Perron--Frobenius theorem (see, e.g., \cite[Section 9.3]{BoGo97}), there exists a positive diagonal matrix \(D\) such that \(DB_1D^{-1}\) is stochastic. Consequently, \(1\) is a simple eigenvalue of \(B_1\), and all other eigenvalues satisfy \(|z|<1\).

For \(2\le k\le r+1\), the matrices \(B_k\) are non-negative and satisfy
\[
\sum_{i=1}^{N} B_k[i,j]\le \lambda^{k-1}<1,
\]
for every column \(j\). Therefore, \(r(B_k)\le |B_k|_1 \le \lambda^{k-1}<1\), so the spectrum of \(B_k\) is contained in a disc of radius strictly smaller than \(1\).

By the matrix representation \eqref{eq:matrixrep}, the action of \(\mathcal L_0\) on \(\mathfrak B_r(\tilde{\mathcal I})\) is upper triangular, with diagonal blocks given by \(B_1,\dots,B_{r+1}\). Hence the only spectral value of modulus one arising from this finite-dimensional part is the simple eigenvalue \(1\) coming from \(B_1\). Combining this with the quasi-compactness of \(\mathcal L_0\) established in \eqref{eq:Lquasicon}, we conclude that \(1\) is a simple eigenvalue of \(\mathcal L_0\) and that no other eigenvalue lies on the unit circle. Therefore \(\mathcal L_0\) has a spectral gap. Standard spectral arguments then imply that the random system is mixing and admits a unique stationary probability measure \(\mu\ll m\), whose density is the positive eigenfunction corresponding to the eigenvalue \(1\).
\end{proof}

\subsection{Example}\label{sec:eg}
In this simple example of annealed random dynamical system, we highlight the fact that Ruelle-Pollicott resonances for the annealed operator $\cL_k$ in general are not simply the average of eigenvalues, that is $\int_\Omega \lambda_{\omega,i}d\bP(\omega)$ where the $\lambda_{\omega,1}\geq \lambda_{\omega,2}\geq \dots  $ stand for the eigenvalues of $L_{\omega,k}$ counted with multiplicity and ordered starting from the top eigenvalue. We will see however in the next section that random families of hyperbolic Blaschke product are families whose Ruelle-Pollicott resonances have this property. \\

To set up our example we choose two maps $T_1 : x\in I\mapsto \beta x \,mod 1$ and $T_2 : I\mapsto I  $, for some\footnote{For example, take $x_0=\frac 1 4$ and $\beta=4$ (actually any integer $\beta$ would work).} $x_0:=\frac{m}{\beta}$ with $m\in \bN$ and $\frac{m}{\beta}<\frac 1 2$, defined as

\begin{align*}
    T_2(x)=\left\{ \begin{array}{r l c}
        \frac{1-x_0}{x_0}x+x_0 & \textit{ if }x\in [0,x_0]  \\
         \frac{1}{1-x_0}(x-x_0) & \textit{ otherwise } \\
    \end{array}\right.
\end{align*}
with $\alpha_1^{-1}=\frac{1-x_0}{x_0}$ and $\alpha_2^{-1}=\frac{1}{1-x_0}$. As a common reference partition between the two maps, one can choose $\cI:=\{(0,x_0),(x_0,1)\}$. 
Taking the Markov partition $\cI_2$ for $T_2$ as $\cI$, this map has the following transition matrix $A_2=\left(\begin{array}{r l c}
    0 & 1 \\
     1& 1
\end{array}\right)$ and the corresponding block $B_{2,k}:=\left(\begin{array}{r l c}
    0 & \alpha_2^k \\
     \alpha_1^k& \alpha_2^k
\end{array}\right)$.
As for the $\beta$-map $T_1$, the corresponding matrix is obtained from \eqref{eqtransmat} as 
$B_{1,k}:=\left(\begin{array}{r l c}
    x_0\beta^{-(k-1)} & (1-x_0)\beta^{-(k-1)} \\
     x_0\beta^{-(k-1)}& (1-x_0)\beta^{-(k-1)}
\end{array}\right)$.

On $\Omega=\{1,2\}$ we fix the probability $\bP$ as follows, $\bP(\{1\})=v$ and $\bP(\{2\})=1-v$, for some \(0<v<1\). 

According to the equation \eqref{eq:matrixrep}, the annealed operator $\cL_k: \fB_r(\cI)\mapsto \fB_r(\cI)$ has diagonal blocks of the following form,
\begin{align*}
    \tilde B_k&=(vB_{1,k}+(1-v)B_{2,k})\\
    &=\left(\begin{array}{r l c}
    vx_0\beta^{-(k-1)} & (1-v)\alpha_2^k+v(1-x_0)\beta^{-(k-1)} \\
     (1-v)\alpha_1^k+vx_0\beta^{-(k-1)}& (1-v)\alpha_2^k+v(1-x_0)\beta^{-(k-1)}
\end{array}\right)
\end{align*}

Notice that generically $B_{1,k}$ and $B_{2,k}$ can be diagonalized, however if we denote $\lambda_1$ the leading eigenvalue of $B_{1,k}$ and $\lambda_2$ for $B_{2,k}$, $\frac{\lambda_1+\lambda_2} 2$ would be an eigenvalue for $\tilde B_k$ only if both matrices have the same eigenvector for their respective eigenvalues. Reasoning the same way for the least eigenvalue of respectively $B_{1,k}$ and $B_{2,k}$, we deduce that the spectrum of $\cL_k$ is given by $\int_\Omega \lambda_{\omega,i}d\bP(\omega)$ only if both matrices $B_{1,k}$ and $B_{2,k}$ commute, which they don't in general.
To check this, one can for example choose the following parameters $v=\frac 1 2$, $\beta=4$ and $x_0=\frac 1 4$ : then $\alpha_1^{-1}=3$ and $\alpha_2^{-1}=\frac 4 3$, thus 

\begin{align*}
    \tilde B_k&=\frac 1 2 (B_{1,k}+B_{2,k})\\
    &=\frac 1 2 \left(\begin{array}{r l c}
    4^{-k} & (\frac 3 4)^k+(\frac 3 4)4^{-(k-1)} \\
     (\frac 1 3)^k+4^{-k}& (\frac 3 4)^k+(\frac 3 4)4^{-(k-1)}
\end{array}\right)\\
&=\frac 1 2 \left(\begin{array}{r l c}
    4^{-k} & \frac{3^{k}+3}{4^k} \\
     3^{-k}+4^{-k}& \frac{3^{k}+3}{4^k}
\end{array}\right).
\end{align*}

A direct computation of the respective eigenvalues of the blocks $\tilde B_k$ gives
\begin{align*}
    \lambda_{i,k}:=\frac{\dfrac{3^k+4}{4^k}+(-1)^i\left(\dfrac{3^k+2}{4^k}+3^{-k}\right)}{2}.
\end{align*}
These eigenvalues (which are the Ruelle-Pollicott resonances for the annealed operator) are all distincts, they thus have multiplicity $1$.
One can compare these to the averaged eigenvalues $\frac{ \lambda_{i,k}^{(2)}+ \lambda_{i,k}^{(1)}}{2}$ of the two respective transfer operators corresponding to the transformation $T_1$ and $T_2$. Contrary to the Blaschke product case (see Theorem \ref{thm:Bl_spectrum}) the two families do not genrally coincide. To see that in our example, notice that the eigenvalues $\lambda_{i,k}^{(1)}$ for $B_{1,k}$ are respectively given by
\begin{align*}
    \lambda_{1,k}^{(1)}=4^{-(k-1)}, \text{ and } \lambda_{0,k}^{(1)}=0. 
\end{align*}
As for $B_{2,k}$, the eigenvalues $\lambda_{i,k}^{(2)}$ are given by
\begin{align*}
    \lambda_{i,k}^{(2)}=\frac{\left(\frac34\right)^k+(-1)^i\frac{\sqrt{3^{2k}+4^{k+1}}}{4^k}}{2}
=\frac{3^k+(-1)^{i}\sqrt{3^{2k}+4^{k+1}}}{2\cdot 4^k}.
\end{align*}
Thus the eigenvalues (Ruelle-Pollicott resonances) of $\tilde B_k$ do not coincide with the average $\frac{ \lambda_{i,k}^{(2)}+ \lambda_{i,k}^{(1)}}{2}$ of eigenvalues from $B_{1,k}$ or $B_{2,k}$.

\section{Some compact annealed operators and their resonances}
\subsection{Inverse resonance problem featuring Blaschke products}\label{secblaschke}

\subsubsection{Settings}
Let $\bT^2:=\{z=(z_1,z_2)\in \bC^2, |z_1|=|z_2|=1\}$ be the $2$-torus we define a class of analytic hyperbolic diffeomorphisms: a Blaschke product $T_\omega :\bT^2\mapsto \bT^2$ defined for $\omega \in \bC$ with $|\omega|<1$ and $z=(z_1,z_2)\in \bT^2$ by
\begin{align}\label{blaschke_stand}
    T_{\omega}(z_1,z_2)=\left(z_1\frac{z_1-\omega}{1-\bar{\omega}z_1}z_2,\frac{z_1-\omega}{1-\bar{\omega}z_1}z_2\right).
\end{align}

\begin{remark}
    In the whole Section \ref{secblaschke} we focus on the family of Koopman operators $U_\omega :f\mapsto f\circ T_\omega$ and the associated annealed operator $\cU$ (defined in \ref{def:cU}). However notice that for any $\omega \in \bC$, $|\omega|<1$, $T_\omega^{-1}$ is also a Blaschke product and the family of transfer operators as defined in formula \ref{eqoptrans} is of the form $L_\omega : f\mapsto f\circ T_\omega^{-1}$ thus all the following results also hold for the associated Transfer operator $\cL: f\mapsto \int f\circ T_\omega^{-1}d\bP(\omega) $.
\end{remark}  

\subsubsection{Hilbert Space and Transfer operator}
We consider, for $a>0$, the space $(H_a,\norm{\cdot}_{a})$,  formalized in the work of \cite{SBJ17}, as a completion by some norm $\norm{\cdot}_a$ of the space $\cB$ of Laurent polynomials,
$$
\cB:=\{f:\bT^2\mapsto \bC, f(z)=\sum_{|n|\leq N}f_nz^n, \textit{ with }f_n\in \bC, N\in \bN\}.
$$
For $n:=(n_1,n_2)\in \bZ^2$, we define $n_u = n_u(n_1,n_2)$ and $n_s= n_s(n_1,n_2)$ respectively as
\begin{align*}
    n_{u}&:=(\omega_u-1)n_1+n_2\\
    n_s&:=(\omega_s-1)n_1+n_2,
\end{align*}
where $\omega_u:=\frac{1+\sqrt{5}}{2}$ and $\omega_s:=\frac{1-\sqrt{5}}{2}$.
We can now define $\norm{\cdot}_a$ which derives from the scalar product $<\cdot,\cdot>_a$ defined for $f,g\in \cB$ as
\begin{align*}
    <f,g>_a=\sum_{|n|\leq N}f_n\bar{g_n}\exp(-2a|n_u|+2a|n_s|).
\end{align*}

The space $(H_a,\norm{\cdot}_{a})$ is then the completion of $(\cB,\norm{\cdot}_{a})$ which happens to be a Hilbert space.

\begin{defin}\label{def:cU}
Let $U_\omega : H_a \mapsto H_a$ be the Koopman operator associated to $T_\omega$, $f\in H_a \mapsto f\circ T_\omega$,  we define the annealed Koopman operator $\cU : H_a \to H_a$ with the base space \(\Omega=\{\omega\in\bC: |\omega|<1\}\), as
\[
\cU:=\int_\Omega U_\omega d\bP(\omega),
\]
provided the probability measure on \(\Omega\) is such that \(\int_\Omega \norm{U_\omega}_a\ d\bP(\omega)<\infty\). 
\end{defin}

  \begin{remark}
 In general, one can choose to consider any other base space \(\Omega\) instead of $\{\omega\in \bC,|\omega|<1\}$ and rather define the Blaschke product as 
\[
    T_{\omega}(z_1,z_2)=\left(z_1\frac{z_1-b_\omega}{1-\bar{b_\omega}z_1}z_2,\frac{z_1-b_\omega}{1-\bar{b_\omega}z_1}z_2\right)
\]
where the map \(\omega\in\Omega\mapsto b_\omega\in \{z\in\bC: |z|<1\}\) is measurable. This model can always be reduced to the one stated at the beginning of this section ( see formula \eqref{blaschke_stand}) by a change of variable taking $\bP_b:=(\omega\mapsto b_\omega)*\bP$ as our probability measure on the space $\{z\in \bC,|z|<1\}$ :
\begin{align*}
    \int_{\omega\in \Omega}U_{b_\omega} d\bP(\omega)=\int_{|z|<1 }U_zd\bP_b(z).
\end{align*}
The integrability condition on $(U_{b_\omega})_{\omega \in \Omega}$ reduces to \(\int_\Omega \norm{U_{b_\omega}}_a\ d\bP(\omega)<\infty\).
    
\end{remark}

\noindent

\begin{theorem}\label{thm:Bl_spectrum}
Let \(\mu\) be the volume measure on \(\bT^2\).
    The annealed Koopman operator $\cU:H_a \to H_a$ is a well-defined compact operator and for any \(a>0\), $\cU$ satisfies the following decomposition
    \[
\int f \cdot \cU^n g\ d\mu = \sum\limits_{|\lambda_i|\geq\ve}\sum\limits_{j\leq N_i}\lambda_i^n n^j c_{i,j}(f,g) + o(\ve^n),
\]
Furthermore, the spectrum is given by 
    \begin{equation}\label{eq:Bl_spectrum}
         \sigma(\cU) = \{\int_\omega (-\omega)^nd\bP(\omega) : n\in\bN\} \cup \{\int_\omega (-\bar\omega)^nd\bP(\omega) : n\in\bN\}\cup \{0,1\}.
    \end{equation}
    Each non-zero element $\lambda\in \sigma(\cU)$ of the spectrum is an eigenvalue, the algebraic and geometric multiplicity $N$ of which coincides with the number $N:=|\{n\in\bN,\int_\omega (-\omega)^nd\bP(\omega)=\lambda\}|+|\{n\in\bN,\int_\omega (-\bar\omega)^nd\bP(\omega)=\lambda\}|$.
\end{theorem}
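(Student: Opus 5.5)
The plan is to rely on the deterministic analysis of \cite{SBJ17} and on the key structural fact that all the Koopman operators $U_\omega$ share a common triangular structure in a fixed, $\omega$-independent basis. We proceed in four steps.

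\textbf{Step 1: compactness and integrability.} From \cite{SBJ17} we take the following deterministic facts.
\begin{itemize}
\item For each $|\omega|<1$, the operator $U_\omega:H_a\to H_a$ is compact.
\item Its norm is controlled uniformly on compact subsets of the disc. Concretely, $\norm{U_\omega}_a\leq C(a,\omega)$ with $C$ continuous in $\omega$.
\item The map $\omega\mapsto U_\omega\in L(H_a)$ is continuous, hence measurable. This follows because the Fourier coefficients of $z_1^{n_1}z_2^{n_2}\circ T_\omega$ are analytic in $(\omega,\bar\omega)$ and obey uniform exponential bounds.
\end{itemize}
Since $H_a$ is a Hilbert space, hence reflexive, the Bochner construction of Section~\ref{subseciidintro} applies under the standing integrability assumption $\int\norm{U_\omega}_a\,d\bP<\infty$. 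Proposition~\ref{prop:gencomp} then gives that $\cU$ is compact. The decorrelation expansion follows from the spectral decomposition of a compact operator: its nonzero spectrum consists of isolated eigenvalues with finite-rank Riesz projections, accumulating only at $0$. Pairing with $f$ in the $L^2(\mu)$ duality, which extends continuously to $H_a$ on Laurent polynomials, gives the bilinear forms $c_{i,j}$.

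\textbf{Step 2: the common triangular structure.} We recall the mechanism of \cite{SBJ17}. Write $b_\omega(z_1)=\frac{z_1-\omega}{1-\bar\omega z_1}$ and expand it around $\omega=0$:
\[
b_\omega(z_1)=z_1-\omega+\bar\omega z_1^2+\dots
\]
This expansion is a power series in $\omega,\bar\omega$ whose coefficients are monomials in $z_1$. The monomial $z^n$ composed with $T_\omega$ is $z_1^{n_1}b_\omega(z_1)^{n_1+n_2}z_2^{n_1+n_2}$, which is the cat-map monomial $z^{An}$ times a correction.

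\cite{SBJ17} exhibits a filtration $\cB=\bigcup_k V_k$ (equivalently, an ordering of monomials adapted to the stable and unstable cones) with the following properties:
\begin{itemize}
\item each $U_\omega$ is upper triangular in a basis $(e_j)$ that is independent of $\omega$;
\item the diagonal entries are $1$ on the constants and, otherwise, $(-\omega)^n$ or $(-\bar\omega)^n$, each value $n\geq 1$ occurring once for $\omega$ and once for $\bar\omega$.
\end{itemize}
This is exactly how the deterministic spectrum $\{(-\omega)^n\}\cup\{(-\bar\omega)^n\}\cup\{0,1\}$ is obtained there. The essential point is that the basis and the triangular ordering depend only on the cat-map part, not on $\omega$. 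I expect this step to be the \textbf{main obstacle}. If the ordering in \cite{SBJ17} is produced via a conjugacy that depends on $\omega$, I would instead work directly with the operator in the monomial basis, using the $H_a\to H_{a'}$ compactness estimates to justify the infinite triangular reduction.

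\textbf{Step 3: integrate the triangular matrices.} With the basis fixed independently of $\omega$, each matrix coefficient $\langle U_\omega e_j,e_i\rangle$ is measurable and bounded by $\norm{U_\omega}_a$ times normalising constants. Integrating term by term, which the Bochner integral permits, shows that $\cU$ is upper triangular in the same basis, with diagonal entries $\int(-\omega)^n\,d\bP$, $\int(-\bar\omega)^n\,d\bP$ and $1$.

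\textbf{Step 4: read off the spectrum of $\cU$.} For a compact operator that is triangular with respect to a filtration by finite-dimensional invariant subspaces whose union is dense, we argue as follows.
\begin{itemize}
\item Each truncation $\cU|_{V_k}$ is a finite triangular matrix. Its eigenvalues are the diagonal entries, counted with multiplicity.
\item A nonzero $\lambda$ has a finite-dimensional generalized eigenspace, and the Riesz projection commutes with $\cU$.
\item Using compactness together with the density of $\bigcup_k V_k$, the spectral projection of $\cU$ for $\lambda$ has rank equal to the limit of the corresponding ranks for $\cU|_{V_k}$. This uses upper semicontinuity of spectra for the compact restrictions, plus invariance of the $V_k$.
\end{itemize}
This gives \eqref{eq:Bl_spectrum} together with the multiplicity count $N$. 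Point $0$ is added because $\cU$ is compact on an infinite-dimensional space. The eigenvalue $1$ is simple: its only diagonal source is the constants, since $|\int(-\omega)^n\,d\bP|\leq\int|\omega|^n\,d\bP<1$.

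The claim that the geometric multiplicity equals the algebraic multiplicity requires the absence of nontrivial Jordan blocks. I would try to show that the off-diagonal entries linking equal diagonal values vanish. In the deterministic case \cite{SBJ17} the relevant blocks are diagonalisable, and after integration the same linking entries are integrals of quantities that vanish identically. If this cannot be verified, the statement should be read with algebraic multiplicity only.
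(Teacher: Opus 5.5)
\textbf{Steps 1--3 are fine.} They match the paper: compactness via Proposition~\ref{prop:gencomp}, the fixed monomial basis $(e_n)$ with the $\omega$-independent ordering \eqref{eq:basis}, and term-by-term integration of the matrix coefficients, which gives diagonal entries $1$, $\int(-\omega)^k\,d\bP$, $\int(-\bar\omega)^k\,d\bP$. The obstacle you anticipate in Step 2 does not arise. The zero pattern holds for every $\omega$: $m_2=n_1+n_2$, with $m_1\ge n_1$ if $n_1+n_2>0$ and $m_1\le n_1$ if $n_1+n_2<0$.

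\textbf{The gap is in Step 4.} It rests on a structure that does not exist. There is no filtration $\cB=\bigcup_kV_k$ by finite-dimensional $U_\omega$-invariant subspaces; for $\omega\neq0$, $U_\omega$ does not even map Laurent polynomials to Laurent polynomials. For example,
$U_\omega z_2=\frac{z_1-\omega}{1-\bar\omega z_1}z_2=-\omega z_2+(1-|\omega|^2)\sum_{j\ge1}\bar\omega^{j-1}z_1^jz_2$.
The triangularity of Remark~\ref{rem:diagonal} runs the other way: $\langle\cU e_n,e_m\rangle_a=0$ unless $e_m$ comes \emph{after} $e_n$ in \eqref{eq:basis}. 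So the invariant subspaces are the closed, infinite-dimensional tails. Consequently ``$\cU|_{V_k}$'' is not a finite triangular matrix, and your rank comparison has nothing to act on.

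The paper uses finite sections instead. The operator $\cU_N=\int\Pi_NU_\omega\Pi_N\,d\bP$ is a finite principal submatrix of a triangular matrix, hence triangular with the corresponding diagonal entries. Moreover $\cU_N\to\cU$ in norm (Lemma~\ref{lem:finite-approx}). Since the diagonal values accumulate only at $0$, norm stability of isolated eigenvalues and of the ranks of their Riesz projections gives $\sigma(\cU)$ with algebraic multiplicities.

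Alternatively, your filtration argument can be rescued as follows. Apply it to the adjoint of $\cU$ restricted to the invariant block $\overline{\mathrm{span}}\{e_n:n_1n_2\ge0\}$; there the ordering is one-sided, and initial segments are finite and invariant. Then check separately that the compression to the $n_1n_2<0$ block is quasinilpotent.

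\textbf{Multiplicities.} Your caution about geometric versus algebraic multiplicity is warranted, and the paper's shortcut is not a valid fallback. The vector $e_{0,k}$ is not itself an eigenvector of $\cU$: by the expansion above, $\cU e_{0,1}$ has a component along $e_{1,1}$ proportional to $\int(1-|\omega|^2)\,d\bP>0$.

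What closes the gap is the zero pattern. A chain of nonzero entries starting at $e_{0,k}$ keeps the sign of $m_2$ and has nondecreasing (for $k>0$) or nonincreasing (for $k<0$) $m_1$. So it never reaches another $e_{0,l}$ with $l\neq k$. Hence, in each $\cU_N$:
\begin{itemize}
\item positions with equal nonzero diagonal value are never linked;
\item each such position carries an eigenvector whose earliest nonzero coordinate is that position;
\item every $\lambda\neq0$ is semisimple for $\cU_N$.
\end{itemize}
Finally, $(\cU_N-\lambda)P_N=0$ passes to $(\cU-\lambda)P=0$ by norm convergence of the Riesz projections.
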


\begin{remark}\label{rem:ann_compact}\begin{enumerate}
    \item The volume measure \(\mu\) on \(\bT^2\) in the above theorem is invariant under \(T_\omega\) for every \(\omega\in\Omega\) (see \cite[Section 4]{SBJ17}).
    \item By \cite[Proposition~2.5]{SBJ17}, we get that, for any \(\omega\in \bC\) with \(|\omega|<1\) and any \(a>0\), the operator \(U_\omega : H_a\to H_a\) is compact. The compactness of \(U_\omega\) for any \(\omega \in \bC\) with \(|\omega|<1\) implies by Proposition~\ref{prop:gencomp} that the operator \(\cU : H_a\to H_a\) is compact. Thus $\cU$ has \textbf{full Ruelle-Pollicott resonances} on $H_a$ and we only have to prove the localization of spectral data in Theorem \ref{thm:Bl_spectrum}.
\end{enumerate}
\end{remark}

\subsubsection{Spectral Data}

    As shown in \cite{SBJ17}, the vectors $e_n(z):=z^n \exp({a|n_u|-a|n_s|})$ generate an orthonormal Hilbert basis of $H_a$.
For \(\omega\in\bC\) with \(|\omega|<1\), the action of the operator \(U_\omega\) on the orthonormal basis is given by 
\begin{align}\label{eq:otgrep}
 (U_\omega e_n)(z) &= \exp({a|n_u|-a|n_s|}) z_1^{n_1}z_2^{n_1+n_2}  \left(\frac{z_1-\omega}{1-\bar\omega z_1}\right)^{n_1+n_2}\\
 &= \exp({a|n_u|-a|n_s|})\sum\limits_{m\in\bZ^2}b_{m,n}^\omega z^m,
\end{align}
where the expansion coefficients of the Laurent series in a neighborhood of the unit torus are given by
\begin{equation}\label{eq:bmn}
    b_{m,n}^\omega = \delta_{m_2,n_1+n_2}M_{-m_1+2n_1+n_2}(\omega,n_1+n_2),
\end{equation}
with $\delta_{p,q}$ standing for  Kronecker's symbol (i.e \(\delta_{p,q}=1\) for \(p=q\) and 0 otherwise), and where we have introduced the shorthand
\begin{equation}\label{eq:M_def}
    M_l(\omega,k) = \int_{|\zeta|=1}\zeta^l\left(\frac{1-\omega/\zeta}{1-\bar\omega \zeta}\right)^k \frac{d\zeta}{2\pi \iota\zeta}
\end{equation}

for the expansion coefficient of a single Blaschke factor. Hence, using the definition of the norm, we obtain
\begin{align}\label{ko_basis}
    \norm{U_\omega e_n}^2_a &= \sum\limits_{m\in\bZ^2}\exp(-2a|n_u|-2a|n_s|)\delta_{m_2,n_1+n_2}|M_{-m_1+2n_1+n_2}(\omega,n_1+n_2)|^2\\
    &\times \exp(-2a|m_u|+2a|m_s|).
\end{align}

Using the above calculation, we obtain 
\begin{equation}\label{eq:cUen}
    (\cU e_n)(z) = \exp({a|n_u|-a|n_s|})\sum\limits_{m\in\bZ^2}b_{m,n} z^m.
\end{equation}
where 
\begin{equation}\label{eq:bmn}
    b_{m,n} = \int_\Omega b_{m,n}^\omega\ d\bP(\omega).
\end{equation}

To compute the spectrum of the operator \(\cU\), we will consider suitable matrix representations of projections of this compact operator to finite dimensional subspaces. Observe that, using formulae \eqref{eq:cUen} and \eqref{eq:bmn},  the matrix representations \(\Gamma\) of \(\cU\) with respect to the orthonormal basis \((e_n)_{n\in \bZ^2}\) is of the form
\begin{equation}\label{eq:Gamma}
    \Gamma_{m,n} = \langle \cU e_n, e_m \rangle_a = b_{m,n} \exp(a|n_u|-a|n_s|-a|m_u|+a|m_s|).
\end{equation}
    A simple calculation using relation \eqref{eq:M_def} and \cite[Lemma~2.3 (i), (iii), (iv)]{SBJ17} yields the following:
    \begin{align}
    \label{eq:bmn-neq}    & m_2\neq n_1 + n_2 :\quad && b_{m,n} = 0\\ 
     \label{eq:bmn-0}   & m_2 = n_1 + n_2 = 0 :  && b_{m,n} =  \delta_{m_1,n_1} \\ 
     \label{eq:bmn-geq}   & m_2 = n_1 + n_2 > 0 : && b_{m,n} = \begin{cases}
            0\ &\text{if}\ m_1<n_1\\
            \int_\Omega (-\omega)^{m_2}\ d\bP(\omega)\ &\text{if}\ m_1=n_1
        \end{cases}\\ 
     \label{eq:bmn-leq}   & m_2 = n_1 + n_2 < 0 : && b_{m,n} = \begin{cases}
            0\ &\text{if}\ m_1>n_1\\
            \int_\Omega (-\bar\omega)^{-m_2}\ d\bP(\omega)\ &\text{if}\ m_1=n_1.
        \end{cases} 
    \end{align}
    Following what is done in \cite[Section~3]{SBJ17}, we first consider the following order of the basis elements:
\[
     e_{0,0}, e_{1,0}, e_{0,1}, e_{-1,0}, e_{0,-1}, e_{2,0}, e_{1,1}, e_{0,2}, e_{-2,0}, e_{-1,-1}, e_{0,-2}, \ldots
\]
    which is arranged as a sequence in the order of increasing norm \(|n|\), with
groups of elements with the same norm traversed in anti-clockwise direction. We then consider the re-ordering as follows. Go along the sequence above from left to right, on encountering an element \(e_{n_1,n_2}\) with \(n_1n_2 < 0\), we move it to the left-most position of the current sequence, and then we continue this algorithm ton obtain the following order:

\begin{equation}\label{eq:basis}
\ldots, e_{1,-1}, e_{-1,1}, e_{0,0}, e_{1,0}, e_{0,1}, e_{-1,0}, e_{0,-1}, e_{2,0}, e_{1,1}, e_{0,2}, e_{-2,0}, e_{-1,-1},e_{0,-2},\ldots
\end{equation}

\begin{lemma}[Approximation by finite-rank truncations]\label{lem:finite-approx}
Let $(\Omega,\mathbb P)$ be a probability space and $H_a$ a Hilbert space with orthonormal basis $(e_n)_{n\in\mathbb Z}$. Let $\Pi_N$ denote the orthogonal projection onto \(\mathrm{span}\{e_n : |n|\le N\}\). 
Define the finite approximations
\[
\mathcal U_N := \int_\Omega \Pi_N \circ U_\omega \circ \Pi_N \, d\mathbb P(\omega).
\]

Then
\[
\norm{\mathcal U_N - \mathcal U}_a \to 0 \quad \text{as } N \to \infty.
\]
\end{lemma}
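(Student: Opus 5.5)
We bound $\norm{\mathcal U_N-\mathcal U}_a$ by a Bochner-integral of fibrewise errors and then apply dominated convergence. Since $\Pi_N$ is bounded and independent of $\omega$, linearity of the Bochner integral (as in Section \ref{sec:classic}) gives
\[
\mathcal U_N-\mathcal U=\int_\Omega \bigl(\Pi_N U_\omega \Pi_N - U_\omega\bigr)\,d\bP(\omega),
\]
and hence, by the norm inequality for Bochner integrals,
\[
\norm{\mathcal U_N-\mathcal U}_a\le \int_\Omega \norm{\Pi_N U_\omega\Pi_N-U_\omega}_a\,d\bP(\omega).
\]
We first need the integrand to be measurable in $\omega$. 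This follows from the measurability of $\omega\mapsto U_\omega$, which is implicit in Definition \ref{def:cU}, together with continuity of $A\mapsto \Pi_N A\Pi_N - A$ in operator norm.

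\textbf{Pointwise convergence.} For each fixed $\omega$ the integrand tends to $0$. We split
\[
\Pi_N U_\omega\Pi_N-U_\omega=(\Pi_N-I)U_\omega+\Pi_N U_\omega(\Pi_N-I).
\]
By Remark \ref{rem:ann_compact}, $U_\omega$ is compact on $H_a$ (\cite[Proposition~2.5]{SBJ17}). Since $\Pi_N\to I$ strongly and $\norm{\Pi_N}\le 1$, the convergence is uniform on the relatively compact set $U_\omega(\text{unit ball})$. Therefore $\norm{(\Pi_N-I)U_\omega}\to0$.

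For the second term we use the adjoint: $\norm{U_\omega(\Pi_N-I)}=\norm{(\Pi_N-I)U_\omega^*}$. Here $U_\omega^*$ is compact and $\Pi_N$ is self-adjoint, so this norm also tends to $0$. Multiplying by $\Pi_N$ does not increase the norm, so the second term tends to $0$ as well.

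\textbf{Domination.} For every $N$ and $\omega$ we have
\[
\norm{\Pi_N U_\omega\Pi_N-U_\omega}_a\le 2\norm{U_\omega}_a,
\]
and the right-hand side is $\bP$-integrable by the standing assumption $\int_\Omega\norm{U_\omega}_a\,d\bP<\infty$. The dominated convergence theorem then gives $\norm{\mathcal U_N-\mathcal U}_a\to0$.

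The main obstacle is the second term of the splitting. Strong convergence of $\Pi_N$ on the left of a compact operator is standard, but on the right one genuinely needs compactness of the adjoint (Schauder's theorem) and the fact that $\Pi_N$ is an orthogonal, hence self-adjoint, projection. A secondary technical point is measurability of $\omega\mapsto\norm{\Pi_NU_\omega\Pi_N-U_\omega}_a$. If only strong measurability of $\omega\mapsto U_\omega$ were available, we would instead invoke Lemma \ref{lemmesurabl} together with separability of $H_a$.
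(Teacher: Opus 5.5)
Your proof is correct and has the same skeleton as the paper's. Both bound $\norm{\cU_N-\cU}_a$ by $\int_\Omega\norm{\Pi_N U_\omega\Pi_N-U_\omega}_a\,d\bP$ via \eqref{eq:conL}, show the integrand tends to $0$ for each fixed $\omega$, and conclude by dominated convergence with majorant $2\norm{U_\omega}_a$. The genuine difference is in the fibrewise step.

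The paper splits the error as $U_\omega(I-\Pi_N)+(I-\Pi_N)U_\omega\Pi_N$ and uses the quantitative estimate $\norm{U_\omega e_n}_a\le C_\omega e^{-\delta_\omega|n|}$ from \cite[Lemma~2.4]{SBJ17}. This gives a Cauchy--Schwarz tail bound, uniform over the unit ball, for the first term. It also shows that $U_\omega$ is Hilbert--Schmidt, so the second term is controlled by the tail of the Hilbert--Schmidt sum.

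You use the mirror-image splitting and only the compactness of $U_\omega$ from \cite[Proposition~2.5]{SBJ17}. Uniform convergence of $\Pi_N\to I$ on the relatively compact image of the unit ball handles the left factor. Schauder's theorem together with $\Pi_N=\Pi_N^*$ transfers this to the right factor.

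Your argument is softer and more general: it proves the lemma for any measurable, integrable family of compact operators on a Hilbert space, which fits the abstract way the lemma is stated. The paper's route depends on Blaschke-specific decay, but in return gives an explicit rate, in terms of $\delta_\omega$, for the term $U_\omega(I-\Pi_N)$. You also justify measurability of $\omega\mapsto\norm{\Pi_NU_\omega\Pi_N-U_\omega}_a$, which the paper leaves implicit.
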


\begin{proof}
We write
\(
\mathcal U_N - \mathcal U
=
\int_\Omega (U_{N,\omega} - U_\omega)\, d\mathbb P(\omega),
\quad
U_{N,\omega} := \Pi_N \circ U_\omega \circ \Pi_N.
\)

Using \eqref{eq:conL} from Proposition \ref{prop:integomeg} in the appendix, we get
\[
\norm{\mathcal U_N - \mathcal U}_a
\leq
\int_\Omega \norm{U_{N,\omega} - U_\omega}_a\, d\mathbb P(\omega).
\]

\medskip

\noindent
Step 1: Pointwise convergence.
Let $f = \sum_n f_n e_n$ with $\|f\|_a=1$. Then one can write 
\(
f= \Pi_N f + (I-\Pi_N)f,
\)
where, by definition, \(\Pi_N f= \sum_{|n|\le N} f_n e_n\) and \((I-\Pi_N)f= \sum_{|n|>N} f_n e_n\). 

Accordingly, 
\begin{align*}
    (U_\omega-\Pi_N\circ U_\omega\circ \Pi_N)f &= U_\omega(I-\Pi_N) f + (I-\Pi_N) U_\omega\Pi_Nf\\
    &= U_\omega \sum_{|n|>N} f_n e_n+(I-\Pi_N)U_\omega \sum_{|n|\le N} f_n e_n.
\end{align*}

For the first term, using \cite[Lemma~2.4]{SBJ17}, 
for every \(\omega\in\Omega\), there exists \(0<\delta_\omega<1\), \(C_\omega>0\) such that 
\[
\norm{U_\omega(e_n)}_a \le C_\omega e^{-\delta_\omega |n|},
\]
which implies \(U_w\) is a Hilbert-Schmidt operator and 
\[
\norm{U_\omega \sum_{|n|>N} f_n e_n}_a \le
\sum_{|n|>N} |f_n| \norm{U_\omega e_n}_a \le
C_\omega \sum_{|n|>N} |f_n| e^{-\delta_\omega |n|}
\to 0,
\]
since \((e_n)\) is orthonormal in \(H_a\), we have \(\norm{f}^2_a=\sum_n|f_n|^2<\infty\). Then Cauchy-Schwarz gives
\[
\sum_{n>N}|f_n| e^{-\delta_\omega |n|}\le \left(\sum_{|n|>N}|f_n|^2\right)^{1/2}\left(\sum_{|n|>N}e^{-2\delta_\omega |n|}\right)^{1/2}
\]
which tends to 0.

For the second term, write 
\[
(I-\Pi_N)U_\omega \Big(\sum_{|n|\le N} f_n e_n\Big) =
\sum_{|n|\le N} f_n (I-\Pi_N)U_\omega e_n.
\]

Hence
\(
\norm{(I-\Pi_N)U_\omega \Pi_N f}_a \le
\sum_{|n|\le N} |f_n| \sum_{|k|>N} |\langle U_\omega e_n, e_k\rangle|.
\)
Fix \(\omega\), since \(U_\omega\) is Hilbert-Schmidt, its matrix coefficients 
\(
\Gamma_{k,n}(\omega):=\langle U_\omega e_n, e_k\rangle
\)
satisfy 
\(
\sum_{k,n \in \mathbb Z} |\Gamma_{k,n}(\omega)|^2 < \infty.
\)
Hence,  \(\sum_{|k|>N,n \in \mathbb Z} |\Gamma_{k,n}(\omega)|^2\to 0\) as \(N\to\infty\). 
This implies that\footnote{\(\norm{\cdot}_{HS}\) is the Hilbert-Schmidt norm given by \(\langle A,B\rangle_{HS}= tr(B^*A)\) where \(tr(A)\) and \(A^*\) are respectively the trace and adjoint of the matrix \(A\).} 
\[
\norm{(I-\Pi_N)U_\omega \Pi_N}_{HS}= \sum_{|k|>N,\\
|n| \le N} |\Gamma_{k,n}(\omega)|^2\to 0\ \text{as}\ N\to\infty.
\]
Since \(\norm{(I-\Pi_N)U_\omega \Pi_N}_a\le \norm{(I-\Pi_N)U_\omega \Pi_N}_{HS}\), we get
 \(\norm{U_{N,\omega} - U_\omega}_a \to 0\) for all \( \omega\in\Omega\).\\

\noindent
Step 2: Dominated convergence.
Since $\|\Pi_N\|=1$, we have  \( \norm{U_{N,\omega}}_a \le \norm{U_\omega}_a\), 
hence \( \norm{U_{N,\omega} - U_\omega}_a \leq 2\norm{U_\omega}_a\).

Since, $2\norm{U_\omega}_a \in L^1(\Omega)$, therefore, by dominated convergence,
\[
\int_\Omega \norm{U_{N,\omega} - U_\omega}_a\, d\mathbb P(\omega) \to 0.
\]

\medskip

Combining the above estimates yields that as \(N\to\infty\), \(\norm{\mathcal U_N - \mathcal U}_a \to 0\).
\end{proof}

\begin{remark}\label{rem:diagonal}
    Using \cite[Lemma~3.1]{SBJ17} and its proof, we get that the matrix given by \eqref{eq:Gamma} is lower-triangular with respect to the basis re-ordered as in formula \eqref{eq:basis}. The only non-zero diagonal entries of this matrix are 
    \[
\Gamma_{00,00} = 1, \quad
\Gamma_{0k,0k} = \int (-\omega)^kd\bP(\omega), \quad
\Gamma_{0{-k},0{-k}} = \int (-\bar{\omega})^kd\bP(\omega),
\]
that is the only non-zero diagonal entries occur in the following cases  
\begin{itemize}
    \item \(n_1=m_1=n_2=m_2=0\) or
    \item \(n_1=m_1=0\) and \(n_2=m_2=k>0\) or
    \item \(n_1=m_1=0\) and \(n_2=m_2=k<0\).
\end{itemize}
Furthermore, if we divide the matrix in four blocks with the upper right one corresponding to the entries with \(n_1n_2\geq0\), \(m_1m_2<0\) consists of all zeros; the upper left one with \(n_1n_2<0\), \(m_1m_2<0\) is a lower triangular block with zeros on the diagonal, so the only interesting block is the lower right one with \(n_1n_2\geq0\) and \(m_1m_2\geq 0\) which is also lower triangular and has non-zero diagonal entries.
\end{remark}

\begin{proof}[Proof of Theorem~\ref{thm:Bl_spectrum}]
 
 Compactness of  \(\cU: H_a \to H_a\) was established in Remark~\ref{rem:ann_compact}. By Lemma~\ref{lem:finite-approx}, \(\cU_N\) is the finite approximation  of \(\cU\), therefore the matrix for \(\cU_N\) is the same as given by \eqref{eq:Gamma}, and thus using Remark~\ref{rem:diagonal}, we get that this matrix is lower-triangular with respect to the basis in \eqref{eq:basis}. 
 Moreover, the only non-zero diagonal entries of this matrix are given as follows, for \(k\in \bN\): 
 \[
 \Gamma_{00,00}= \int_\Omega1\ d\bP(\omega) = 1,\ \Gamma_{0k,0k} = \int_\Omega (-\omega)^k\ d\bP(\omega)\  \text{and}\  \Gamma_{0-k,0-k} = \int_\Omega (-\bar\omega)^k\ d\bP(\omega).
 \]
 Therefore, the spectrum of \(\cU_N\), for \(k\in\bN\), is given by 
     \begin{equation}\label{eq:U_n-spectrum}
         \sigma(\cU_N) = \left\{\int_\Omega (-\omega)^k\ d\bP(\omega)\right\} \cup \left\{\int_\Omega (-\bar\omega)^k\ d\bP(\omega)\right\} \cup \{1,0\}.
     \end{equation}
     By definition, each non-zero value that occurs in \eqref{eq:U_n-spectrum}, is an eigenvalue with the algebraic multiplicity equal to the number of times of its occurrence. Again, by Remark~\ref{rem:diagonal}, we know that the above matrix is lower-triangular with each row either consisting of all zeros or exactly one non-zero entry (which is on the diagonal if it is the \(0k\)th row, that is the diagonal entry has the address \((0k,0k)\) and off-diagonal otherwise). Hence, every non-zero eigenvalue has its algebraic multiplicity equal to its geometric multiplicity. Indeed,  
     for the eigenvalue 1, both the geometric and algebraic multiplicity are 1. Let \(\lambda_k\in \left\{\int_\Omega (-\omega)^k\ d\bP(\omega)\right\} \cup \left\{\int_\Omega (-\bar\omega)^k\ d\bP(\omega)\right\}\) be a non-zero eigenvalue lying in the \(k\)th row, then the basis vector \(e_{0,k}\) serves as an eigenvector for \(\lambda_k\). If there exists \(l(\neq k)\in\bN\) such that \(\lambda_k=\lambda_l\neq 0\), then the corresponding eigenspace has dimension at least 2, because the vectors \(e_{0,k}\) and \(e_{0,l}\) both lie in the eigenspace, making the geometric and algebraic multiplicity equal. 

     Finally, it suffices to justify that the non-zero spectrum with the respective algebraic and geometric multiplicities, of the operator $\cU$, is captured by the non-zero spectra of the finite rank operators \(\cU_N\). This follows from a standard spectral approximation result (see, for example, \cite[XI.9.5]{DunSch63}) together with the fact that \(\cU_N\) converges to \(\cU\) in the operator norm on \(H_a\) (see Lemma~\ref{lem:finite-approx}), and that \(\cU\) is compact (see Remark \ref{rem:ann_compact}).
 \end{proof}

 \subsection{Resonances of a compact annealed operator associated to an Stochastic differential equations }\label{sec:sde}
In this subsection, we present an example numerically approximating the spectrum of a compact annealed transfer operator. This example has been explored in \cite{DGJ25} to numerically compute the optimal perturbation for a given observable. We use their example and the operator they have approximated to compute its spectrum.\\
Consider, for a noise intensity $\varepsilon>0$ and a final time $T>0$, the SDE given by
\begin{equation}
\begin{split}
    dY_t^x &= - V'(Y_t^x)dt + \varepsilon dW_t, \quad t \in (0,T)\\
    Y_0^x &= x,
\end{split}
\label{eq: SDE gradient type}
\end{equation}
where $V(y) = \frac{y^4}{4}- \frac{y^2}{2}$ is a symmetric double well potential. $(W_t)_t$ denotes a Brownian Motion (BM) and $x \in \mathbb{R}$ is a deterministic initial condition.

As in \cite{DGJ25}, for the final time \(T>0\), the kernel of the associated transfer operator is given by  $\kappa(x,y) := p^x(y,T)$ (which is actually the transition density of the SDE), the numerical approximation is displayed in Figure~\ref{subfig: kernel} and the transfer operator is given by 
\[
(\cL f)(y)=\int \kappa(x,y) f(x)\ dx. 
\]
Building on the code provided in \cite{DGJ25} for approximating the kernel and the matrix associated to the transfer operator, we compute the invariant density (see Figure~\ref{subfig: invarian density f_0}). Finally, we numerically approximate the eigenvalues of the matrix/Ruelle-Pollicott resonances of the system in \eqref{eq: SDE gradient type} as given in Figure~\ref{subfig:resonances}.\\
The MATLAB code used to generate the numerical results is publicly available on Zenodo at \url{https://doi.org/10.5281/zenodo.21816093}.

\begin{figure}\label{fig:SDE}
\begin{subfigure}[h]{0.30\linewidth}
\includegraphics[width=\linewidth]{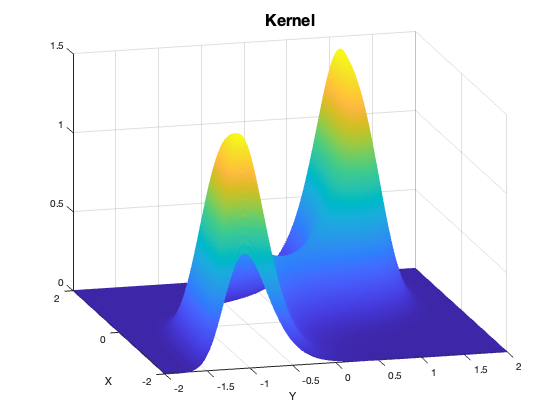}
\caption{Kernel $(x,y) \mapsto \kappa(x,y)$}
\label{subfig: kernel}
\end{subfigure}
\hfill
\begin{subfigure}[h]{0.30\linewidth}
\includegraphics[width=\linewidth]{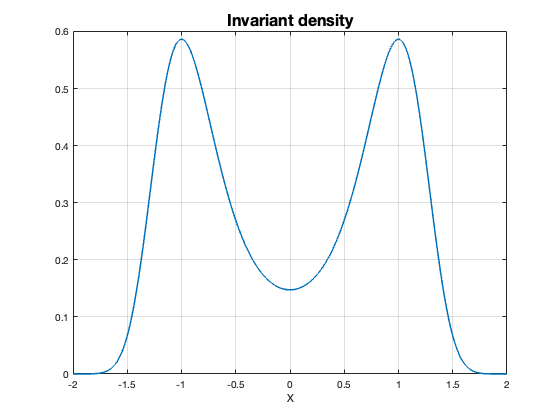}
\caption{Invariant density for $\mathcal{L}_0$}
\label{subfig: invarian density f_0}
\end{subfigure}%
\hfill
\centering
\begin{subfigure}[h]{0.30\linewidth}
    \includegraphics[width=\linewidth]{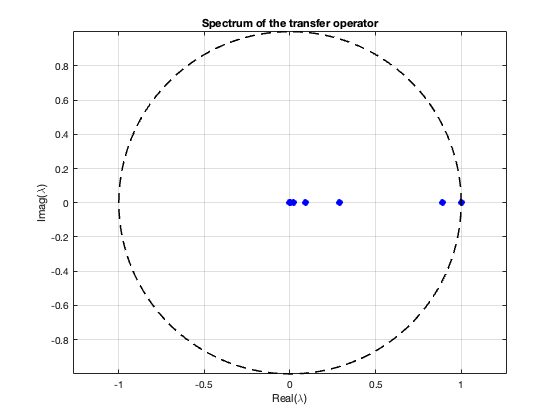}
    \caption{Ruelle-Pollicott resonances}
    \label{subfig:resonances}
\end{subfigure}
\caption{Numerical approximation for (a) kernel $\kappa(x,y)$, (b) invariant density for the transfer operator $\mathcal{L}_0$ and (c) The Ruelle-Pollicott resonances of the system. Here the final time is $T =1,$ the noise intensity is $\varepsilon = 0.6$, the domain is $\Omega = (-2,2)$, while the mesh sizes are $\Delta x = \Delta x = 2\cdot 10^{-3}$.}
\end{figure}

\appendix
\section{Proofs of Framework results from \ref{subseciidintro}.}\label{sec:classic}

We start with this Appendix with the following general definition of the transfer operator $\cL \in L(\cB)$ introduced in equation \eqref{eqoptrans} whenever $\cB\subset L^1(m)$ is a Banach space.

\begin{proposition}\label{prop:integomeg}
Let $\cB\subset L^1(m)$ be a Banach space and assume that $\omega \in \Omega \mapsto L_\omega \in L(\cB)$ is measurable and that $\int \|L_\omega\|_\cB d\bP(\omega)<\infty$. Let $f\in \cB$ then for any measurable set $A \subset \Omega$, there is an element $e_A(f)\in \cB$ with $\|e_A\|_\cB\leq \int_A \|L_\omega\|_\cB d\bP(\omega)$ such that for any $g\in \cB^*$,
\begin{align}\label{eq:weakint}
\langle e_A(f),g \rangle_{\cB,\cB^*} =\int_A \langle  L_\omega f,g \rangle_{\cB,\cB^*} d\bP(\omega).
\end{align}

Furthermore $\Psi : f\in \cB \mapsto e_A(f)\in \cB$ is a linear operator bounded from above by
\begin{align}\label{eq:conL}
    \|\Psi\|_{L(\cB)}\leq \int_A\|L_\omega\|_{L(\cB)}d\bP(\omega)
\end{align}

\end{proposition}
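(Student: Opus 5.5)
The plan is to build $e_A(f)$ as a Bochner integral in $\cB$. Reflexivity is no longer available, so we cannot simply define the vector by duality. Instead we approximate $\omega\mapsto L_\omega$ by simple operator-valued functions and pass to the limit in $\cB$. Fix $f\in\cB$ and a measurable $A\subset\Omega$. Evaluation $T\mapsto Tf$ is continuous from $(L(\cB),\norm{\cdot}_{L(\cB)})$ to $\cB$. Hence $\omega\mapsto L_\omega f$ is Borel measurable, and $\norm{L_\omega f}_\cB\le \norm{L_\omega}_{L(\cB)}\norm{f}_\cB$ is integrable.

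\textbf{Step 1 (simple approximations).} Let $\nu:=(\omega\mapsto L_\omega)_*\bP$ be the image measure on $L(\cB)$. I will use that $\nu$ is concentrated on a separable subset, i.e. that $\omega\mapsto L_\omega$ is essentially separably valued, so that $\nu$ is tight. This is automatic when $L(\cB)$-valued measurability comes from a separable setting such as Step 3 of Proposition \ref{prop:measop}; otherwise it is the standard strong-measurability hypothesis of Pettis' theorem, which I would add explicitly. Given $\ve>0$, choose a compact $K_\ve\subset L(\cB)$ with
\[
\int_{\{L_\omega\notin K_\ve\}}\norm{L_\omega}\,d\bP\le\ve .
\]
Cover $K_\ve$ by finitely many balls of radius $\ve$, disjointify them, and let $F_i\subset A$ be the corresponding preimages, each with a chosen centre $L_i$. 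Define
\[
S_\ve:=\sum_i L_i 1_{F_i},\qquad e_A^\ve(f):=\sum_i L_i f\,\bP(F_i)\in\cB .
\]
Then
\begin{equation}\label{eq:intosum}
\int_A \norm{L_\omega - S_\ve(\omega)}_{L(\cB)}\,d\bP(\omega)\le \ve + 2\ve ,
\end{equation}
which is the relation the paper later calls \eqref{eq:intosum}.

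\textbf{Step 2 (Cauchy and limit).} From \eqref{eq:intosum}, $\norm{e_A^\ve(f)-e_A^{\ve'}(f)}_\cB\le 3(\ve+\ve')\norm{f}_\cB$. So $(e_A^\ve(f))_\ve$ is Cauchy in $\cB$ and converges to some $e_A(f)$; the limit does not depend on the choices of $K_\ve$ and of the partitions. For $g\in\cB^*$, the identity $\langle e_A^\ve(f),g\rangle=\int_A\langle S_\ve(\omega) f,g\rangle\,d\bP$ holds trivially. Passing to the limit, using $|\langle (L_\omega-S_\ve)f,g\rangle|\le \norm{L_\omega-S_\ve}\norm{f}\norm{g}$ together with \eqref{eq:intosum}, gives \eqref{eq:weakint}. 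Since $\cB^*$ separates points, \eqref{eq:weakint} determines $e_A(f)$ uniquely.

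\textbf{Step 3 (linearity and bounds).} Linearity of $f\mapsto e_A(f)$ follows from uniqueness in \eqref{eq:weakint}, because the right-hand side is linear in $f$. For the bound, write $\norm{e_A(f)}=\sup_{\norm g\le1}|\langle e_A(f),g\rangle|$ and estimate
\[
|\langle e_A(f),g\rangle|\le\int_A\norm{L_\omega}\norm f\norm g\,d\bP .
\]
This gives \eqref{eq:conL}; the same estimate gives $\norm{e_A(f)}\le \norm f\int_A\norm{L_\omega}d\bP$.

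The main obstacle is Step 1. Measurability with respect to the norm-Borel $\sigma$-algebra of the nonseparable space $L(\cB)$ does not by itself give approximation by simple functions. I would first try to derive essential separability from the paper's setting (separable $\cB$ with a dense countable family, as in Lemma \ref{lemmesurabl}). Failing that, I would state it as the implicit standing assumption.
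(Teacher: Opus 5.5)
Your construction is the paper's own. You push $\bP$ forward to $L(\cB)$, choose a compact $K_\ve$ carrying all but $\ve$ of $\int\|L_\omega\|\,d\bP$, cut it into small balls, and pass to the limit in the sums $\sum_i L_i f\,\bP(F_i)$. The paper takes that limit in $\cB^{**}$ and uses that $J(\cB)$ is closed, whereas you show the sums are Cauchy in $\cB$. That difference is cosmetic, and under your added hypothesis Steps 1--3 are correct. The real gap is the one you flag, namely the existence of $K_\ve$, and the paper has it too: it asserts the compact sets $\tilde K_n$ without justification.

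Two corrections to how you propose to handle it.

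First, essential separability is \emph{not} automatic in the setting of Proposition~\ref{prop:measop} and Lemma~\ref{lemmesurabl}. Measurability of $\omega\mapsto L_\omega f_i$ on a countable dense family of a separable $\cB$ says nothing about separability in operator norm. Translations $\tau_t h=h(\cdot+t)$ on $L^1(\bR)$ satisfy $\|\tau_t-\tau_s\|=2$ for $t\neq s$ (test on a bump narrower than $|t-s|$). The affine reparametrisations $h\mapsto h\circ g_{\omega,j}$ in \eqref{eq:transfer_operator} behave the same way on $W_r$, so a diffuse family of maps typically has non-separable range in $L(W_r)$.

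Second, for this statement you never need to approximate the operator-valued map. Since $f$ is fixed, run your Steps 1--2 in $\cB$ rather than in $L(\cB)$. The map $\omega\mapsto L_\omega f$ is measurable. When $\cB$ is separable (as $W^{r,1}(\tilde\cI)$ and $H_a$ are), it is separably valued, hence strongly measurable by Pettis' theorem. It is Bochner integrable since $\|L_\omega f\|\le\|L_\omega\|\,\|f\|$. Put $e_A(f):=\int_A L_\omega f\,d\bP$. Then \eqref{eq:weakint} holds because bounded functionals commute with Bochner integrals, and your Step 3 gives linearity and \eqref{eq:conL} unchanged.

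This route needs only strong measurability, with $\omega\mapsto\|L_\omega\|$ measurable as a countable supremum over a dense family of the unit sphere. That is what one can actually verify in the applications. So the hypothesis worth adding is separability of $\cB$ (or of the range of $\omega\mapsto L_\omega f$), not separability of $\{L_\omega\}$ in $L(\cB)$. The operator-norm approximation \eqref{eq:intosum} is only what the paper relies on later, in Proposition~\ref{prop:gencomp} and Lemma~\ref{lem:to_deriv}.
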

\begin{remark}
    That element $e_A$ is denoted by $e_A=\int_A L_\omega d\bP(\omega)$, represents what we formally introduced in subsection \ref{subseciidintro} and satisfies the same classical properties of additivity and positiveness of integrals.\\
    An example of invariant Banach space $\cB$ invariant for $\cL$ is given by $\cB:=L^\infty(m)$ whenever $L^1(m)$ is invariant by each $U_\omega$.
\end{remark}

\begin{proof}
Let $J : e\in \cB\mapsto \langle e,\cdot\rangle_{\cB,\cB^*}\in \cB^{**}$ be the standard isometry and let $\bP^A$ be the conditional probability with respect to $A$. The random variable $Y : \omega\in \Omega \mapsto L_\omega$ generates a push forward probability law $\bP^A_Y$ over $\cB^{**}$. Furthermore, 
for $n\in \bN$, there is a compact set $\tilde K_n$ such that $\bP^A_Y(\tilde K_n)>1-\frac 1 n$. For any $\ve>0$, using dominated convergence, one can prove that there is a compact set $K_\ve$ such that for any $g\in \cB^*$, 
\begin{align}\label{eq:intcompo}
    \int_{A\cap (Y^{-1}(K_\ve))^c} \|L_\omega f\|_{\cB}d\bP(\omega)\leq \ve.
\end{align}
\footnote{where \(M^c\) denotes the complement of the set \(M\).}Thus, for a small enough $\eta_\ve>0$, taking a partition $(F_i^{\ve})_{i\in \bN}$ from a finite covering of the compact $K_\ve$ by balls of radius $\eta_\ve$ in $(L(\cB),\norm{\cdot}_{L(\cB)})$, one has for any $g\in \cB^*$, 
\begin{align}\label{eq:intosumlinform}
    &\left|\int_A\langle  L_\omega f,g \rangle_{\cB,\cB^*}\ d\bP -\left(\int_{Y^{-1}(K_\ve^c)}\langle  L_\omega f,g \rangle_{\cB,\cB^*} d\bP+\bP(A)\sum_i \langle  L_i f,g \rangle_{\cB,\cB^*} \bP^A_Y(F_i^{\ve})\right)\right| \nonumber\\
    &\leq \|g\|_{\cB^*}\bP(A)\sum_i \int_{F_i^{\ve} } \left\|L_\omega-L_i\right\|_{\cB}d\bP^A_Y(\omega) \norm{f}_{\cB}\nonumber\\
    &\leq \ve \|g\|_{\cB^*}\norm{f}_{\cB} .
\end{align}

Thus choosing a sequence of terms $\ve_n>0$ such that $\ve_n \underset{n\to \infty}{\to}0$, one has from equation \eqref{eq:intosumlinform} and \eqref{eq:intcompo},

\begin{align*}
    \left\| \int_{A} \langle  L_\omega f,\cdot \rangle_{\cB,\cB^*} d\bP(\omega)-\langle \bP(A)\sum_i \bP^A_Y(F_i^{\ve_n})  L_i f,\cdot \rangle_{\cB,\cB^*}  \right\|_{\cB^**}\to 0. 
\end{align*}
In other words the sequence $\left( \bP(A)\sum_i \bP^A_Y(F_i^{\ve_n})  L_i f\right)_{n\in \bN}\in J(\cB^*)^{\bN}$ converges to $\int_{A} \langle  L_\omega f,\cdot \rangle_{\cB,\cB^*} d\bP(\omega)$ and since $J(\cB)$ is Banach space, the latter linear form admits some Riesz representation in $\cB$. That representation is the element $\int_{A}   L_\omega fd\bP(\omega) \in \cB$. The linearity of $f\mapsto e_A(f)$ is a natural consequence of the relation \eqref{eq:weakint} that defines it.

The last point \eqref{eq:conL}, follows from the previous asserted relation \eqref{eq:weakint} :
\begin{align*}
    \left\|\int_AL_\omega f d\bP(\omega)\right\|_{\cB}&=\left\|\langle \int_AL_\omega f d\bP(\omega),\cdot\rangle_{\cB,\cB^*}\right\|_{\cB^*}\\
    &=\left\|\int_A \langle L_\omega f,\cdot\rangle_{\cB,\cB^*} d\bP(\omega)\right\|_{\cB^*}\\
    &\leq \int_A\left\| L_\omega  f \right\|_{\cB} d\bP(\omega)\\
    &\leq \int_A\left\| L_\omega \right\|_{L(\cB)}d\bP(\omega)\left\| f \right\|_{\cB}.
\end{align*}

\end{proof}
The following is an intermediate lemma useful for establishing the measurability of the map \(\omega\mapsto L_\omega\). 
\begin{lemma}\label{lemmesurabl}
Let $\cB$ be a separable Banach space and suppose that there is a dense family $\{f_i\in \cB,i\in \bN\}$ such that for any $f_i \in \cB$, $\omega \mapsto L_\omega f_i \in \cB$ is measurable in $\cB$, then $\omega \mapsto L_\omega(\cdot)\in L(\cB)$ is measurable.
\end{lemma}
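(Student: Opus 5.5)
The plan is to reduce measurability of $\omega\mapsto L_\omega\in L(\cB)$ to the measurability of countably many real-valued functions, using the density of $\{f_i\}$ to compute operator norms by a countable supremum. I will first fix an arbitrary $A\in L(\cB)$ and show that $\omega\mapsto \norm{L_\omega-A}_{L(\cB)}$ is measurable. Then I will deduce that preimages of open balls $B(A,\rho)$ of $L(\cB)$ are measurable, and finally pass from balls to general open sets.

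For the first step I discard $f_i=0$ from the family, which does not affect density of the remaining family in $\cB\setminus\{0\}$. Since $L_\omega-A$ is bounded, hence continuous, and $\{f_i\}$ is dense, I will write
\[
\norm{L_\omega-A}_{L(\cB)}=\sup_{i\in\bN}\frac{\norm{L_\omega f_i-Af_i}_\cB}{\norm{f_i}_\cB}.
\]
This identity holds because, for any $f$ with $\norm f=1$, there are $f_{i_k}\to f$, and the quotients converge to $\norm{(L_\omega-A)f}$. For each fixed $i$, the map $\omega\mapsto L_\omega f_i$ is measurable into $\cB$ by hypothesis. Composing with the continuous map $x\mapsto \norm{x-Af_i}_\cB/\norm{f_i}_\cB$ gives a measurable real function, and a countable supremum of measurable functions is measurable. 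Hence $\{\omega:\norm{L_\omega-A}<\rho\}$ is measurable for every $A$ and every $\rho>0$.

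The last step, which I expect to be the main obstacle, is passing from balls to arbitrary Borel sets. The difficulty is that $(L(\cB),\norm{\cdot})$ is in general not separable even when $\cB$ is, so open sets need not be countable unions of balls. I would handle this in one of two ways. First, I would interpret the claimed measurability with respect to the $\sigma$-algebra generated by open balls, which is what is actually used in Proposition \ref{prop:integomeg} and in Step 3 of Proposition \ref{prop:measop} through the functions $\omega\mapsto\norm{L_\omega-L_i}$. Alternatively, if the range $\{L_\omega\}$ is known to be contained, up to a null set, in a separable closed subset $S\subset L(\cB)$, as happens for instance in the tightness argument of Proposition \ref{prop:integomeg}, then every relatively open subset of $S$ is a countable union of balls centred at points of a countable dense subset of $S$. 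In that case the Borel sets of $S$ are generated by such balls, and the ball computation above gives full Borel measurability. I would state the lemma's conclusion in whichever of these forms the later applications require.
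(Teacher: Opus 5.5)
Your proof is correct, and its core is the paper's own argument. The paper writes $\Psi^{-1}(B_{L(\cB)}(0,\ve))=\bigcup_n\bigcap_i\psi_{f_i}^{-1}(B_\cB(0,\ve_n))$, i.e.\ it too computes the operator norm as a countable supremum over the dense family. Your version is the more careful one: you treat an arbitrary centre $A$ and you normalise by $\norm{f_i}_\cB$. The paper only writes out balls centred at $0$, and its identity holds only if the $f_i$ are taken dense in the unit ball.

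The obstacle you flag is real, and it is exactly the step the paper leaves unjustified. The paper passes from ``every open ball has measurable preimage'' directly to measurability for the Borel $\sigma$-algebra of $(L(\cB),\norm{\cdot}_{L(\cB)})$, and that conclusion is false under the lemma's hypotheses. Take $\cB=L^2(\bT)$, $\Omega=[0,1)$ with Lebesgue measure, and $L_\omega f=f(\cdot-\omega)$. Every map $\omega\mapsto L_\omega f$ is continuous, yet $\norm{L_\omega-L_{\omega'}}=\sup_{n\in\bZ}|e^{2\pi i n(\omega-\omega')}-1|\geq\sqrt3$ for $\omega\neq\omega'$. Hence $\{L_\omega:\omega\in S\}$ is norm-closed for every $S\subset[0,1)$, while its preimage $S$ may be non-measurable. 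So one of your two reformulations is genuinely needed, not merely convenient.

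Be precise about which one the later results use. The ball-$\sigma$-algebra version, together with measurability of $\omega\mapsto L_\omega f$ for every $f\in\cB$ (a pointwise limit of $L_\omega f_{i_k}$), is enough to define $\int L_\omega f\,d\bP$ as a Bochner integral in the separable space $\cB$, with bound $\int\norm{L_\omega}d\bP$. However, the proofs of Propositions~\ref{prop:integomeg} and \ref{prop:gencomp} choose a compact $K_\ve\subset L(\cB)$ with $\bP(L_\omega\in K_\ve)\geq 1-\ve$. That requires your second version (essentially separable range), and it must be assumed rather than derived: in the example above, every compact set meets the image in finitely many points and so has a preimage of measure zero.
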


\begin{proof}
   Let $\ve>0$ and $(\ve_n)_{n\in \bN}$ an increasing sequence such that $\ve_n\to \ve$. We first prove that $\{\omega: L_\omega\in B_{L(\cB)}(0,\ve) \}$ is measurable, for any ball $B_{L(\cB)}(0,\ve)$ of size at most $\ve$ for the operator norm on $L(\cB)$.
   Let $\Psi : \omega \mapsto ((f\mapsto L_\omega f) \in L(\cB))$. Recall that by assumption, for any $f_i$, $\psi_{f_i}: \omega \mapsto L_\omega f_i \in \cB$ is measurable and since $\cB$ is separable, $\Psi^{-1}(B_{L(\cB)}(0,\ve))=\bigcup_{n\in \bN}\bigcap_{i\in \bN}(\psi_{f_i}^{-1}(B_{\cB}(0,\ve_n)))$ is measurable. Thus, any open ball in the operator norm topology are measurable and the map $\Psi$ is a measurable map on $L(\cB)$ equipped with the Borel sigma-algebra of $L(\cB)$, that is $\omega \mapsto L_\omega \in L(\cB)$ is measurable.
\end{proof}

We introduce the following Lemma linking the spectral properties of Transfer operators with decorrelation formulae :

\begin{lemma}\label{lem:equivalence}
    Let $\cL\in L(\cB)$ be a quasi-compact operator on $\cB$ such that $\sigma(\cL)\backslash B(0,\ve)=\{\lambda_1,\dots,\lambda_M\}$ where the $\lambda_i$ are of finite multiplicities (geometric, in other words $\sigma_{ess}(\cL) \subset B(0,\ve)$), 
then for $i\leq  M$ there are quantities $m_i\in \bN$, the geometric multiplicity of $\lambda_i$, such that for any  $k\leq m_i$, there are bilinear map $C'_{i,k} : \cB\times \cB\mapsto \mathbb{C}$ such that

\begin{align}
    \int_I \cL^n\phi \varphi dm =\ \sum\limits_{i=1}^{ M}\sum\limits_{k=0}^{m_i-1}\lambda_i^n n^k C'_{i,k}(\phi,\varphi) + o(\ve^n)
\end{align}
\end{lemma}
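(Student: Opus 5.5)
We prove Lemma~\ref{lem:equivalence} through the Riesz spectral decomposition of the quasi-compact operator $\cL$. We assume, as the paper implicitly does, that the pairing $(\phi,\varphi)\mapsto\int_I\phi\varphi\,dm$ is a bounded bilinear form on $\cB\times\cC$. This holds because $\cB\subset L^1(m)$ continuously and $\varphi$ is bounded.

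\textbf{Step 1: spectral projectors.} Since $\sigma(\cL)\setminus B(0,\ve)=\{\lambda_1,\dots,\lambda_M\}$ is a finite set of isolated eigenvalues of finite multiplicity, we take for each $i$ a small circle $\gamma_i$ around $\lambda_i$ that avoids the rest of the spectrum. We then define the Riesz projector
\[
P_i:=\frac{1}{2\pi \iota}\oint_{\gamma_i}(z-\cL)^{-1}dz .
\]
The $P_i$ are commuting idempotents that commute with $\cL$. Each has finite rank, its range being the generalized eigenspace $E(\lambda_i)$. Setting $P:=\sum_i P_i$ and $N:=\cL(\mathrm{Id}-P)$, we get $\cL=\sum_i\cL P_i+N$, with $P_iN=NP_i=0$.

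\textbf{Step 2: the finite-rank part.} On the finite-dimensional space $E(\lambda_i)$ we write $\cL P_i=\lambda_iP_i+D_i$, where $D_i$ is nilpotent with $D_i^{m_i}=0$. Here $m_i$ is the size of the largest Jordan block, which is at most the multiplicity of $\lambda_i$; the statement calls it the geometric multiplicity. The binomial formula then gives
\[
\cL^nP_i=\sum_{k=0}^{m_i-1}\binom{n}{k}\lambda_i^{n-k}D_i^k .
\]
Expanding $\binom nk$ as a polynomial in $n$ of degree $k$ and regrouping by powers $n^k$, we obtain finite-rank operators $Q_{i,k}$ with $\cL^nP_i=\sum_{k<m_i}\lambda_i^nn^kQ_{i,k}$. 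We then define $C'_{i,k}(\phi,\varphi):=\int_I Q_{i,k}\phi\cdot\varphi\,dm$. These maps are bilinear, bounded and of finite rank. If $\lambda_i=0$ could occur this would need care, but it cannot, since $|\lambda_i|\ge\ve>0$.

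\textbf{Step 3: the remainder.} We need to show $\int_I N^n\phi\,\varphi\,dm=o(\ve^n)$. The spectrum of $N$ restricted to $(\mathrm{Id}-P)\cB$ is $\sigma(\cL)\cap B(0,\ve)$, together with possibly $0$. The main obstacle is that $B(0,\ve)$ might be the open ball, so that the spectral radius $\rho(N)$ could equal $\ve$ exactly, giving only $O((\ve+\delta)^n)$. We handle this by reading the hypothesis as $\sigma(\cL)\cap\overline{B(0,\ve)}^c$ being finite. Since the spectrum outside the essential radius is discrete, the set of spectral values of modulus at least $\ve'$ is finite for every $\ve'>\rho_{ess}$. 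We can therefore pick $\ve'<\ve$ with no spectrum in the annulus $\ve'\le|z|<\ve$, move any eigenvalues of modulus exactly $\ve$ into the list (this only adds finitely many terms), and apply Gelfand's formula $\rho(N)=\lim\|N^n\|^{1/n}\le\ve'$. This gives $\|N^n\|\le C(\ve'')^n$ for some $\ve'<\ve''<\ve$, which is $o(\ve^n)$. Pairing with $\varphi$ and summing the three steps yields the claimed expansion.
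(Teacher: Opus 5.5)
Your proposal is correct and follows the paper's proof essentially step for step. Both arguments use spectral projectors onto the finitely many eigenvalues outside $B(0,\ve)$, a decomposition $\cL\Pi_i=\lambda_i(\Pi_i+Q_i)$ with $Q_i$ nilpotent on each finite-rank piece, the binomial expansion regrouped in powers of $n$, and a remainder carried by the operator $N$ that holds the rest of the spectrum. Your version is slightly more careful than the paper's on two points. First, you take $m_i$ to be the nilpotency index (the size of the largest Jordan block), which is what the expansion actually needs; the geometric multiplicity the statement names is not enough. Second, you secure $\rho(N)<\ve$, so the remainder is genuinely $o(\ve^n)$, whereas the paper's bound $\|N_\ve^n\|\le C\ve^n$ only gives $O(\ve^n)$. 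One remark on that second point: when $B(0,\ve)$ is the open ball, the strict bound is immediate. In that case $\sigma_{ess}(\cL)\subset B(0,\ve)$ forces the $\lambda_i$ to be isolated, so $\sigma(\cL)\setminus\{\lambda_1,\dots,\lambda_M\}$ is a compact subset of the open ball. The obstacle you describe therefore only arises under the closed-ball reading.
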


\begin{proof}[Proof of Lemma~\ref{lem:equivalence}]

Since the
transfer operator is quasi-compact such that there is $M>0$ and spectral projectors $N_\ve, \Pi_1,\dots,\Pi_M\in L(B)$ 
 \begin{align}\label{eqquasicompdeco}
\cL= \sum_{i=1}^{M}\cL\circ\Pi_i +N_\ve    
\end{align}
    with $\norm{N_\ve^n}_{L(\cB)}\leq C\ve^n$ and $ \Pi_i$ are finite rank spectral projectors associated to the eigenvalue $\lambda_i$ orthogonal to $N_\ve$ and to one another, that is, \(\Pi_i \circ \Pi_j=0\).

Since the operators $\cL\circ \Pi_i$ are of finite rank, it acts on a finite dimensional subspace, and therefore can be decomposed into a triagonal Jordan form, that is  $\cL\circ\Pi_i:=\lambda_i\left( \Pi_i+Q_i\right)$ with $m_i$ being the geometric multiplicity of $\lambda_i$, $Q_i$ a nilpotent operator with $Q_i^{m_i}=0$ and $\Pi_i$ match with the diagonal part, in particular projector commutes with $Q_i$ : $ \Pi_i\circ Q_i=Q_i\circ \Pi_i=Q_i$. Thus, 
    \begin{align}\label{eqquasicompact}
\cL= \sum_{i=1}^{M}\lambda_i(  \Pi_i+Q_i) +N_\ve    
\end{align}

where $\norm{N_\ve^n }=O(\ve^n)$ an operator orthogonal to  $ \Pi_i$ and $Q_i$, and $ \Pi_kQ_i=Q_i \Pi_k=\delta_{i,k}Q_i$.
In particular,
$$
\cL^n=\sum_{i=1}^{M}\lambda^n_i (\Pi_i+Q_i)^n +N_\ve^n=\sum_{i=1}^{M}\sum_{ 0\leq k \leq m_i} {n \choose k}\lambda_i^{n} Q_i^{k}\circ \Pi_i +N_\ve^n=\sum_{i=1}^{M}\sum_{0\leq k \leq m_i-1}n^k\lambda_i^{n} \tilde C_{i,k}Q_i^{k}\circ \Pi_i +N_\ve^n
$$

 We thus obtain
\begin{align}
    \int \phi \cU^nf dm &=\int \cL^n\phi f dm\\
    &=\int f\sum_{i=1}^{M}\sum_{1\leq k \leq m_i-1}n^k\lambda_i^{n} \tilde C_{i,k}Q_i^{k}\phi dm +\int fN_\ve^n\phi dm\\
    &=\sum_{i=1}^{M}\sum_{1\leq k \leq m_i-1}\lambda_i^nn^k C_{k,i}(\phi,f)+o(\eps^n),
\end{align}
with $C_{i,k}(\phi,f):=\int f \tilde C_{i,k}Q_i^{k}\phi dm $
\end{proof}

\begin{proof}[Proof of Proposition~\ref{prop:gencomp}]
Let $Y: \omega \in \Omega \mapsto L_\omega \in L(\cB)$, according to Lemma \ref{lemmesurabl}, $Y$ is measurable, thus $\mu:=Y*\bP$ is a probability measure on $L(\cB)$. Then for any $\ve>0$ there is a compact set $K_\ve$ such that $\mu(K_\ve)\geq 1-\ve$ and $\int_{Y^{-1}(K_\ve^c)} \norm{L_\omega}\ d\bP(\omega)\leq \ve$. Thus for a small enough $\eta_\ve>0$, taking a covering of $K_\ve$ by balls $F_i$ of radius $\eta_\ve$ in $(L(\cB),\norm{\cdot}_{L(\cB)})$, we deduce from the inequality \eqref{eq:conL}, 
\begin{align}\label{eq:intosum}
    \norm{\int L_\omega (\cdot)\ d\bP -\left(\int_{Y^{-1}(K_\ve^c)}L_\omega (\cdot)\ d\bP+\sum_i L_i(\cdot) \mu(F_i)\right)}_{L(\cB)} \leq \ve.
\end{align}
where we pick for $L_i$ one of the $L_\omega$ (one belonging to the same set $F_i$), the sum $\sum_i L_i \mu(F_i)$ is a compact operator being a finite sum of compact operators and finally $\int L_\omega(\cdot)\ d\bP$ is a compact operator being a limit of compact operators (by closure of the space of compact operators).
\end{proof}

\begin{proof}[Proof of Proposition~\ref{propgenquasi}]
    We know from Proposition \ref{prop:gencomp} that $\int \Pi_{\omega,r}d\bP(\omega)$ is a compact operator and thus 
    \begin{align*}
\norm{\cL-\int \Pi_{\omega,r}d\bP(\omega)}_{W_r}&\leq \int\norm{N_{\omega,r}}_{W_r}d\bP(\omega)\\
&\leq \int \ve_r(\omega)d\bP(\omega).
    \end{align*}
Since $\bP(\{\omega,\lim_{r\to \infty}\ve_r(\omega)=0\})=1$ and $\ve_r(\omega)< 1$, for $\ve>0$, there is $r_0>0$ such that for $r\geq r_0$, $\int \ve_r(\omega)d\bP(\omega)\leq \ve$. Thus the essential spectral radius $r_{ess}(\cL)$ satisfies $r_{ess}(\cL)\leq \ve$ which ensures that the operator is quasi compact with the following decomposition
\begin{align}\label{eq:decospecgap}
     \cL=\Pi_{r}+N_{r}
\end{align}
    
as required.\\

Thanks to Lemma \ref{lem:equivalence}, \eqref{eq:decospecgap} implies that the decorrelation property holds for any $r\geq r_0$. That is, for $\lambda_1>\dots>\lambda_n>\dots$ the eigenvalues\footnote{the enumeration is made possible by the fact that for any $\ve>0$ the eigenvalues of $\cL$ on $\bigcap_rW_r$ such that $\lambda>\ve$ belongs to the finite family of eigenvalues for $P$ on $W_r$ that are above $\ve_r>0$ for some $r$ such that $\ve_r\leq \ve$.} of $\cL$ over $\bigcap_rW_r$ such that $\lambda_i>\ve_{r_0}$ there are bilinear form $C^r_{i,k} :\bigcap_{r>0}W_r\times \bigcap_{r>0}W_r\mapsto \bC$ and integers $(m_i)_{i\geq 0}$ such that
\begin{align}
    \int_I \cL^n\phi \varphi dm =\ \sum\limits_{i=1}^{ M}\sum\limits_{k=0}^{m_i-1}\lambda_i^n n^k C^{r}_{i,k}(\phi,\varphi) + o(\ve^n).
\end{align}
To get the \textbf{full Ruelle-Pollicott resonances spectrum}, it remains to show that the bilinear map  $C^r_{i,k}$ actually do not depend on the choice of $r\in \bN$. To do so, recall that, according to the proof of Lemma \ref{lem:equivalence}, $C^r_{i,k}$ derives from the operator $\cL\circ \Pi_r(\lambda_i)$ where  $\Pi_r(\lambda_i)$ is  the spectral projector associated to the general eigenspace $E_r(\lambda_i)$. Thus it is enough to show that for any $f\in \bigcap_{r>0}W_r$,
\begin{align}
    \cL\circ \Pi_r(\lambda_i)(f)=\cL\circ \Pi_{r_0}(\lambda_i)(f),\, \forall\ r\geq r_0.
\end{align}
For the sake of completeness, we rigorously check that $\Pi_r(\lambda_i)=\Pi_{r_0}(\lambda_i)$ over $\bigcap_{r>0}W_r$.
Recall that (see \cite{DunSch63}) for $r\geq r_0$,
\begin{align*}
    \Pi_r(\lambda_i)=\frac 1 {2i\pi}\int_{\Gamma} f(z)(z-\cL_{|W_r})^{-1}dz
\end{align*}
for some closed curve $\Gamma \subset \bC \backslash \left\{B(0,\eps_r)\cup_{i\geq 0}\{\lambda_i\}\right\}$ encompassing $\lambda_i$. 
Since any $(z-\cL_{|W_r})^{-1}$ extends uniquely continuously into $(z-\cL_{|W_{r_0}})^{-1}$ on $W_{r_0}$, then $\Pi_{r}(\lambda_i)$ extends continuously to $\Pi_{r_0}(\lambda_i)$ over $(W_{r_0},\|\cdot\|_{r})$. Thus both projectors coincide over $\bigcap_rW_r$ and so does the bilinear form $C^{r}_{i,k}$ and $C^{r_0}_{i,k}$. Thus there are eigenvalues $\lambda_1>\dots>\lambda_n>\dots$ of $\cL_{|\bigcap_{r\geq 0}W_r}$, quantities $m_i\in \bN$ corresponding to the geometric multiplicity of $\lambda_i$ such that for any  $k\leq m_i$ and bilinear map $C'_{i,k} : \cB\times \cB\mapsto \mathbb{C}$ such that for any $\eps>0$,

\begin{align}
    \int_I \cL^n\phi \varphi dm =\ \sum_{i,\lambda_i>\ve}\sum\limits_{k=0}^{m_i-1}\lambda_i^n n^k C'_{i,k}(\phi,\varphi) + o(\ve^n),\; \forall \phi,\psi\in \bigcap_rW_r.
\end{align}
\end{proof}

{\bf Data Availability.} {\small No datasets were generated or analyzed during the current study. Code and numerical simulation used for the illustrative example is available at  \url{ https://doi.org/10.5281/zenodo.21816093}.}

{\bf Acknowledgments.} {\small The authors would like to express their gratitude to Carlangelo Liverani for an insightful initial discussion and to Françoise Pène for her constructive suggestions.
Part of this project was conducted while SJ was affiliated with Monash University whose support is gratefully acknowledged. An ARC Laureate Fellowship FL230100088 fully supported the research of MP.}

\clearpage

\bibliographystyle{plain}
 \bibliography{Biblio/biblio}

\end{document}